\newtheorem{theorem}{Theorem}
\newtheorem{corollary}[theorem]{Corollary}
\newtheorem{problem}[theorem]{Problem}
\newtheorem{proposition}[theorem]{Proposition}
\newtheorem{remark}[theorem]{Remark}
\newcommand{\RR}{{\mathbb R}}
\newcommand{\CC}{{\mathbb C}}
\newcommand{\DD}{{\mathbb D}}
\renewcommand{\SS}{{\mathbb S}}
\newcommand{\NN}{{\mathbb N}}
\newcommand{\TTT}{{\mathcal T}}
\newcommand{\GGG}{{\mathcal G}}
\newcommand{\III}{{\mathcal I}}
\newcommand{\MMM}{{\mathcal M}}
\newcommand{\PPP}{{\mathcal P}}
\newcommand{\SSS}{{\mathcal S}}
\newcommand{\DDD}{{\mathcal D}}
\newcommand{\HHH}{{\mathcal H}}
\newcommand{\BBB}{{\mathcal B}}
\newcommand{\EEE}{{\mathcal E}}
\newcommand{\capacity}{{\mbox{Cap}}}
\begin{document}

\title{The Dirichlet Space: A Survey}

\author[N. Arcozzi]{Nicola Arcozzi}
\address{N. Arcozzi, Dipartimento do Matematica\\ Universita di Bologna\\ 40127 Bologna, ITALY}
\email{arcozzi@dm.unibo.it}
\author[R. Rochberg]{Richard Rochberg}
\address{R. Rochberg, Department of Mathematics\\
Washington University\\
St. Louis, MO 63130, U.S.A}
\email{rr@math.wustl.edu}
\author[E. T. Sawyer]{Eric T. Sawyer}
\address{E. T. Sawyer, Department of Mathematics \& Statistics\\
McMaster University\\
Hamilton, Ontairo, L8S 4K1, CANADA}
\email{sawyer@mcmaster.ca}
\author[B. D. Wick]{Brett D. Wick}
\address{B. D. Wick, School of Mathematics, Georgian Institute of Technology, 686 Cherry\\
Street, Atlanta, GA USA 30332--0160}
\email{wick@math.gatech.edu}
\thanks{N.A.'s work partially supported by the COFIN project Analisi Armonica, funded
by the Italian Minister for Research}
\thanks{R.R.'s work supported by the National Science Foundation under Grant No. 0700238}
\thanks{E.S.'s work supported by the National Science and Engineering Council of Canada.}
\thanks{B.W.'s work supported by the National Science Foundation under Grant No. 1001098}

\begin{abstract}
In this paper we survey many results on the Dirichlet space of analytic functions.  Our focus is more on the classical Dirichlet space on the disc and \textit{not} the potential generalizations to other domains or several variables.  Additionally, we focus mainly on certain function theoretic properties of the Dirichlet space and omit covering the interesting connections between this space and operator theory.  The results discussed in this survey show what is known about the Dirichlet space and compares it with the related results for the Hardy space.
\end{abstract}

\maketitle

\tableofcontents

\section{Introduction} 

{\footnotesize
\noindent\bf Notation. \rm The unit disc will be denoted by $\DD=\{z\in\CC:\ |z|<1\}$ and the
unit circle by $\SS=\partial\DD$. If $\Omega$ is open in $\CC$, $H(\Omega)$ is the space of the 
functions which are holomorphic in $\Omega$.
A function $\varphi:\SS\to\CC$ is identified with a function defined on $[0,2\pi)$;
$\varphi(e^{i\theta})=\varphi(\theta)$. 

Given two (variable) quantities $A$ and $B$, we write $A\approx B$ if there are universal 
constants $C_1,C_2>0$ such that $C_1 A\le B\le C_2 A$. Similarly, we use the symbol $\lesssim$.
If $A_1,\dots,A_n$ are mathematical objects, the symbol $C(A_1,\dots,A_n)$ denotes a
constant which only depends on $A_1,\dots,A_n$.
}

\medskip

The Dirichlet space, together with the Hardy and the Bergman space, is one of the three classical spaces of holomorphic functions in the unit disc.  Its theory is old, but over the past thirty years much has been learned about it and about the operators acting on it. The aim of this article is to survey some aspects, old and and new, of the Dirichlet theory. 

We will concentrate on the ``classical'' Dirichlet space and we will not dwell into its interesting extensions and generalizations. The only exception, because it is instrumental to our discourse, will be some discrete function spaces on trees.

Our main focus will be a \it Carleson-type \rm program, which has been unfolding over the past thirty years. In particular, to obtain a knowledge of the Dirichlet space comparable to that of the Hardy space $H^2$: weighted imbedding theorems (``Carleson measures''); interpolating sequences; the Corona Theorem. We also consider other topics which are well understood in the Hardy case:  bilinear forms; applications of Nevanlinna-Pick theory; spaces which are necessary to develop the Hilbert space theory ($H^1$ and $BMO$, for instance, in the case of $H^2$). Let us further mention a topic which is specifically related to the Dirichlet theory, namely the rich relationship with potential theory and capacity.

This survey is much less than comprehensive. We will be mainly interested in the properties of the Dirichlet space \it per se\rm, and will talk about the rich operator theory that has been developed on it when this intersects our main path.  We are also biased, more or less voluntarily, towards the topics on which we have been working.  If the scope of the survey is narrow, we will try to give some details of the ideas and arguments, in the hope to provide a service to those who for the first time approach the subject.

Let us finally mention the excellent survey \cite{Ro} by Ross on the Dirichlet space, to which we direct the reader for the discussion on the local Dirichlet integral, Carleson's and Douglas' formulas, and the theory of invariant subspaces. Also, \cite{Ro} contains a discussion of zero sets and boundary behavior. We will only tangentially touch on these topics here.  The article \cite{Wu} surveys some results in the operator theory on the Dirichlet space. 

\section{The Dirichlet Space} 
\subsection{The Definition of the Dirichlet Space} The \it Dirichlet space \rm $\DDD$ is the Hilbert space of 
analytic functions $f$ in the unit disc $\DD=\{z\in\CC:\ |z|<1\}$ for which the semi-norm
\begin{equation}
\label{seminorm1}
\|f\|_{\DDD,*}^2=\int_\DD|f^\prime(z)|^2dA(z)
\end{equation}
is finite. Here, $dA(x+iy)=\frac{1}{\pi}dxdy$ is normalized area measure. An easy calculation with Fourier coefficients shows that, if $f(z)=\sum_{n=0}^\infty a_nz^n,$
\begin{equation}
 \label{seminorm2}
\|f\|_{\DDD,*}^2=\sum_{n=1}^\infty n|a_n|^2.
\end{equation}
The Dirichlet space sits then inside the analytic Hardy space $H^2$. In particular, Dirichlet functions have nontangential limits at $a.e.$ point on the boundary of $\DD$. Much more can be said though, both on the kind of approach region and on the size of the exceptional set, see the papers \cite{NRS}, \cite{Ro} and \cite{Tw}.

There are different ways to make the semi-norm into a norm. Here, we use as norm and inner 
product, respectively,
\begin{eqnarray}\label{norm}
\|f\|_{\DDD}^2&=&\|f\|_{\DDD,*}^2+\|f\|_{H^2(\SS)}^2,\crcr 
\langle f,g\rangle_{\DDD}&=&\langle f,g\rangle_{\DDD,*}+\langle f,g\rangle_{H^2(\SS)}\crcr
&=&\int_\DD f^\prime(z)\overline{g^\prime(z)}dA(z)+\frac{1}{2\pi}\int_0^{2\pi}f(e^{i\theta})\overline{g(e^{i\theta})}d\theta.
\end{eqnarray}
Another possibility is to let $\vert\vert\vert f \vert\vert\vert_{\DDD}^2=\|f\|_{\DDD,*}^2+|f(0)|^2$. Most analysis on $\DDD$ carries out in the same way, no matter the chosen norm. There is an important exception to this rule. The \it Complete Nevanlinna-Pick Property \rm is not invariant under change of norm since it is satisfied by $\|\cdot\|_\DDD$, but \textit{not} by $\vert\vert\vert\cdot\vert\vert\vert_\DDD$.

The Dirichlet semi-norm has two different, interesting geometric interpretations.
\begin{itemize}
\item[\hypertarget{Area1}{(Area)}] Since $Jf=|f^\prime|^2$ is the Jacobian determinant of $f$,  
\begin{equation}\label{area}
 \|f\|_{\DDD,*}^2=\int_\DD dA(f(z))=A(f(\DD))
\end{equation}
is the area of the image of $f$, counting multiplicities.  This invariance property, which depends on the \it values \rm of functions in  $\DDD$, implies that the Dirichlet class is invariant under biholomorphisms of the disc.
\item[\hypertarget{Hyp}{(Hyp)}] Let $ds^2=\frac{|dz|^2}{(1-|z|^2)^2}$ be the hyperbolic metric 
in the unit disc. The (normalized) hyperbolic area density is 
$d\lambda(z)=\frac{dA(z)}{(1-|z|^2)^2}$ and the intrinsic derivative of a holomorphic
$f:(\DD,ds^2)\to(\CC,|dz|^2)$ is $\delta f(z)=(1-|z|^2)|f^\prime(z)|$. Then,
\begin{equation}\label{hyperbolic}
\|f\|_{\DDD,*}^2=\int_\DD(\delta f)^2d\lambda
\end{equation}
is defined in purely hyperbolic terms.
\end{itemize}
Since any Blaschke product with $n$ factors is an $n$-to-$1$ covering of the unit disc, \hyperlink{Area1}{(Area)} implies that the Dirichlet space only contains finite Blaschke products. 
On the positive side, \hyperlink{Area1}{(Area)} allows one to define the Dirichlet space on any simply connected domain $\Omega\subsetneq\CC$,
$$
\|f\|_{\DDD(\Omega),*}^2:=\int_\Omega|f^\prime(z)|^2dA(z)=\|f\circ\varphi\|_{\DDD,*}^2,
$$
where $\varphi$ is any conformal map of the unit disc onto $\Omega$. In particular, this shows
that the Dirichlet semi-norm is invariant under the M\"obius group $\MMM(\DD)$. 

Infinite Blaschke products provide examples of bounded functions which are not in 
the Dirichlet space. On the other hand, conformal maps of the unit disc onto unbounded regions
having finite area provide examples of unbounded Dirichlet functions.

\smallskip

The group $\MMM(\DD)$ acts on $(\DD,ds^2)$ as the group of the sense preserving isometries. 
It follows from \hyperlink{Hyp}{(Hyp)} as well, then, that  the Dirichlet semi-norm is conformally invariant: 
$\|f\circ\varphi\|_{\DDD,*}=\|f\|_{\DDD,*}$ when $\varphi\in\MMM(\DD)$.  In fact, in \cite{AF} Arazy and Fischer showed that the Dirichlet semi-norm is the only M\"obius invariant, Hilbert semi-norm for functions  holomorphic in the unit disc. Also, the Dirichlet space is the only M\"obius invariant Hilbert space of holomorphic functions on the unit disc.
Sometimes it is preferable 
to use the \it pseudo-hyperbolic \rm metric instead,
$$
\rho(z,w):=\left|\frac{z-w}{1-\overline{w}z}\right|.
$$
The hyperbolic metric $d$ 
and the pseudo-hyperbolic metric are functionally related,
$$
d=\frac{1}{2}\log\frac{1+\rho}{1-\rho},\ \ 
\rho=\frac{e^d-e^{-d}}{e^d+e^{-d}}.
$$
The hyperbolic metric is the only Riemannian metric which coincides with the pseudo-hyperbolic 
metric in the infinitesimally small. The triangle property for the hyperbolic metric is 
equivalent to an enhanced triangle property for the pseudo-hyperbolic metric:
$$
\rho(z,w)\le\frac{\rho(z,t)+\rho(t,w)}{1+\rho(z,t)\rho(t,w)}.
$$ 
We conclude with a simple and entertaining consequence of \hyperlink{Hyp}{(Hyp)}. The isoperimetric inequality
\begin{equation}
\label{isoperimetric}
\textnormal{Area}(\Omega)\le\frac{1}{4\pi}[\textnormal{Length}(\partial\Omega)]^2
\end{equation}
is equivalent, by Riemann's Mapping Theorem and by the extension of \eqref{isoperimetric}
itself to areas with multiplicities, to the inequality
$$
\|f\|^2_{\DDD,*}=\int_\DD|f^\prime|^2dA\le
\left[\frac{1}{2\pi}\int_{\partial\DD}|f^\prime(e^{i\theta})|d\theta\right]^2
=\|f^\prime\|_{H^1}^2.
$$
Setting $f^\prime=g$ in the last inequality, then the isoperimetric inequality becomes the imbedding of the Hardy space $H^1$ into the Bergman space $A^2$ with optimal constant:
$$
\|g\|_{A^2}^2\le\|g\|_{H^1}^2,
$$
the constant functions being extremal.
\subsubsection{The Hardy space $H^2$.} The ``classical'' Hilbert spaces of holomorphic functions on the unit disc are the Dirichlet space just introduced, the Bergman space $A^2$,
$$
\|f\|_{A^2}^2=\int_\DD|f(z)|^2dA(z),
$$
and the Hardy space $H^2$,
$$
\|f\|_{H^2}^2=\sup_{0<r<1}\frac{1}{2\pi}\int_0^{2\pi}|f(re^{i\theta})|^2d\theta.
$$
The Hardy space is especially important because of its direct r\^ole in operator theory, as a prototype for the study of boundary problems for elliptic differential equations, for its analogy with important probabilistic objects (martingales), and for many other reasons. It has been studied in depth and its theory has become a model for the theory of other classical, and not so classical, function spaces. Many results surveyed in this article have been first proved, in a different version, for the Hardy space.

It is interesting to observe that both the Hardy and the Bergman space can be thought of as weighted Dirichlet spaces. We consider here the case of the Hardy space. If $f(0)=0$, then
$$
\|f\|_{H^2}^2=\int_\DD|f^\prime(z)|^2\log\frac{1}{|z|^{2}}dA(z)\approx\int_\DD|f^\prime(z)|^2(1-|z|^2)dA(z).
$$
This representation of the Hardy functions is more than a curiosity. Since $H^2$ is a reproducing kernel 
Hilbert space (RKHS) of functions, we are interested in having a norm which depends on the values of $f$ in the \it interior \rm of the unit disc. (Indeed, the usual norm is in terms of interior values as well, although through the mediation of $\sup$).

\subsection{The Definition in terms of Boundary Values and other Characterizations
of the Dirichlet Norm} Let $\SS=\partial\DD$ be the unit circle
and $\HHH^{1/2}(\SS)$ be the fractional Sobolev space containing the functions 
$\varphi\in L^2(\SS)$ having ``$1/2$'' derivative in $L^2(\SS)$. More precisely, if
$\varphi(\theta)=\sum_{n=1}^{+\infty}[a_n\cos(n\theta)+b_n\sin(n\theta)]$, then the $\HHH^{1/2}(\SS)$
semi-norm of $\varphi$ is
\begin{equation}\label{sobolev}
\|\varphi\|_{\HHH^{1/2}(\SS)}^2=\sum_{n=1}^{+\infty}n(|a_n|^2+|b_n|^2).
\end{equation}
By definition,
$$
\DDD\equiv\HHH^{1/2}(\SS)\cap H(\DD).
$$
This is a special instance the fact that, restricting Sobolev functions from the plane to smooth curves, 
``there is a loss of $1/2$ derivative''.
\subsubsection{The Definition of Rochberg and Wu.} In \cite{RW}, Rochberg and Wu gave a characterization of the Dirichlet norm in terms
of difference quotients of the function.
\begin{theorem}[Rochberg and Wu, \cite{RW}]
\label{RW}
Let $\sigma,\tau>-1$.
For an analytic function $f$ on the unit disc we have the semi-norm equivalence:
$$
\|f\|_{\DDD,*}^2\approx 
\int_\DD\int_\DD\frac{|f(z)-f(w)|^2}{|1-z\overline{w}|^{\sigma+\tau+4}}(1-|z|^2)^\sigma(1-|w|^2)^\tau dA(w)dA(z).
$$
\end{theorem}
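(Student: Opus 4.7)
The plan is to reduce the equivalence to a one-variable scalar estimate via rotation invariance and then to extract the asymptotics of the resulting integral.

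Writing $K_{\sigma,\tau}(z,w):=(1-|z|^2)^{\sigma}(1-|w|^2)^{\tau}|1-z\overline{w}|^{-(\sigma+\tau+4)}$, the kernel depends only on $|z|$, $|w|$, and $z\overline{w}$, hence is invariant under the simultaneous rotation $(z,w)\mapsto(e^{i\alpha}z,e^{i\alpha}w)$. Expanding $f(z)=\sum_{n\geq 0}a_nz^n$ and using orthogonality of the characters $e^{i(n-m)\alpha}$, all cross terms ($n\neq m$) in
$$|f(z)-f(w)|^2=\sum_{n,m}a_n\overline{a_m}(z^n-w^n)\overline{(z^m-w^m)}$$
vanish upon integration against $K_{\sigma,\tau}\,dA\,dA$, leaving
$$\int_\DD\int_\DD|f(z)-f(w)|^2K_{\sigma,\tau}\,dA\,dA=\sum_{n\geq 1}c_n|a_n|^2,\qquad c_n:=\int_\DD\int_\DD|z^n-w^n|^2K_{\sigma,\tau}\,dA\,dA.$$
Since $\|f\|_{\DDD,*}^2=\sum_{n\geq 1}n|a_n|^2$ and $c_0=0$, the theorem reduces to the scalar comparison $c_n\approx n$, uniformly in $n\geq 1$ (with constants depending only on $\sigma$ and $\tau$).

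To evaluate $c_n$, I would expand $|z^n-w^n|^2=|z|^{2n}+|w|^{2n}-z^n\overline{w}^n-\overline{z}^nw^n$ together with
$$|1-z\overline{w}|^{-2s}=\sum_{j,k\geq 0}\frac{\Gamma(s+j)\Gamma(s+k)}{\Gamma(s)^2\,j!\,k!}(z\overline{w})^j(\overline{z}w)^k,\qquad s=\tfrac{\sigma+\tau+4}{2},$$
and integrate termwise using
$$\int_\DD z^p\overline{z}^{\,q}(1-|z|^2)^\sigma dA(z)=\delta_{p,q}\frac{p!\,\Gamma(\sigma+1)}{\Gamma(p+\sigma+2)}.$$
Each of the four pieces then collapses to an explicit hypergeometric series in $n$, and Stirling's approximation $\Gamma(n+\alpha)/\Gamma(n+\beta)\sim n^{\alpha-\beta}$ yields $c_n\approx n$.

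The main obstacle is the cancellation. Each of the four contributions is individually divergent: the Forelli--Rudin estimate
$$\int_\DD\frac{(1-|w|^2)^\tau}{|1-z\overline{w}|^{\sigma+\tau+4}}dA(w)\approx(1-|z|^2)^{-(\sigma+2)}$$
gives $\int_\DD\int_\DD|z|^{2n}K_{\sigma,\tau}\,dA\,dA\approx\int_\DD|z|^{2n}(1-|z|^2)^{-2}dA(z)=+\infty$, and similarly for the other three pieces. The finiteness and the correct size of $c_n$ are therefore visible only after the four series are combined; no pointwise domination of $|z^n-w^n|^2$ (such as $|z^n-w^n|^2\leq n^2|z-w|^2\max(|z|,|w|)^{2(n-1)}$) is sharp enough to close the estimate. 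The technical heart of the proof is the careful Gamma-ratio bookkeeping that makes this cancellation explicit.

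As a sanity check, the upper bound alone also follows from $|f(z)-f(w)|^2\leq|z-w|^2\int_0^1|f'((1-t)w+tz)|^2dt$ together with the measure-preserving change of variables $(z,w)\mapsto(\zeta,u)=((1-t)w+tz,z-w)$ (Jacobian $1$) and Fubini in $t$, reducing matters to a uniform bound on the associated weight integral; a matching lower bound comes from restricting the integral to the near-diagonal region $|z-w|<c(1-|z|^2)$, on which Taylor's formula gives $|f(z)-f(w)|^2\approx|f'(z)|^2|z-w|^2$ and all weight factors are comparable to powers of $(1-|z|^2)$.
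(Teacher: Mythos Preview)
The survey does not prove this theorem; it only records the statement and cites \cite{RW} (and \cite{AFP} for the exact case $\sigma=\tau=1/2$). There is therefore no in-paper proof to compare against, and I can only assess your proposal on its own merits.

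Your reduction via rotation invariance to the scalar asymptotics $c_n\approx n$ is correct. Of your two routes, the coefficient computation is valid in principle, and you have correctly isolated the difficulty: the four summands coming from $|z^n-w^n|^2$ are individually non-integrable against $K_{\sigma,\tau}$, so the hypergeometric series must be combined before asymptotics are extracted. You acknowledge this but do not carry it out, so as written this route is a plan rather than a proof.

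Your ``sanity check'' is in fact the more robust argument and closer in spirit to \cite{RW}. The upper bound via $f(z)-f(w)=(z-w)\int_0^1 f'((1-t)w+tz)\,dt$, the change of variables $(z,w)\mapsto(\zeta,u)$ with unit Jacobian, and a Forelli--Rudin-type uniform bound on the resulting weight integral is the standard method. One caution on the lower bound: the pointwise claim $|f(z)-f(w)|^2\approx|f'(z)|^2|z-w|^2$ on the near-diagonal region is false as stated (it fails wherever $f'$ vanishes). What you actually need, and what is true, is the integrated inequality
\[
\int_{|w-z|<r}|f(z)-f(w)|^2\,dA(w)\ \ge\ |f'(z)|^2\int_{|w-z|<r}|w-z|^2\,dA(w)\ \approx\ |f'(z)|^2\,r^4,
\]
which follows from the Taylor expansion $f(w)-f(z)=\sum_{k\ge1}\tfrac{f^{(k)}(z)}{k!}(w-z)^k$ and the orthogonality of the monomials $(w-z)^k$ on the disc $|w-z|<r$. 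Taking $r=c(1-|z|^2)$ and using $K_{\sigma,\tau}\approx(1-|z|^2)^{-4}$ on that region then gives the lower bound directly, for arbitrary $f$.
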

For $\sigma=\tau=1/2$, the Theorem holds with equality instead of approximate equality; see \cite{AFP}. The result in \cite{RW} extends to weighted Dirichlet spaces and, with a different, essentially, discrete proof, to analytic Besov spaces \cite{BlPa}. The characterization in Theorem \ref{RW} is similar in spirit to the usual boundary characterization for functions in $\HHH^{1/2}(\SS)$:
$$
\|\varphi\|_{\HHH^{1/2}(\SS)}^2\approx\int_0^{2\pi}\int_0^{2\pi}\frac{|\varphi(\zeta)-\varphi(\xi)|^2}{|\zeta-\xi|^{2}}d\zeta d\xi.
$$
\subsubsection{The characterization of B\"oe.} In \cite{Bo2}, B\"oe obtained an interesting characterization of the norm in analytic Besov spaces in terms of the mean oscillation of the function's modulus with respect to harmonic measure. We give B\"oe's result in the Dirichlet case.
\begin{theorem}[B\"oe, \cite{Bo2}]
\label{bjarte} For $z\in\DD$, let 
$$
d\mu_z(e^{i\theta})=\frac{1}{2\pi}\frac{1-|z|^2}{|e^{i\theta}-z|^2}d\theta
$$
be harmonic measure on $\SS$ with respect to $z$. Then,
$$
\|f\|_{\DDD,*}^2\approx
\int_\DD\left(\int_0^{2\pi}|f(e^{i\theta})|d\mu_{z}(e^{i\theta})-|f(z)|\right)^2\frac{dA(z)}{(1-|z|^2)^2}.
$$
\end{theorem}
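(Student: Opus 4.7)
The strategy is to compare the B\"oe integrand $P[|f|](z)-|f(z)|$, which measures the failure of $|f|$ to be harmonic at hyperbolic scale around $z$, with the Dirichlet seminorm $\int_{\DD}|f'|^{2}\,dA$ via Green's identity. To streamline the computation I would first reduce to the case that $f$ has no zeros on $\overline{\DD}$, by factoring $f=BO$ with $B$ a Blaschke product and $O$ outer, handling Blaschke factors by approximation, and then replacing $O$ by $O+\varepsilon$ before passing to the limit $\varepsilon\to 0$. In this regime $|f|$ is smooth and subharmonic with $\Delta|f|=|f'|^{2}/|f|$, and Green's identity yields
\[
P[|f|](z)-|f(z)| \;=\; \tfrac{1}{2}\int_{\DD} G(z,w)\,\frac{|f'(w)|^{2}}{|f(w)|}\,dA(w),
\]
where $G(z,w)=\log\bigl|(1-\bar w z)/(z-w)\bigr|$ is the Green function of $\DD$.

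For the upper bound, square this representation and apply Fubini to obtain
\[
\int_{\DD}(P[|f|]-|f|)^{2}\frac{dA(z)}{(1-|z|^{2})^{2}}= \tfrac{1}{4}\int_{\DD}\!\int_{\DD} K(w_{1},w_{2})\,\frac{|f'(w_{1})|^{2}\,|f'(w_{2})|^{2}}{|f(w_{1})|\,|f(w_{2})|}\,dA(w_{1})\,dA(w_{2}),
\]
with the double Green potential kernel
\[
K(w_{1},w_{2}) \;:=\; \int_{\DD} G(z,w_{1})\,G(z,w_{2})\,\frac{dA(z)}{(1-|z|^{2})^{2}}.
\]
Because both $G$ and $dA/(1-|z|^{2})^{2}$ are M\"obius invariant, $K$ is a function of the pseudo-hyperbolic distance $\rho(w_{1},w_{2})$ alone, and a polar computation in hyperbolic coordinates gives its precise decay. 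A Schur test with an appropriate power of $|f|$ as auxiliary weight is then designed to convert the double integral into the desired bound $\lesssim\int|f'|^{2}\,dA$. For the reverse inequality the argument is less direct: I would either proceed by duality, testing the B\"oe integral against reproducing kernel atoms of $\DDD$, or identify the B\"oe quantity as a square function controlling the Hardy--Sobolev seminorm $\||f|\|_{\HHH^{1/2}(\SS)}$, which for outer $f$ is comparable to $\|f\|_{\DDD}$ through \eqref{sobolev}.

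The chief obstacle is that the naive Cauchy--Schwarz bound
\[
(P[|f|]-|f|)^{2} \;\leq\; P[|f|^{2}]-|f|^{2} \;=\; 2\int_{\DD} G(z,w)\,|f'(w)|^{2}\,dA(w)
\]
is strictly too lossy for the upper bound: integrating against $dA/(1-|z|^{2})^{2}$ and swapping order produces the divergent $\int G(z,w)\,dA(z)/(1-|z|^{2})^{2}$, already in the trivial case $f(z)=z$. Cauchy--Schwarz therefore loses exactly the logarithmic improvement that the true kernel $K(w_{1},w_{2})$ supplies, and the sharpness of the final estimate hinges on quantifying this improvement. A secondary subtlety is the uniform control of the approximation $f+\varepsilon\to f$ in the B\"oe quantity, which calls for a careful separate treatment of the inner factor of $f$.
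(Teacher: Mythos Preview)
The paper is a survey and contains no proof of this theorem; it simply attributes the result to B\"oe and moves on. So there is nothing to compare your proposal against, and I can only evaluate it on its own merits.

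Your Green-function representation is correct and is a natural starting point, and your diagnosis of why the crude bound $(P[|f|]-|f|)^{2}\le P[|f|^{2}]-|f|^{2}$ loses a logarithm is exactly right. But the step you label ``Schur test with an appropriate power of $|f|$'' is a genuine gap, not a routine detail. After squaring and applying Fubini you need
\[
\iint K(w_{1},w_{2})\,g(w_{1})\,g(w_{2})\,dA(w_{1})\,dA(w_{2})\;\lesssim\;\int g(w)\,|f(w)|\,dA(w),\qquad g=\frac{|f'|^{2}}{|f|},
\]
which is \emph{not} an $L^{2}\!\to\!L^{2}$ boundedness statement for the integral operator with kernel $K$: the right-hand side is a \emph{first}-power pairing $\langle g,|f|\rangle$, not a quadratic $\|g\|_{L^{2}(w)}^{2}$. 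A Schur test would at best yield $\langle Kg,g\rangle\lesssim\int g^{2}h\,dA$ for some weight $h$ built from $|f|^{a}$, and there is no choice of $a$ that turns $\int g^{2}|f|^{a}\,dA=\int |f'|^{4}|f|^{a-2}\,dA$ into $\int|f'|^{2}\,dA$. The required estimate therefore is not of Schur type; one needs either a pointwise domination of $\int K(w_{1},w_{2})g(w_{2})\,dA(w_{2})$ by $C|f(w_{1})|$ (which fails in general near small values of $|f|$) or a genuinely different mechanism to absorb the $|f|^{-1}$ factors.

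Your lower-bound suggestions are also incomplete. The route through $\||f|\|_{\HHH^{1/2}(\SS)}\approx\|f\|_{\DDD}$ for outer $f$ is not a consequence of \eqref{sobolev}, which concerns the holomorphic $f$ itself; the trivial direction $||f(\zeta)|-|f(\xi)||\le|f(\zeta)-f(\xi)|$ gives only $\||f|\|_{\HHH^{1/2}}\lesssim\|f\|_{\DDD}$, while the reverse comparison (already false for finite Blaschke products) is for outer $f$ essentially Carleson's formula for the Dirichlet integral, a result of comparable depth to the one you are trying to prove. Invoking it here is close to circular unless you supply an independent argument. Likewise, the ``handling Blaschke factors by approximation'' needs care: infinite Blaschke products do not lie in $\DDD$, and for finite products the inner factor contributes nontrivially to both sides, so the reduction to outer $f$ does not come for free.
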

\subsection{The Reproducing Kernel}
 The space $\DDD$ has bounded point evaluation 
$\eta_z:f\mapsto f(z)$ at each point $z\in\DD$. Equivalently, 
it has a reproducing kernel. In fact, it is easily checked that
$$
f(z)=\langle f,K_z\rangle_{\DDD},\ \mbox{with}\ K_z(w)=\frac{1}{\overline{z}w}\log\frac{1}{1-\overline{z}w}.
$$
(For the norm $\||\cdot|\|_\DDD$ introduced earlier, the reproducing kernel is
$$
\tilde{K}_z(w)=1+\log\frac{1}{1-\overline{z}w}
$$
which is comfortable in estimates for the integral operator having $\tilde{K}_z(w)$ as kernel).

It is a general fact that $\|\eta_z\|_{\DDD^*}=\|K_z\|_\DDD$ and an easy calculation gives
$$
\|K_z\|_\DDD^2\approx1+\log\frac{1}{1-|z|}\approx 1+d(z,0).
$$
More generally, we have that functions in the Dirichlet space are H\"older continuous of
order $1/2$ with respect to the hyperbolic distance:
\begin{equation}\label{Holder}
|f(z)-f(w)|\le C\|f\|_{\DDD,*}d(z,w)^{1/2}.
\end{equation}
The reproducing kernel $K_z(w)=K(z,w)$ satisfies some estimates which are important in
applications and reveal its geometric nature:
\begin{itemize}
\item[(a)] $\Re K(z,w)\approx |K(z,w)|$ (here and below, $\Re(x+iy)=x$ is the real part of $x+iy$); 
\item[(b)] Let $z\wedge w$ be the point which is closest to the origin (in either the hyperbolic
or Euclidean metric) on the hyperbolic geodesic joining $z$ and $w$. Then,  
$$
\Re K(z,w)\approx d(0,z\wedge w)+1;
$$
\item[(c)] $\frac{d\ }{dw}K(z,w)=\frac{\overline{z}}{1-\overline{z}w}$, and we have:
\begin{itemize}
\item[(c1)] $\Re\frac{1}{1-\overline{z}w}\ge0$ for all $z,w$ in $\DD$; 
\item[(c2)] $\Re \frac{1}{1-\overline{z}w}\approx(1-|z|^2)^{-1}$ for $w\in S(z)$, where
$$
S(z)=\{w\in\DD:\ |1-\overline{z}w|\le 1-|z|^2\}
$$
is the Carleson box with centre $z$.
\end{itemize}
\end{itemize}
\section{Carleson measures} 
\subsection{Definition and the Capacitary Characterization} A positive Borel measure measure $\mu$ on $\overline{\DD}$ is 
called a \it Carleson measure \rm for the Dirichlet space if for some finite $C>0$
\begin{equation}\label{carleson}
\int_{\overline{\DD}}|f|^2d\mu\le C\|f\|_\DDD^2\quad\forall f\in\DDD.
\end{equation}
The smallest $C$ in \eqref{carleson} is the \it Carleson measure norm \rm of $\mu$ and 
it will be denoted by
$[\mu]=[\mu]_{CM(\DDD)}$. The space of the Carleson measures for $\DDD$ is denoted
by $CM(\DDD)$. Carleson measures supported on the boundary could be thought of 
as substitutes for point evaluation (which is not well defined at boundary points). 
By definition, in fact, the function $f$ exists, in a 
quantitative way, on sets which support a strictly positive Carleson measure. It is then
to be expected that there is a relationship between Carleson measures and boundary values of 
Dirichlet functions.  This is further explained below.

Carleson measures proved to be a central concept in the theory of the Dirichlet space 
in many other ways. Let us mention:
\begin{itemize}
 \item Multipliers; 
 \item Interpolating Sequences; 
 \item Bilinear Forms; 
 \item Boundary Values.
\end{itemize} 
Since Carleson measures play such an important role, it is important to have efficient ways to characterize them. The first such characterization was given by Stegenga \cite{Ste} in terms of capacity.

We first introduce the Riesz-Bessel kernel of order $1/2$ on $\SS$,
\begin{equation}\label{besselkernel}
k_{\SS,1/2}(\theta,\eta)=|\theta-\eta|^{-1/2},
\end{equation}
where the difference $\theta-\eta\in[-\pi,\pi)$ is taken modulo $2\pi$. The kernel extends to a 
convolution operator, which we still call $k_{\SS,1/2}$, acting on Borel measures supported on
$\SS$,
$$
k_{\SS,1/2}\nu(\theta)=\int_\SS k_{\SS,1/2}(\theta-\eta)d\nu(\eta).
$$
Let $E\subseteq\SS$ be a closed set. The $(\SS,1/2)$-Bessel capacity of $E$ is
\begin{equation}\label{capacity}
\capacity_{\SS,1/2}(E):=\inf\left\{\|h\|_{L^2(\SS)}^2:
\ h\ge0\ \mbox{and}
\ k_{1/2,\SS}h\ge1\ \mbox{on}\ E\right\}.
\end{equation}
It is a well known fact \cite{Stein} that 
$\|k_{\SS,1/2}h\|_{\HHH^{1/2}(\SS)}\approx \|h\|_{L^2(\SS)}$, i.e., that 
$h\mapsto k_{\SS,1/2}h$ is an approximate isometry of $L^2(\SS)$ into $\HHH^{1/2}(\SS)$. Hence,
$$
\capacity_{\SS,1/2}(E)\approx\inf\left\{\|\varphi\|_{\HHH^{1/2}(\SS)}^2:
(k_{\SS,1/2})^{-1}\varphi\ge0\ \mbox{and}\ \varphi\ge1\ \mbox{on}\ E\right\}.
$$
\begin{theorem}[Stegenga, \cite{Ste}]
\label{stegenga}Let $\mu\ge0$ be a positive Borel measure on $\DD$. Then $\mu$
is Carleson for $\DDD$ if and only if there is a positive constant $C(\mu)$ such that, 
for any choice of finitely many disjoint, closed arcs $I_1,\dots,I_n\subseteq\SS$, we have that
\begin{equation}\label{stegengacondition}
\mu\left(\cup_{i=1}^n S(I_i)\right)\le C(\mu)\capacity_{\SS,1/2}\left(\cup_{i=1}^n I_i\right).
\end{equation}
Moreover, $C(\mu)\approx[\mu]_{CM(\DDD)}$. 
\end{theorem}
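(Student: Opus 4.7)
The plan is to treat the two directions separately, with necessity being the test-function construction and sufficiency requiring a capacitary strong-type inequality.

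\smallskip

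\textbf{Necessity.} Given disjoint closed arcs $I_1,\dots,I_n\subseteq\SS$, set $E=\cup_{j=1}^n I_j$ and pick a near-extremal $h\ge 0$ for the capacity \eqref{capacity}, so that $\varphi:=k_{\SS,1/2}h\ge 1$ on $E$ and $\|h\|_{L^2(\SS)}^2\lesssim\capacity_{\SS,1/2}(E)$. Let $u$ be the Poisson extension of $\varphi$ to $\DD$ and let $f\in H(\DD)$ be the holomorphic function (unique up to an imaginary additive constant) with $\Re f=u$. The equivalence $\|\varphi\|_{\HHH^{1/2}(\SS)}\approx\|h\|_{L^2(\SS)}$ recalled after \eqref{capacity}, together with the boundary characterization \eqref{sobolev}, yields $\|f\|_{\DDD}^2\lesssim\capacity_{\SS,1/2}(E)$. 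A standard Poisson-kernel estimate shows that for $z\in S(I_j)$ the harmonic measure $\mu_z$ gives mass $\gtrsim 1$ to $I_j$, so $u(z)\gtrsim 1$ and hence $|f(z)|\gtrsim 1$ on $\cup_j S(I_j)$. Plugging this $f$ into \eqref{carleson} gives
$$
\mu\Bigl(\bigcup_{j=1}^n S(I_j)\Bigr)\;\lesssim\;\int_{\DD}|f|^2\,d\mu\;\le\;[\mu]_{CM(\DDD)}\|f\|_{\DDD}^2\;\lesssim\;[\mu]_{CM(\DDD)}\,\capacity_{\SS,1/2}(E),
$$
which is \eqref{stegengacondition} with $C(\mu)\lesssim[\mu]_{CM(\DDD)}$.

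\smallskip

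\textbf{Sufficiency.} Assume \eqref{stegengacondition}. For $f\in\DDD$ write the boundary value $\varphi=f|_\SS\in\HHH^{1/2}(\SS)$ as $\varphi=k_{\SS,1/2}h$ with $h\ge 0$ and $\|h\|_{L^2(\SS)}^2\approx\|f\|_\DDD^2$ (using that $\varphi$ may be replaced by $|\varphi|$ or its Riesz-potential majorant without worsening the $L^2$-norm of $h$). Let $F$ denote a nontangential maximal function of $f$ on $\DD$ so that $\{z\in\DD:|f(z)|>t\}$ is, up to a bounded dilation of tents, contained in a union of Carleson boxes $\bigcup_j S(I_j^t)$ whose bases form a disjoint cover of the boundary level set $\{\zeta\in\SS:F^*(\zeta)>t\}$. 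Apply the hypothesis to this finite family (passing to a compact exhaustion) to obtain
$$
\mu\bigl(\{|f|>t\}\bigr)\;\le\;C(\mu)\,\capacity_{\SS,1/2}\bigl(\{F^*>t\}\bigr).
$$
The layer-cake formula then gives
$$
\int_{\DD}|f|^2\,d\mu\;=\;2\int_0^\infty t\,\mu(\{|f|>t\})\,dt\;\le\;2\,C(\mu)\int_0^\infty t\,\capacity_{\SS,1/2}(\{F^*>t\})\,dt.
$$
The final step is the capacitary strong-type inequality of Maz\cprime ya--Adams for the Bessel kernel of order $1/2$ on $\SS$,
$$
\int_0^\infty t\,\capacity_{\SS,1/2}(\{k_{\SS,1/2}h>t\})\,dt\;\lesssim\;\|h\|_{L^2(\SS)}^2,
$$
which one combines with the pointwise domination $F^*\lesssim k_{\SS,1/2}h$ (Poisson integral bounded by Riesz potential) to conclude $\int|f|^2d\mu\lesssim C(\mu)\|f\|_\DDD^2$, so $[\mu]_{CM(\DDD)}\lesssim C(\mu)$.

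\smallskip

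\textbf{Main obstacle.} The necessity direction is routine once the extremal $\varphi$ is chosen; the real work is in sufficiency. The critical ingredient is the capacitary strong-type inequality, whose proof requires a Muckenhoupt-type rearrangement or a Choquet-integral argument; equally delicate is justifying the Whitney-style decomposition of the interior level set $\{|f|>t\}$ into Carleson boxes whose bases are pairwise disjoint, so that \eqref{stegengacondition} applies uniformly in $t$ and the covering constants do not blow up when summed against the layer-cake integral.
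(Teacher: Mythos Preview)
The survey does not prove Theorem~\ref{stegenga}; it is stated with a citation to \cite{Ste}, and the surrounding paragraphs only situate it among Sobolev trace inequalities, pointing to \cite{Adams}, \cite{Maz}, \cite{KaVe}, \cite{KS1} for that parallel theory. There is therefore no in-paper argument to compare your proposal against.

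That said, your outline is the classical Stegenga route and is structurally correct: necessity by testing \eqref{carleson} on the holomorphic completion of a near-extremal capacitary potential, sufficiency by layer-cake plus the Adams--Maz'ya capacitary strong-type inequality. Two steps are looser than the write-up suggests. First, a generic boundary value $\varphi=f|_\SS$ is \emph{not} of the form $k_{\SS,1/2}h$ with $h\ge0$; passing to $|\varphi|$ does keep you in $\HHH^{1/2}(\SS)$ (via the difference-quotient characterization), but producing a genuine nonnegative-density potential majorant with $\|h\|_{L^2}\lesssim\|f\|_\DDD$ still needs an argument, or else one should invoke the form of the strong-type inequality that bounds $\int_0^\infty t\,\capacity_{\SS,1/2}(\{|g|>t\})\,dt$ directly for arbitrary $g\in\HHH^{1/2}(\SS)$. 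Second, ``Poisson integral bounded by Riesz potential'' does not by itself control $|f|$, because the imaginary part of $f$ involves the conjugate Poisson kernel; you should either treat $\Re f$ and $\Im f$ separately (the conjugation operator is bounded on $\HHH^{1/2}(\SS)$, so this costs only a constant) or replace the Poisson route by a radial-variation bound $|f(z)-f(0)|\le\int_0^{|z|}|f'(re^{i\theta})|\,dr$, which leads more directly to a potential estimate on the boundary. You correctly flag both the strong-type inequality and the covering argument as the delicate points, so the gaps are identified even if not quite closed.
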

It is expected that capacity plays a r\^{o}le in the theory of the Dirichlet space. In fact,
as we have seen, the Dirichlet space is intimately related to at least two Sobolev spaces 
($\HHH^{1/2}(\SS)$ and $\HHH^{1}(\CC)$, which is defined below), and capacity plays in Sobolev
theory the r\^{o}le played by Lebesgue measure in the theory of Hardy spaces. In Dirichlet space
theory, this fact has been recognized for a long time see, for instance, \cite{Beu}; actually, before Sobolev theory reached maturity.

It is a useful exercise comparing Stegenga's capacitary condition and Carleson's condition for the Carleson measures for the Hardy space. In \cite{Car2} Carleson proved that for a positive Borel measure $\mu$ on $\DD$,
$$
\int_\DD|f|^2d\mu\le C(\mu)\|f\|_{H^2}^2\ \iff\ \mu(S(I))\le C^\prime(\mu)|I|,
$$
for all closed sub-arcs $I$ of the unit circle. Moreover, the best constants in the two 
inequalities are comparable. In some sense, Carleson's characterization says that $\mu$
satisfies the imbedding $H^2\hookrightarrow L^2(\mu)$ if and only if it behaves 
(no worse than) the arclength measure on $\SS$, the measure underlying the Hardy theory.
We could also ``explain'' Carleson's condition in terms of the reproducing kernel for the
Hardy space,
$$
K_z^{H^2}(w)=\frac{1}{1-\overline{z}w},\ \|K^{H^2}_z\|_{H^2}^2\approx(1-|z|)^{-1}.
$$
Let $I_z$ be the arc having center in $z/|z|$ and arclength $2\pi(1-|z|)$. 
Carleson's condition can then be rephrased as
$$
\mu(S(I_z))\le C(\mu)\|K^{H^2}_z\|_{H^2}^2.
$$
Similar conditions hold for the (weighted) Bergman spaces. One might expect that a 
necessary and sufficient condition for a measure to belong to belong to $CM(\DDD)$ might be
\begin{equation}\label{simplecondition}
\mu(S(I_z))\le C(\mu)\|K_z\|_{\DDD}\approx\frac{1}{\log\frac{2}{1-|z|}}\approx
\capacity_{\SS,1/2}(I_z).
\end{equation}
The  ``simple condition'' \eqref{simplecondition} is necessary, but not sufficient. 
Essentially, this follows from the fact that the simple condition does not ``add up''.  If
$I_j$, $j=1,\dots,2^n$, are adjacent arcs having the same length, and $I$ is their union,
then
$$
\sum_j\capacity_{\SS,1/2}(I_j)\approx \frac{2^n}{(\log2) n+\log\frac{4\pi}{|I|}}
>\log\frac{1}{\frac{4\pi}{|I|}}\approx\capacity_{\SS,1/2}(I).
$$
Stegenga's Theorem has counterparts in the theory of Sobolev spaces where the problem is that of
finding necessary and sufficient conditions on a measure $\mu$ so that a \it trace inequality \rm holds. For instance, consider the case of the Sobolev space $\HHH^1(\RR^n)$, containing those functions $h:\RR^n\to\CC$ with finite norm
$$
\|h\|_{\HHH^1(\RR^n)}^2=\|h\|_{L^2(\RR^n)}^2+\|\nabla h\|_{L^2(\RR^n)}^2,
$$
the gradient being the distributional one. The positive Borel measure $\mu$ on $\RR^n$ satisfies
a trace inequality for $\HHH^1(\RR^n)$ if the imbedding inequality
\begin{equation}\label{traceinequality}
\int_{\RR^n}|h|^2d\mu\le C(\mu)\|h\|_{\HHH^1(\RR^n)}^2
\end{equation}
holds. It turns out that \eqref{traceinequality} is equivalent to the condition that
\begin{equation}\label{tracecondition}
\mu(E)\le C(\mu)\capacity_{\HHH^1(\RR^n)}(E)
\end{equation}
holds for all compact subsets $E\subseteq\RR^n$. Here, $\capacity_{\HHH^1(\RR^n)}(E)$ is 
the capacity naturally associated with the space $\HHH^1(\RR^n)$.

There is an extensive literature on trace inequalities, which is closely related 
to the study of Carleson measures for the Dirichlet space and its extensions. We will not discuss it further, but instead direct the interested reader to \cite{Adams}, \cite{KaVe}, \cite{KS1} and \cite{Maz}, for
a first approach to the subject from different perspectives.

Complex analysts may be more familiar with the logarithmic capacity, than with Bessel capacities.
It is a classical fact that, for subsets $E$ of the unit circle (or of the real line)
\begin{equation}\label{comparisonofcapacities}
\capacity_{\SS,1/2}(E)\approx\log\gamma(E)^{-1},
\end{equation}
where $\gamma(E)$ is the \it logarithmic capacity \rm (the \it transfinite diameter\rm) of the
set $E$.
\subsection{Characterizations by Testing Conditions} The capacitary condition has to be checked
over all finite unions of arcs. It is natural to wonder whether there is a ``single box'' 
condition characterizing the Carleson measures. In fact, there is a string of such 
conditions, which we are now going to discuss. The following statement rephrases the 
characterization given in \cite{ARS1}. Let $k(z,w)=\Re K(z,w)$.
\begin{theorem}[Arcozzi, Rochberg and Sawyer, \cite{ARS1}]
\label{newtestingcondition}
Let $\mu$ be a positive Borel measure on $\overline{\DD}$. Then $\mu$ is a Carleson measure 
for $\DDD$ if and only if $\mu$ is finite and
\begin{equation}
\label{eqnewtesting}
\int_{\overline{S(\zeta)}}\int_{\overline{S(\zeta)}}k(z,w)d\mu(w)d\mu(z)\le C(\mu)\mu(\overline{S(\zeta)})
\end{equation}
for all $\zeta$ in $\DD$.

Moreover, if $C_{\mbox{best}}(\mu)$ is the best constant in \eqref{eqnewtesting}, then 
$$
[\mu]_{CM(\mu)}\approx C_{\mbox{best}}(\mu)+\mu(\overline{\DD}).
$$
\end{theorem}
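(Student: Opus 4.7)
The plan is to prove necessity by straightforward duality, and sufficiency by transferring to a dyadic tree model and invoking a two-weight testing theorem for positive operators.

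For necessity, observe that the Carleson inequality \eqref{carleson} just says the inclusion $I:\DDD\hookrightarrow L^2(\mu)$ is bounded with norm comparable to $[\mu]^{1/2}$. Using the reproducing kernel, the adjoint $I^\ast:L^2(\mu)\to\DDD$ acts as $(I^\ast g)(z)=\int K(z,w)g(w)\,d\mu(w)$, and thus
$$
\|I^\ast g\|_\DDD^2=\int\!\!\int K(z,w)\,g(z)\overline{g(w)}\,d\mu(z)d\mu(w).
$$
Testing with $g=\chi_{\overline{S(\zeta)}}\ge0$ and taking real parts gives \eqref{eqnewtesting} with constant $[\mu]$. Finiteness of $\mu$ follows from testing \eqref{carleson} on $f\equiv 1\in\DDD$, yielding $\mu(\overline{\DD})\le[\mu]$.

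For sufficiency, I would discretize the disc using a Bergman-type dyadic tree $\TTT$ (a Whitney decomposition of $\DD$ into hyperbolic boxes indexed by the nodes of a rooted tree). The classical fact here is the equivalence
$$
\|f\|_\DDD^2\ \approx\ |f(o)|^2+\sum_{\alpha\in\TTT}|\Delta f(\alpha)|^2,
\qquad \Delta f(\alpha):=f(\alpha)-f(\alpha^{-1}),
$$
where $\alpha^{-1}$ is the parent of $\alpha$ and $f(\alpha)$ means the value at a distinguished point of the Whitney box. Writing $f(\alpha)=\sum_{\beta\le\alpha}\Delta f(\beta)$, the Carleson inequality \eqref{carleson} becomes equivalent to a two-weight inequality for the tree Hardy operator
$$
I\varphi(\alpha):=\sum_{\beta\le\alpha}\varphi(\beta),
\qquad \sum_\alpha |I\varphi(\alpha)|^2\,\widetilde{\mu}(\alpha)\le C\sum_\alpha |\varphi(\alpha)|^2,
$$
where $\widetilde{\mu}(\alpha)$ denotes the total mass of $\mu$ in the Whitney box at $\alpha$ (plus a boundary contribution). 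The key step is to apply a Sawyer-type two-weight testing theorem for positive operators, which asserts that this inequality holds iff it suffices to test on the characteristic functions of cones $S(\alpha)=\{\gamma\in\TTT:\gamma\ge\alpha\}$, i.e.\ iff
$$
\sum_{\gamma\ge\alpha}\bigl(I^\ast(\chi_{S(\alpha)}\widetilde{\mu})(\gamma)\bigr)^2\ \le\ C\,\widetilde{\mu}(S(\alpha))\qquad\forall\alpha\in\TTT.
$$

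The dictionary between continuum and tree is the payoff: using kernel estimates (a) and (b), $\Re K(z,w)\approx 1+d(0,z\wedge w)$, and $1+d(0,z\wedge w)$ on the tree is exactly the number of common ancestors of the boxes containing $z$ and $w$. A direct computation then identifies the tree testing sum with
$$
\int_{\overline{S(\zeta)}}\!\!\int_{\overline{S(\zeta)}} k(z,w)\,d\mu(z)d\mu(w)
$$
(modulo the usual comparison between Carleson boxes $S(\zeta)$ and tree cones). Thus hypothesis \eqref{eqnewtesting} is precisely the discrete testing condition, and the tree two-weight theorem yields \eqref{carleson}.

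The main obstacle is the sufficiency direction, and inside it the two-weight testing theorem for the tree operator $I$: this is not a formal manipulation but a genuine combinatorial/stopping-time argument (a good-$\lambda$ or stopping-time decomposition along the tree), adapted from Sawyer's maximal function characterization. A subsidiary but important technical point is to justify the passage between continuum and tree with sharp constants (so that the equivalence $[\mu]_{CM(\DDD)}\approx C_{\text{best}}(\mu)+\mu(\overline{\DD})$ is preserved), which requires handling boundary-supported mass carefully: for $\zeta\in\SS$ the box $\overline{S(\zeta)}$ must include $\zeta$, and the kernel $k$ must be interpreted as a boundary-extended nonnegative quantity.
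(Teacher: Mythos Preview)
Your approach is correct and is essentially the tree-model route the paper develops in its Section~4 (Theorems~\ref{discretocontinuo} and~\ref{anoldone}): necessity via the reproducing-kernel duality computation, sufficiency by passing to the tree Hardy operator $\III$ and invoking a Sawyer-type two-weight testing theorem, with the kernel estimate $\Re K(z,w)\approx 1+d(0,z\wedge w)$ providing the dictionary between \eqref{eqnewtesting} and the discrete condition \eqref{treedual}.

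One caveat worth flagging: the ``classical fact'' you invoke, namely the \emph{two-sided} equivalence
\[
\|f\|_\DDD^2\ \approx\ |f(o)|^2+\sum_{\alpha\in\TTT}|\Delta f(\alpha)|^2,
\]
is not what the paper establishes and is not standard. The paper proves only the one-sided bound $\sum_\alpha|\Delta f(\alpha)|^2\lesssim\|f\|_\DDD^2$ (Proposition~\ref{restriction}) and explicitly remarks that $\ell^2(\TTT)$ is \emph{larger} than $\DDD$. Fortunately this one direction is exactly what your chain ``tree Carleson $\Rightarrow$ disc Carleson'' needs, so your sufficiency argument survives; and since you handle necessity directly by duality (giving $C_{\mathrm{best}}(\mu)\le[\mu]$), you never actually need the reverse inclusion $CM(\DDD)\subset CM(\DDD_\TTT)$. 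The paper does prove that reverse inclusion, but via a separate duality computation with $\frac{d}{dw}K(z,w)$ and the pointwise positivity in (c1)--(c2), \emph{not} via any norm equivalence on holomorphic functions---so your phrasing ``becomes equivalent to'' overstates what is available and what is required.
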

The actual result in \cite{ARS1} is stated differently. There, it is shown that 
$\mu\in CM(\DDD)$ if and only if $\mu$ is finite and
\begin{equation}\label{eqtesting}
\int_{S(\zeta)}\mu(\overline{S(z)}\cap \overline{S(\zeta)})^2\frac{dA(z)}{(1-|z|^2)^2}\le C(\mu)\mu(\overline{S(\zeta)}),
\end{equation}
with $[\mu]_{CM(\mu)}\approx C_{\mbox{best}}(\mu)+\mu(\overline{\DD})$. The equivalence between 
these two conditions will be discussed below, when we will have at our disposal the 
simple language of trees.

\proof[Proof discussion] The basic tools are a duality argument and two weight 
inequalities for positive kernels. It is instructive to enter in some detail the 
duality arguments. The definition of Carleson measure says that the imbedding
$$
Id:\ \DDD\hookrightarrow L^2(\mu) 
$$
is bounded. Passing to the adjoint $\Theta=Id^*$, this is equivalent to the boundedness of
$$
\Theta:\ \DDD\hookleftarrow L^2(\mu).
$$
The adjoint makes ``unstructured'' $L^2(\mu)$ functions into holomorphic functions,
so we expect it to be more manageable. Using the reproducing kernel property,
we see that, for $g\in L^2(\mu)$
\begin{eqnarray}\label{firstdualitystep}
\Theta g(\zeta)&=&\langle \Theta g,K_\zeta\rangle_\DDD\crcr
&=&\langle g,K_\zeta\rangle_{L^2(\mu)}\crcr
&=&\int_{\overline{\DD}}g(z)K_z(\zeta)d\mu(z),
\end{eqnarray}
because $\overline{K_\zeta(z)}=K_z(\zeta)$. We now insert \eqref{firstdualitystep} in 
the boundedness property of $\Theta g$:
\begin{eqnarray*}
 C(\mu)\int_{\overline{\DD}}|g|^2d\mu&\ge&\|\Theta g\|_\DDD^2\crcr
&=&\left\langle\int_{\overline{\DD}}g(z)K_z(\cdot)d\mu(z),
\int_{\overline{\DD}}g(w)K_w(\cdot)d\mu(w)\right\rangle_\DDD\crcr
&=&\int_{\overline{\DD}}g(z)\int_{\overline{\DD}}\overline{g(w)}d\mu(w)\langle K_z,K_w\rangle_\DDD d\mu(z)\crcr
&=&\int_{\overline{\DD}}g(z)\int_{\overline{\DD}}\overline{g(w)}d\mu(w)K_z(w)d\mu(z).
\end{eqnarray*}
Overall, we have that the measure $\mu$ is Carleson for $\DDD$ if and only if the 
weighted quadratic inequality
\begin{equation}\label{seconddualitystep}
 \int_{\overline{\DD}}g(z)\int_{\overline{\DD}}\overline{g(w)}d\mu(w)K_z(w)d\mu(z)\le
C(\mu)\int_{\overline{\DD}}|g|^2d\mu
\end{equation}
holds. Recalling that $k(z,w)=\Re K_z(w)$, it is clear that \eqref{seconddualitystep} implies
\begin{equation}\label{thirddualitystep}
\int_{\overline{\DD}}g(z)\int_{\overline{\DD}}g(w)d\mu(w)k(z,w)d\mu(z)\le
C(\mu)\int_{\overline{\DD}}|g|^2d\mu,
\end{equation}
for real valued $g$ and that, vice-versa, \eqref{thirddualitystep} for real valued $g$ implies
\eqref{seconddualitystep}, with a twice larger constant: 
$\|\Theta (g_1+i g_2)\|_{\DDD}^2\le2(\|\Theta g_1\|_{\DDD}^2+\|\Theta g_2\|_{\DDD}^2)$. 
The same reasoning says that $\mu$ is Carleson if and only if \eqref{thirddualitystep}
holds for positive $g$'s since the problem is reduced to a weighted inequality for a real (positive, 
in fact), symmetric kernel $k$. Condition \eqref{eqnewtesting} is obtained by testing 
\eqref{thirddualitystep} over functions of the form $g=\chi_{\overline{S(\zeta)}}$. The 
finiteness of $\mu$ follows by testing the imbedding $\DDD\hookrightarrow L^2(\mu)$ on the
function $f\equiv1$.

The hard part is proving the sufficiency of \eqref{eqnewtesting}: see \cite{ARS2}, \cite{ARS3}, \cite{ARS1},  
 \cite{KaVe}, \cite{Tch} for different approaches to the problem. See also the very recent \cite{VoWi} for an approach covering the full range of the weighted Dirichlet spaces  in the unit ball of $\CC^n$, between unweighted Dirichlet and Hardy.
\endproof

The reasoning above works the same way with all reproducing kernels (provided the integrals 
involved make sense, of course). In particular, the problem of finding the Carleson measures
for a RKHS reduces, in general, to a weighted quadratic inequality like \eqref{thirddualitystep},
with positive $g$'s.

\subsubsection{A Family of Necessary and Sufficient Testing Conditions.} Condition 
\eqref{newtestingcondition} is the endpoint of a family of such conditions, and the 
quadratic inequality \eqref{thirddualitystep} is the endpoint of a corresponding family of 
quadratic inequalities equivalent to the membership of $\mu$ to the Carleson class.

The kernels $K$ and $k=\Re K$ define positive operators on $\DDD$, hence, by general Hilbert space theory, the boundedness in the inequality
$$
\|\Theta g\|_\DDD^2\le C(\mu)\|g\|_{L^2(\mu)}^2
$$
is equivalent to the boundedness of the operator 
$S:g\mapsto Sf=\int_{\overline{\DD}}k(\cdot,w)g(w)d\mu(w)$ on $L^2(\mu)$, i.e., to
\begin{equation}\label{fourthdualitystep}
\int_{\overline{\DD}}\left(\int_{\overline{\DD}}k(z,w)g(w)d\mu(w)\right)^2d\mu(z)
\le C(\mu)\int_{\overline{\DD}}g^2d\mu,
\end{equation}
with the same constant $C(\mu)$. Testing \eqref{fourthdualitystep} on 
$g=\chi_{\overline{S(\zeta)}}$ and restricting, we have the new testing condition
\begin{equation}\label{newnewtestingcondition}
\int_{\overline{S(\zeta)}}\left(\int_{\overline{S(\zeta)}}k(z,w)d\mu(w)\right)^2d\mu(z)
\le C(\mu)\mu(\overline{S(\zeta)}).
\end{equation}
Observe that, by Jensen's inequality, \eqref{newnewtestingcondition} is \it a priori \rm 
stronger than 
\eqref{newtestingcondition}, although, by the preceding considerations, it is equivalent to it.
Assuming the viewpoint that \eqref{fourthdualitystep} represents the $L^2(\mu)\to L^2(\mu)$
inequality for the ``singular integral operator'' having kernel $k$, and using 
sophisticated machinery used to solve the Painlev\'e problem,
Tchoundja \cite{Tch} pushed this kind of analysis much further. 
Using also results in \cite{ARS2}, he was able to prove the following.
\begin{theorem}[Tchoundja, \cite{Tch}]
\label{tchoundja}
Each of the following conditions on a finite measure $\mu$ 
is equivalent to the fact that $\mu\in CM(\DDD)$:
\begin{itemize}
\item For some $p\in(1,\infty)$ the following inequality holds,
\begin{equation}\label{streamone}
\int_{\overline{\DD}}\left(\int_{\overline{\DD}}k(z,w)g(w)d\mu(w)\right)^pd\mu(z)
\le C_p(\mu)\int_{\overline{\DD}}g^pd\mu.
\end{equation}
\item Inequality \eqref{streamone} holds for all $p\in(1,\infty)$.
\item For some $p\in[1,\infty)$ the following testing condition holds,
\begin{equation}\label{streamtwo}
\int_{\overline{S(\zeta)}}\left(\int_{\overline{S(\zeta)}}k(z,w)d\mu(w)\right)^pd\mu(z)
\le C_p(\mu)\mu(\overline{S(\zeta)}).
\end{equation}
\item The testing condition \eqref{streamtwo} holds for all $p\in[1,\infty)$.
\end{itemize}
\end{theorem}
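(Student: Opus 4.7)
The plan is to reduce the four-way equivalence to a single non-trivial implication by assembling easy and known pieces. Trivially, each ``for all $p$'' statement implies its ``for some $p$'' counterpart, and testing \eqref{streamone} against $g=\chi_{\overline{S(\zeta)}}$ produces \eqref{streamtwo} at the same $p\in(1,\infty)$. At the endpoint $p=1$, condition \eqref{streamtwo} is literally \eqref{eqnewtesting}, so by Theorem \ref{newtestingcondition} it is already equivalent to $\mu\in CM(\DDD)$. At the endpoint $p=2$, the duality computation \eqref{firstdualitystep}--\eqref{fourthdualitystep} already shows that \eqref{streamone} is equivalent to $\mu\in CM(\DDD)$. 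All remaining equivalences will fall into place once one shows that \eqref{streamtwo} at \emph{some} $p\in[1,\infty)$ forces \eqref{streamone} at \emph{every} $p\in(1,\infty)$.

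This is a non-homogeneous $T(1)$-type assertion for the positive, symmetric, logarithmic kernel $k(z,w)\asymp\log\frac{2}{|1-\overline{z}w|}$. I would proceed in three stages. First, establish size and H\"older-smoothness estimates for $k$ with respect to the Carleson distance $\delta(z,w)=|1-\overline{z}w|$ on $\overline{\DD}$, placing the operator $Sg(z)=\int_{\overline{\DD}}k(z,w)g(w)d\mu(w)$ inside a Calder\'on--Zygmund framework of zero-dimensional homogeneity. Second, observe that \eqref{streamtwo} simultaneously encodes both a ``$T(1)\in\mathrm{BMO}$''-type testing condition and a growth restriction on $\mu$ (morally $\mu(\overline{S(\zeta)})\log\frac{2}{1-|\zeta|}\lesssim 1$), which together feed into a non-homogeneous $T(1)$ theorem in the NTV / Tolsa tradition and produce the $L^p(\mu)$-boundedness of $S$ at the chosen $p$. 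Third, for a \emph{positive}, symmetric kernel the resulting boundedness at one exponent can be upgraded to every $p\in(1,\infty)$: one extracts $L^2(\mu)$-boundedness via a Sawyer-type two-weight testing characterization and then extrapolates by a Calder\'on--Zygmund decomposition plus good-$\lambda$ argument, yielding \eqref{streamone} for the full range.

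The main obstacle is the second stage. The standard NTV / Tolsa $T(1)$ machinery was designed for Cauchy- or Riesz-type kernels with polynomial decay, while $k$ here is logarithmic, so the usual Menger-curvature / good-$\lambda$ technology from the Painlev\'e problem has no direct analogue. The needed substitute is a positive quadratic form adapted to the logarithmic kernel, together with a reduction of the geometric analysis to a combinatorially tractable problem on the dyadic tree model of Carleson boxes from \cite{ARS2}. Carrying out this adaptation, and verifying that the resulting ``curvature'' estimate is controlled by \eqref{streamtwo}, is precisely the technical contribution of \cite{Tch}.
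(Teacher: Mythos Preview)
Your proposal is correct and matches the approach the paper indicates. The paper does not give its own proof of this theorem; it attributes the result to Tchoundja, noting only that he treated \eqref{fourthdualitystep} as an $L^2(\mu)\to L^2(\mu)$ inequality for a singular integral operator and applied ``sophisticated machinery used to solve the Painlev\'e problem'' together with results from \cite{ARS2}, and that the $p=1$ endpoint is \cite{ARS2}'s Theorem~\ref{newtestingcondition}. Your reduction to the single implication ``\eqref{streamtwo} at some $p$ $\Rightarrow$ \eqref{streamone} at all $p$'' via the easy testing and duality observations is exactly right, and your three-stage plan (Calder\'on--Zygmund estimates for the logarithmic kernel in the quasimetric $|1-\overline{z}w|$, a non-homogeneous $T(1)$ argument in the NTV/Tolsa style, then extrapolation in $p$) is precisely the content of \cite{Tch}, whose title already announces the $T(1)$ viewpoint.
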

Actually, Tchoundja deals with different spaces of holomorphic functions, but his results
extend to the Dirichlet case. As mentioned earlier, the 
$p=1$ endpoint of Theorem \ref{tchoundja} is in \cite{ARS2}.
\subsubsection{Another Family of Testing Conditions} It was proved in \cite{ARS1} 
that a measure $\mu$ on $\overline{\DD}$ is Carleson for $\DDD$ if and only if
\eqref{eqtesting} holds. In \cite{KS2}, Kerman and Sawyer had found another, seemingly weaker,
necessary and sufficient condition. In order to compare the two conditions, we restate 
\eqref{eqtesting} differently. Let $I(z)=\partial S(z)\cap\partial\DD$. 
For $\theta\in I(z)$ and $s\in[0,1-|z|]$, let $S(\theta,s)=S((1-s)e^{i\theta})$. Condition 
\eqref{eqtesting} is easily seen to be equivalent to have, for all $z\in\DD$,
\begin{equation}\label{neweqtesting}
\int_{I(z)}
\int_0^{1-|z|}\left(\frac{\mu(S(z)\cap\mu(S(\theta,s)))}{s^{1/2}}\right)^2\frac{ds}{s}d\theta
\le C(\mu)\mu(S(z)).
\end{equation}
Kerman and Sawyer proved that $\mu$ is a Carleson measure for $\DDD$ if and only if
for all $z\in\DD$,
\begin{equation}\label{kseqtesting}
\int_{I(z)}
\sup_{s\in(0,1-|z|]}\left(\frac{\mu(S(z)\cap\mu(S(\theta,s)))}{s^{1/2}}\right)^2d\theta
\le C(\mu)\mu(S(z)).
\end{equation}
Now, the quantity inside the integral on the left hand side of \eqref{kseqtesting} is smaller than the 
corresponding quantity in \eqref{neweqtesting}. Due to the presence of the measure
$ds/s$ and the fact that the quantity $\mu(S(\theta,s))$ changes regularly with $\theta$ fixed
and $s$ variable the domination of the left hand side \eqref{kseqtesting} by that of \eqref{neweqtesting} 
comes from the imbedding 
$\ell^2\subseteq\ell^\infty$. The fact that, ``on average'', the inclusion can be reversed is
at first surprising. In fact, it is a consequence of the Muckenhoupt-Wheeden inequality \cite{MW} 
(or an extension of it), that the quantities on the left hand side of \eqref{neweqtesting} and 
\eqref{kseqtesting} are equivalent.
\begin{theorem}[\cite{ARS1} \cite{KS2}]
\label{ARSKS}
A measure $\mu$ on $\overline{\DD}$ is Carleson for the Dirichlet space $\DDD$ if and only if it
is finite and for some $p\in[1,\infty]$ (or, which is the same, {\rm for all} $p\in[1,\infty]$)
and all $z\in\DD$:
\begin{equation}\label{pcondition}
\int_{I(z)}
\left[\int_0^{1-|z|}\left(\frac{\mu(S(z)\cap\mu(S(\theta,s)))}{s^{1/2}}\right)^p\right]^{2/p}
\frac{ds}{s}d\theta\le C(\mu)\mu(S(z)).
\end{equation}
\end{theorem}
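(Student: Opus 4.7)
The plan is to bootstrap from two endpoint cases already in the excerpt. The $p=2$ case of \eqref{pcondition} is, after identifying $dA(w)/(1-|w|^{2})^{2}$ with $ds\,d\theta/s$ in polar-like coordinates over $S(z)$, exactly the Arcozzi-Rochberg-Sawyer condition \eqref{eqtesting} rewritten as \eqref{neweqtesting}, which by Theorem~\ref{newtestingcondition} characterizes $\mu\in CM(\DDD)$. The $p=\infty$ case is precisely the Kerman-Sawyer condition \eqref{kseqtesting}, which also characterizes $\mu\in CM(\DDD)$. The two endpoints are thus equivalent to the Carleson property and to each other, and the content of Theorem~\ref{ARSKS} is that every intermediate $p\in(1,\infty)$ fits in the same equivalence class.

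The implication \eqref{pcondition}$\Rightarrow\mu\in CM(\DDD)$ is the easier one and the same pointwise argument handles every $p$. Fix $z$ and $\theta\in I(z)$, set $f(s)=\mu(S(z)\cap S(\theta,s))/s^{1/2}$. Since $s\mapsto\mu(S(z)\cap S(\theta,s))$ is monotone increasing, for $s\in[s_{0},2s_{0}]$ one has $f(s)\ge f(s_{0})/\sqrt{2}$, so $\int_{0}^{1-|z|}f^{p}\,ds/s\ge c_{p}f(s_{0})^{p}$ uniformly in $s_{0}$. Taking the sup in $s_{0}$ and the $2/p$-th power yields $\sup_{s}f(s)^{2}\lesssim_{p}\bigl(\int_{0}^{1-|z|}f^{p}\,ds/s\bigr)^{2/p}$ pointwise in $\theta$. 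Integrating in $\theta$ shows that \eqref{kseqtesting} is dominated by \eqref{pcondition}, and the Kerman-Sawyer theorem finishes this direction.

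For $\mu\in CM(\DDD)\Rightarrow$ \eqref{pcondition}, I would establish the endpoints $p=1$ and $p=\infty$ and interpolate. The case $p=\infty$ is exactly Kerman-Sawyer. For $p=1$, the identity $|\theta-\eta|^{-1/2}\approx\int_{|\theta-\eta|}^{\infty}s^{-3/2}\,ds$ identifies $\int_{0}^{1-|z|}f(s)\,ds/s$, up to constants, with a truncation of the fractional Riesz integral $I_{1/2}(\mathbf{1}_{S(z)}\mu)(\theta)$, while $\sup_{s}f(s)$ is the corresponding fractional maximal function $M_{1/2}(\mathbf{1}_{S(z)}\mu)(\theta)$. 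The Muckenhoupt-Wheeden inequality \cite{MW}, applied with Lebesgue measure on $I(z)$ as a (trivially $A_\infty$) weight at exponent $2$, gives
$$
\int_{I(z)}\bigl(I_{1/2}(\mathbf{1}_{S(z)}\mu)(\theta)\bigr)^{2}d\theta\,\lesssim\,\int_{I(z)}\bigl(M_{1/2}(\mathbf{1}_{S(z)}\mu)(\theta)\bigr)^{2}d\theta\,\lesssim\,\mu(S(z)),
$$
the second bound by Kerman-Sawyer. This is \eqref{pcondition} for $p=1$. For $p\in(1,\infty)$, log-convexity of $L^{p}(ds/s)$ norms gives $\|f\|_{L^{p}(ds/s)}^{2}\le\|f\|_{L^{1}(ds/s)}^{2/p}\|f\|_{\infty}^{2(1-1/p)}$ pointwise in $\theta$; integrating and applying H\"older in $\theta$ with exponents $p$ and $p/(p-1)$ gives
$$
\int_{I(z)}\|f\|_{L^{p}(ds/s)}^{2}d\theta\le\Bigl(\int_{I(z)}\|f\|_{L^{1}(ds/s)}^{2}d\theta\Bigr)^{1/p}\Bigl(\int_{I(z)}\|f\|_{\infty}^{2}d\theta\Bigr)^{(p-1)/p}\lesssim\mu(S(z)),
$$
which closes the general case.

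The main obstacle is the Muckenhoupt-Wheeden step, and specifically the geometric identification of the integrals appearing in \eqref{pcondition} with the one-dimensional fractional Riesz integral and fractional maximal function of $\mathbf{1}_{S(z)}\mu$ on the arc $I(z)$, with implicit constants independent of $S(z)$. The base arc of $S(\theta,s)$ is an interval of length $\approx s$ inside $I(z)$, so the geometry cooperates, but a uniform comparability still has to be verified. A cleaner route, aligned with later sections of the survey, is to replace the continuous MW estimate by its dyadic analogue on the tree model for $\DD$; the equivalence between \eqref{neweqtesting} and \eqref{kseqtesting} then reduces to a discrete two-weight inequality amenable to a direct stopping-time argument.
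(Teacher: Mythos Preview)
Your proposal is correct and follows essentially the same route as the paper: the endpoints $p=2$ (condition \eqref{neweqtesting} from \cite{ARS1}) and $p=\infty$ (condition \eqref{kseqtesting} from \cite{KS2}) are each known to characterize $CM(\DDD)$, the trivial monotonicity direction is exactly the $\ell^p\subset\ell^\infty$ embedding the paper cites, and the nontrivial reverse domination is obtained via the Muckenhoupt--Wheeden inequality, with the tree/dyadic route of \cite{AR} as the cleaner alternative you also suggest. Your explicit log-convexity/H\"older interpolation between the $p=1$ and $p=\infty$ endpoints fills in a detail the paper leaves implicit, and your caveat about the geometric identification of $\int_0^{1-|z|}f\,ds/s$ and $\sup_s f$ with one-dimensional $I_{1/2}$ and $M_{1/2}$ (the measure $\mathbf{1}_{S(z)}\mu$ lives on the disk, not the arc) is exactly the reason the paper hedges with ``or an extension of it''.
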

The inequality of Muckenhoupt and Wheeden was independently rediscovered by T. Wolff \cite{HW}, 
with a completely new proof. Years later, trying to understand why the conditions in 
\cite{ARS1} and \cite{KS2} where equivalent, although seemingly different, in \cite{AR} the 
authors, unaware of the results in \cite{HW} and \cite{MW}, found another (direct) proof
of the inequality.

\section{The Tree Model} 
\subsection{The Bergman Tree}
The unit disc $\DD$ can be discretized into Whitney boxes. The set of 
such boxes has a natural tree structure. In this section, we want to explain how analysis on
the holomorphic Dirichlet space is related to analysis on similar spaces on the tree, and not
only on a metaphoric level.

For integer $n\ge0$ and $1\le k\le 2^n$, consider the regions
$$
Q(n,k)=\left\{z=re^{i\theta}\in\DD:\ 2^{-n}\le1-|z|<2^{-n-1},\ 
\frac{k}{2^n}\le\frac{\theta}{2\pi}<\frac{k+1}{2^n}\right\}.
$$
Let $\TTT$ be the set of the indices $\alpha=(n,k)$. Sometimes we will identify the index $\alpha$ with the region $Q(\alpha)$.
The regions indexed by $\TTT$ form a partition of the unit disc
$\DD$ in regions, whose Euclidean diameter, Euclidean in-radius, and Euclidean distance to the 
boundary are comparable to each other, with constants independent of the considered region. 
An easy exercise in hyperbolic geometry shows that the regions
$\alpha\in\TTT$ have approximatively the same hyperbolic diameter and hyperbolic in-radius.
We give the set $\TTT$ two geometric-combinatorial structures: a \it tree structure, \rm in 
which there is an edge between $\alpha$ and $\beta$ when the corresponding regions share an 
arc of a circle; a \it graph structure, \rm in which there is an edge between $\alpha$ and 
$\beta$  if the closures of the corresponding regions have some point of $\DD$ in common. When 
referring to the graph structure, we write $\GGG$ instead of $\TTT$. 

In the tree $\TTT$, we choose a distinguished point $o=\alpha(0,1)$, the \it root \rm of $\TTT$.
The distance $d_\TTT(\alpha,\beta)$
 between two points $\alpha,\beta$ in $\TTT$ is the minimum number of edges of $\TTT$
one has to travel going from the vertex $\alpha$ to the vertex $\beta$. Clearly, there 
is a unique path from $\alpha$ to $\beta$ having minimal length: it is the \it geodesic \rm 
$[\alpha,\beta]$ between $\alpha$ and $\beta$, which we consider as a set of points. 
The choice of the root gives $\TTT$ a partial order structure: $\alpha\le\beta$ if 
$\alpha\in[o,\beta]$. The \it parent \rm of $\alpha\in\TTT\setminus\{o\}$ is the point 
$\alpha^{-1}$ on $[o,\alpha]$ such that $d(\alpha,\alpha^{-1})=1$. Each point $\alpha$ is the parent 
of two points in $\TTT$ (its \it children\rm), labeled when necessary as $\alpha^\pm$.
The natural geometry on $\TTT$ is a simplified version of the hyperbolic geometry of the disc.

We might define a distance $d_\GGG$ on the graph $\GGG$ using edges of $\GGG$ instead of 
edges of $\TTT$. The distance $d_\GGG$ is realized by geodesics, although we do not have 
uniqueness anymore. However, we have ``almost uniqueness'' in this case, two geodesic between $\alpha$ and $\beta$ maintain a reciprocal distance which is bounded by a positive constant $C$, independent
of $\alpha$ and $\beta$. The following facts are rather easy to prove:
\begin{enumerate}
\item[(1)] $d_\GGG(\alpha,\beta)\le d_\TTT(\alpha,\beta)$;
\item[(2)] If $z\in\alpha$ and $w\in\beta$, then 
$d_\GGG(\alpha,\beta)+1\approx d(z,w)+1$: the graph metric is roughly the hyperbolic metric
at unit scale;
\item[(3)] There are sequences $\{\alpha_n\},\ \{\beta_n\}$ such that
$d_\TTT(\alpha_n,\beta_n)/d_\GGG(\alpha_n,\beta_n)\to\infty$ as $n\to\infty$.  This says there are points which are close in the graph, but far away in the tree.
\end{enumerate}
While the graph geometry is a good approximation of the hyperbolic geometry at a fixed scale,
the same can not be said about the tree geometry. Nevertheless, the tree geometry is much
more elementary, and it is that we are going to use. It is a bit surprising that, in spite 
of the distortion of the hyperbolic metric pointed out in (3), the tree geometry is so useful.

Let us introduce the analogs of cones and Carleson boxes on the tree: 
$\PPP(\alpha)=[o,\alpha]\subset\TTT$ is the \it predecessor set \rm of $\alpha\in\TTT$ 
(when you try to picture it, you get a sort of cone) and 
$\SSS(\alpha)=\{\beta\in\TTT:\ \alpha\in\PPP(\alpha)\}$, its dual object, is the \it
successor set \rm of $\alpha$ (a sort of Carleson box).

Given $\alpha,\beta\in\TTT$, we denote their \it confluent \rm by $\alpha\wedge\beta$.  This is  the point
on the geodesic between $\alpha$ and $\beta$ which is closest to the root $o$. That is,
$$
\PPP(\alpha\wedge\beta)=\PPP(\alpha)\cap\PPP(\beta).
$$
In terms of $\DD$ geometry, the confluent corresponds to the highest point of the smallest Carleson box
containing two points; if $z,w\in\DD$ are the points, the point which plays the r\^ole
of $\alpha\wedge\beta$ is roughly the point having argument halfway between that of $z$ and
that of $w$, and having Euclidean distance $|1-z\overline{w}|$ from the boundary.

\subsection{Detour: The Boundary of the Tree and its Relation with the Disc's Boundary.}
The distortion of the metric induced by the tree structure has an interesting 
effect on the boundary. One can define a boundary $\partial\TTT$ of the tree $\TTT$. While
the boundary of $\DD$ (which we might think of as a boundary for the graph $\GGG$) is connected,
the boundary $\partial\TTT$ is totally disconnected; it is in fact homeomorphic to a Cantor set.
Notions of boundaries for graphs, and trees in particular, are an old and useful topic 
in probability and potential theory. We mention \cite{Saw} as a nice introduction to this topic.

We will see promptly that the boundary $\partial\TTT$ is compact with respect to a natural metric and that, as such, it carries positive Borel measures. Furthermore, if $\mu$ is positive Borel measure without atoms with support on $\partial\DD$, then it can be identified with a positive Borel measure without atoms on $\partial\TTT$.

This is the main reason we are interested in trees and a tree's boundary. Some theorems are 
easier to prove on the tree's boundary, some estimates become more transparent and some
objects are easier to picture. Often, it is possible to split a problem in two parts: a ``soft''
part, to deal with in the disc geometry, and a ``hard'' combinatorial part, which one can
formulate and solve in the easier tree geometry.
Many of these results and objects can then be transplanted in
the context of the Dirichlet space.

\smallskip

As a set, the boundary $\partial\TTT$ contains as elements the half-infinite geodesics on $\TTT$, 
having $o$ as endpoint. For convenience, we think of $\zeta\in\partial\TTT$ as of a point 
and we denote by $\PPP(\zeta)=[o,\zeta)\subset\TTT$ the geodesic labeled by $\zeta$. We 
introduce on $\partial\TTT$ a metric which mimics the Euclidean metric on the circle:
$$
\delta_\TTT(\zeta,\xi)=2^{-d_\TTT(\zeta\wedge\xi)},
$$
where $\zeta\wedge\xi$ is defined as in the ``finite'' case $\alpha,\beta\in\TTT$: 
$\PPP(\zeta\wedge\xi)=\PPP(\zeta)\cap\PPP(\xi)$. It is easily verified that, modulo a 
multiplicative constant, $\delta_\TTT$ is the weighted length of the doubly infinite geodesic 
$\gamma(\zeta,\xi)$ which joins $\zeta$ and $\xi$, where the weight assigns to each edge
$[\alpha,\alpha^{-1}]$ the number $2^{-d_\TTT(\alpha)}$. The metric can be extended 
to $\overline{\TTT}=\TTT\cup\partial\TTT$ by similarly measuring a geodesics' lengths for 
all geodesics. This way, we obtain a compact metric space $(\overline{\TTT},\delta_\TTT)$, where
$\TTT$ is a discrete subset of $\overline{\TTT}$, having $\partial\TTT$ as metric boundary. 
\it The subset $\partial\TTT$, 
as we said before, turns out to be a totally disconnected, perfect set. \rm

The relationship between $\partial\TTT$ and $\partial\DD$ is more than metaphoric. Given a point 
$\zeta\in\partial\TTT$, let $\PPP(\zeta)=\{\zeta_n:\ n\in\NN\}$ be an enumeration of the points
$\zeta_n\in\TTT$ of the corresponding geodesic, ordered in such a way that $d(\zeta_n,o)=n$. 
Each $\alpha$ in $\TTT$ can be identified with a dyadic sub-arc of $\partial\DD$. If $Q(\alpha)$ 
is the Whitney box labeled by $\alpha=(n,k)$, let
$$
S(\alpha)=\left\{z=re^{i\theta}\in\DD:\ 2^{-n}\le1-|z|,\ 
\frac{k}{2^n}\le\frac{\theta}{2\pi}<\frac{k+1}{2^n}\right\}
$$
be the corresponding \it Carleson box\rm. Consider the arc 
$I(\alpha)\partial S(\alpha)\cap\partial\DD$ and
define the map
$\Lambda:\partial\TTT\to\partial\DD$,
\begin{equation}\label{lambda}
\Lambda(\zeta)=\bigcap_{n\in\NN}I(\zeta_n).
\end{equation}
It is easily verified that $\Lambda$ is a Lipschitz continuous map of $\partial\TTT$ onto 
$\partial\DD$, which fails to be injective at a countable set (the set of the dyadic rationals 
$\times 2\pi$). More important is the (elementary, but not obvious) fact that $\Lambda$ maps
Borel measurable sets in $\partial\TTT$ to Borel measurable sets in $\partial\DD$. This allows
us to move Borel measures back and forth from $\partial\TTT$ to $\partial\DD$.

Given a positive Borel measure $\omega$ on $\partial \TTT$, let $(\Lambda_*\omega)(E)=\omega(\Lambda^{-1}(E))$ be the usual push-forward measure. Given a positive Borel measure $\mu$ on $\SS$, define its pull-back  $\Lambda^*\mu$ to be the positive Borel measure on $\partial \TTT$\rm
\begin{equation}\label{pulback}
(\Lambda^*\mu)(F)=\int_\SS\frac{\sharp(\Lambda^{-1}(\theta)\cap A)}{\sharp(\Lambda^{-1}(\theta))}d\mu(e^{i\theta}).
\end{equation}
\begin{proposition}
\label{lambdatwo}
\begin{enumerate}
\item[]
 \item[(i)] The integrand in \eqref{pulback} is measurable, hence the integral is well-defined; 
 \item[(ii)] $\Lambda_*(\Lambda^*(\mu))=\mu$;
 \item[(iii)] For any closed subset $A$ of $\SS$, $\Lambda_*\omega(A)=\omega(\Lambda^{-1}(A))$, by definition;
 \item[(iv)] For any closed subset $B$ of $\partial\TTT$, $\Lambda^*\mu(B)\approx\mu(\Lambda(B))$;
 \item[(v)] In \textnormal{(iv)}, we have equality if the measure $\mu$ has no atoms.
\end{enumerate}
\end{proposition}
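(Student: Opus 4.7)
The whole proposition turns on one structural fact: $\Lambda$ is at most two-to-one, with $\#\Lambda^{-1}(\theta)=2$ exactly on the countable set $D\subset\SS$ of dyadic endpoints and $\#\Lambda^{-1}(\theta)=1$ everywhere else. Consequently the integrand
$$
F_B(\theta):=\frac{\#(\Lambda^{-1}(\theta)\cap B)}{\#\Lambda^{-1}(\theta)}
$$
always takes values in $\{0,\tfrac12,1\}$. For part (i), I would split $\SS=D\sqcup(\SS\setminus D)$. Since $D$ is countable it is Borel and $F_A|_D$ is trivially Borel. On $\SS\setminus D$ the map $\Lambda$ is a bijection onto its image and $F_A=\chi_{\Lambda(A)}$, which is Borel because $\Lambda$ sends Borel sets to Borel sets, as stated just before the proposition. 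Gluing, $F_A$ is Borel measurable. Part (iii) is simply the definition of the pushforward and requires no further argument.

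For (ii), fix a Borel set $E\subseteq\SS$ and unravel definitions:
$$
\Lambda_*(\Lambda^*\mu)(E)=(\Lambda^*\mu)(\Lambda^{-1}E)=\int_\SS F_{\Lambda^{-1}E}(\theta)\,d\mu(\theta).
$$
If $\theta\in E$ then every preimage of $\theta$ lies in $\Lambda^{-1}(E)$, so $\Lambda^{-1}(\theta)\cap\Lambda^{-1}(E)=\Lambda^{-1}(\theta)$ and $F_{\Lambda^{-1}E}(\theta)=1$; if $\theta\notin E$ the intersection is empty and $F_{\Lambda^{-1}E}(\theta)=0$. Hence the integrand equals $\chi_E$ pointwise and the integral is $\mu(E)$.

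For (iv) and (v), fix a closed set $B\subseteq\partial\TTT$; compactness of $\partial\TTT$ and continuity of $\Lambda$ make $\Lambda(B)$ closed, in particular Borel, so the right-hand side $\mu(\Lambda B)$ is well defined. If $\theta\in\Lambda(B)$ some preimage lies in $B$, so $F_B(\theta)\in\{\tfrac12,1\}$; if $\theta\notin\Lambda(B)$ then $F_B(\theta)=0$. This gives the two-sided pinch
$$
\tfrac12\,\chi_{\Lambda(B)}\ \le\ F_B\ \le\ \chi_{\Lambda(B)},
$$
and integrating against $\mu$ yields (iv). For (v), atomlessness of $\mu$ forces $\mu(D)=0$ because $D$ is countable; off $D$ the preimage is a single point, so $F_B=\chi_{\Lambda(B)}$ identically there, and integration now produces exact equality.

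The only substantive input is the assertion used in (i) that $\Lambda$ pushes Borel sets to Borel sets. This is the main obstacle; the paper itself flags it as ``elementary but not obvious,'' and indeed only (iv) and (v)—which work with closed $B$—can sidestep it, since closed sets are automatically carried to closed sets by continuous maps on compact spaces.
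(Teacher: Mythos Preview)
Your argument is correct. Note, however, that the paper does not actually supply a proof of this proposition; it merely points the reader to \cite{ARSW3} for more general versions. So there is no ``paper's own proof'' to compare against, and your write-up stands on its own.

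A few small comments. In part~(i) you switch notation from $F_B$ to $F_A$; keep it consistent, since the set lives in $\partial\TTT$, not $\SS$. Your phrase ``on $\SS\setminus D$ the map $\Lambda$ is a bijection onto its image'' is slightly loose: what you mean is that $\Lambda$ restricted to $\Lambda^{-1}(\SS\setminus D)$ is a bijection onto $\SS\setminus D$. The substance is fine. Your observation at the end is apt: the Borel-to-Borel property of $\Lambda$ is the only nontrivial input, and the paper explicitly grants it just before the proposition, so you are entitled to use it. For (iv) your pinching constants are $1/2$ and $1$, which makes the $\approx$ explicit and is exactly what is wanted.
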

See \cite{ARSW3} for more general versions of the proposition.
\subsection{A Version of the Dirichlet Space on the Tree}  Consider the Hardy-type operator
$\III$ acting on functions $\varphi:\TTT\to\RR$,
$$
\III\varphi(\alpha)=\sum_{\beta\in \PPP(\alpha)}\varphi(\beta).
$$
The \it Dirichlet space \rm $\DDD_\TTT$ on $\TTT$ is the space of the functions 
$\Phi=\III\varphi$, $\varphi\in\ell^2(\TTT)$, with norm 
$\|\Phi\|_{\DDD_\TTT}=\|\varphi\|_{\ell^2}$. Actually, we will always talk about the space 
$\ell^2$ \it and \rm the operator $\III$, rather than about the space $\DDD_\TTT$, which is 
however the \it trait d'union \rm between the discrete and the continuous theory.

What we are thinking of, in fact, is discretizing a Dirichlet function $f\in\DDD$ in such a way that
\begin{enumerate}
\item[(1)] $\varphi(\alpha)\sim (1-|z(\alpha)|)|f^\prime(z(\alpha))|$, where $z(\alpha)$
is a distinguished point in the region $\alpha$ (or in its closure);
\item[(2)] $\III\varphi(\alpha)=f(\alpha)$.
\end{enumerate}

Let us mention a simple example from \cite{ARS6}, saying that $\ell^2$ is ``larger'' than $\DDD$.
\begin{proposition}\label{restriction} Consider a subset $\{z(\alpha):\ \alpha\in\TTT\}$
of $\DD$, where $z(\alpha)\in\alpha$, and let $f\in\DDD$. Then, there is a function $\varphi$
in $\ell^2(\TTT)$ such that $\III\varphi(\alpha)=f(z(\alpha))$ for all $\alpha\in\TTT$ and
$\|\varphi\|_{\ell^2}\lesssim\|f\|_\DDD$.
\end{proposition}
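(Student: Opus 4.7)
My plan is to define $\varphi$ by telescoping along geodesics from the root and then estimate each ``increment'' by a local integral of $|f'|^2$.

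Concretely, set $\varphi(o) = f(z(o))$ and, for $\alpha \in \TTT \setminus \{o\}$,
\[
\varphi(\alpha) = f(z(\alpha)) - f(z(\alpha^{-1})).
\]
Then $\III\varphi(\alpha) = \sum_{\beta \in \PPP(\alpha)} \varphi(\beta)$ is a telescoping sum equal to $f(z(\alpha))$, so the identity $\III\varphi = f \circ z$ holds by construction. The content of the proposition is therefore the $\ell^2$-estimate $\|\varphi\|_{\ell^2}^2 \lesssim \|f\|_\DDD^2$.

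For the root term, $z(o)$ lies at bounded hyperbolic distance from $0$, so $|f(z(o))|^2 \lesssim \|f\|_\DDD^2$ by the boundedness of point evaluation (equivalently, by \eqref{Holder} and $\|f\|_{H^2} \le \|f\|_\DDD$). For the generic term, I would use that $z(\alpha)$ and $z(\alpha^{-1})$ both lie in a mildly enlarged Whitney region $Q^*(\alpha)$ of Euclidean diameter comparable to $1-|z(\alpha)|$, with bounded hyperbolic diameter. Connecting them by a hyperbolic-length-one curve $\gamma_\alpha \subset Q^*(\alpha)$, the Cauchy--Schwarz inequality combined with the subharmonic mean value estimate
\[
\max_{w \in \gamma_\alpha} |f'(w)|^2 \lesssim \frac{1}{|Q^{**}(\alpha)|} \int_{Q^{**}(\alpha)} |f'(\zeta)|^2 \, dA(\zeta),
\]
valid on a slightly larger box $Q^{**}(\alpha)$ with $|Q^{**}(\alpha)| \approx (1-|z(\alpha)|)^2$ and $\mathrm{length}(\gamma_\alpha) \lesssim 1 - |z(\alpha)|$, yields
\[
|\varphi(\alpha)|^2 \le \Bigl(\int_{\gamma_\alpha}|f'||dw|\Bigr)^2 \lesssim (1-|z(\alpha)|)^2 \cdot \frac{1}{(1-|z(\alpha)|)^2} \int_{Q^{**}(\alpha)} |f'|^2 \, dA = \int_{Q^{**}(\alpha)} |f'|^2 \, dA.
\]

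Finally, since the enlarged regions $\{Q^{**}(\alpha)\}_{\alpha \in \TTT}$ have bounded overlap in $\DD$ (a standard Whitney-type fact: the dilation factor is a fixed constant independent of $\alpha$), summing over $\TTT \setminus \{o\}$ gives
\[
\sum_{\alpha \ne o} |\varphi(\alpha)|^2 \lesssim \int_\DD |f'(w)|^2 \, dA(w) = \|f\|_{\DDD,*}^2 \le \|f\|_\DDD^2,
\]
and combining with the root estimate finishes the proof. The only part requiring care is the geometric setup: choosing the enlargement $Q^{**}(\alpha)$ large enough that $\gamma_\alpha$ stays inside and the subharmonic mean value inequality is applicable on a disc centered at each point of $\gamma_\alpha$ of radius $\sim 1-|z(\alpha)|$, yet small enough to keep the overlap uniformly bounded. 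This is routine but is the only non-formal step.
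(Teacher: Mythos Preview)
Your proof is correct and follows essentially the same route as the paper: define $\varphi$ by telescoping increments $f(z(\alpha))-f(z(\alpha^{-1}))$, control each increment by a local average of $|f'|^2$ over a Whitney-scale neighborhood via the mean value/subharmonicity property, and sum using bounded overlap. The only cosmetic differences are that the paper normalizes $f(0)=0$ instead of treating the root term separately, and invokes a pointwise bound $|f(z(\alpha))-f(z(\alpha^{-1}))|\lesssim(1-|z(\alpha)|)|f'(w(\alpha))|$ at a single intermediate point rather than your line-integral plus Cauchy--Schwarz; your version is in fact slightly more careful on this step.
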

\proof Assume without loss of generality that $f(0)=0$ and let 
$\varphi(\alpha):=f(z(\alpha))-f(z(\alpha^{-1}))$. By telescoping, 
$\varphi(\alpha)=f(z(\alpha))$. To prove the estimate,
\begin{eqnarray*}
 \|h\|_{\ell^2(\TTT)}^p&=&\sum_{\alpha}\left|f(z({\alpha}))-f(z({\alpha^{-1}}))\right|^2\crcr
&\lesssim&\sum_{\alpha}\left|(1-|z(\alpha)|)f^\prime(w(\alpha))\right|^2\crcr
&\ &\mbox{for\ some\ $w(\alpha)$\ in\ the\ closure\ of\ $\alpha$,}\crcr
&\approx&\sum_{\alpha}(1-|z(\alpha)|)^2\left|\frac{1}{(1-|z_\alpha|)^2}
\int_{\zeta\in\DD:\ |\zeta-w(\alpha)|\le(1-|z(\alpha)|)/10}f^\prime(\zeta)dA(\zeta)
\right|^2\crcr
&\ &\mbox{by\ the\ (local)\ Mean Value Property,}\crcr
&\lesssim&
\sum_{\alpha}
\int_{\zeta:\ |\zeta-w(\alpha)|\le(1-|z(\alpha)|)/10}\left|f^\prime(\zeta)\right|^2dA(\zeta)
\crcr
&\ &\mbox{by\ Jensen's\ inequality,}\crcr
&\approx&\|f\|_{\DDD}^2,
\end{eqnarray*}
since the discs $\left\{\zeta:\ |\zeta-w(\alpha)|\le\frac{1-|z(\alpha)|}{10}\right\}$ clearly have bounded 
overlap.
\endproof
\subsection{Carleson Measures on the Tree and on the Disc} Let $\mu$ be a positive 
measure on the closed unit disc. Identify it with a positive measure on $\TTT$ by letting
$$
\mu(\alpha)=\int_{Q(\alpha)} d\mu(z).
$$
\subsubsection{Carleson Measures.} We say that $\mu$ is a \it Carleson measure \rm for $\DDD_\TTT$
if the operator $\III:\ell^2(\TTT)\to\ell^2(\TTT,\mu)$ is bounded. 
We write $\mu\in CM(\DDD_\TTT)$.
\begin{theorem}\label{discretocontinuo}
We have that $CM(\DDD)=CM(\DDD_\TTT)$ with comparable norms.
\end{theorem}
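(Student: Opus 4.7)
\proof[Proof plan] The approach is to characterize both classes by testing conditions of the same shape and then match them term-by-term via the kernel estimate (b) from Section~2.3. On the disc side, Theorem~\ref{newtestingcondition} already asserts that $\mu\in CM(\DDD)$ is equivalent to the finiteness of $\mu$ plus
\begin{equation}\label{disctest}
\int_{\overline{S(\zeta)}}\int_{\overline{S(\zeta)}}k(z,w)\,d\mu(z)\,d\mu(w)\lesssim \mu(\overline{S(\zeta)}),\qquad \zeta\in\DD.
\end{equation}
I would derive the analogous tree testing condition by running the same duality as in \eqref{firstdualitystep}--\eqref{thirddualitystep}, but for the operator $\III:\ell^2(\TTT)\to\ell^2(\TTT,\mu)$. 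Computing $\III^*_\mu g(\beta)=\sum_{\alpha\ge\beta}g(\alpha)\mu(\alpha)$ and expanding $\|\III^*_\mu g\|_{\ell^2(\TTT)}^2$, one finds that
$$
\#\PPP(\alpha_1)\cap\PPP(\alpha_2)=d_\TTT(o,\alpha_1\wedge\alpha_2)+1,
$$
so $\mu\in CM(\DDD_\TTT)$ is equivalent to the weighted quadratic inequality
$$
\sum_{\alpha_1,\alpha_2}g(\alpha_1)g(\alpha_2)\bigl(d_\TTT(o,\alpha_1\wedge\alpha_2)+1\bigr)\mu(\alpha_1)\mu(\alpha_2)\lesssim\sum_{\alpha}g(\alpha)^2\mu(\alpha),
$$
for positive $g$. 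The tree kernel $(\alpha,\beta)\mapsto d_\TTT(o,\alpha\wedge\beta)+1$ is positive, symmetric and monotone along predecessors, so the same testing reduction used for Theorem~\ref{newtestingcondition} (specialized to the tree, which is really where those arguments originated) yields the tree testing condition
\begin{equation}\label{treetest}
\sum_{\alpha,\beta\in\SSS(\gamma)}\bigl(d_\TTT(o,\alpha\wedge\beta)+1\bigr)\mu(\alpha)\mu(\beta)\lesssim \mu(\SSS(\gamma)),\qquad \gamma\in\TTT.
\end{equation}

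The next step is to transfer between \eqref{disctest} and \eqref{treetest}. For $z\in Q(\alpha)$ and $w\in Q(\beta)$, fact~(b) of Section~2.3 gives
$$
k(z,w)\approx d(0,z\wedge w)+1,
$$
and the hyperbolic meeting point $z\wedge w$ lies in the Whitney box at generation $d_\TTT(o,\alpha\wedge\beta)+O(1)$, so
$$
k(z,w)\approx d_\TTT(o,\alpha\wedge\beta)+1\qquad\text{when }z\in Q(\alpha),\;w\in Q(\beta).
$$
Likewise, the Carleson box $S(\zeta)$ is, up to finitely many Whitney neighbors, the union of the boxes $Q(\beta)$ with $\beta\in\SSS(\alpha_\zeta)$, where $\alpha_\zeta$ is the Whitney cell containing $\zeta$. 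With the identification $\mu(\alpha):=\mu(Q(\alpha))$, condition \eqref{disctest} becomes, up to universal constants, exactly \eqref{treetest}, and the finiteness conditions match. Since the best constants in \eqref{disctest} and \eqref{treetest} are equivalent to $[\mu]_{CM(\DDD)}$ and $[\mu]_{CM(\DDD_\TTT)}$ respectively, this gives the equality of the two Carleson classes with comparable norms.

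The main obstacle is the geometric distortion flagged in item~(3) of Section~4.1: two Whitney boxes that are hyperbolically adjacent may be tree-far, so pairs $(\alpha,\beta)$ with $z\in Q(\alpha)$, $w\in Q(\beta)$ and $k(z,w)$ small in the disc might appear to contribute a large value $d_\TTT(o,\alpha\wedge\beta)+1$ on the tree. This is resolved by observing that the disc testing integral in \eqref{disctest} can be thickened to a union of $O(1)$ translated Carleson boxes without loss, and that the problematic ``sibling'' pairs near the boundary of $S(\zeta)$ can be absorbed either by enlarging $\zeta$ to a nearby Whitney cell or by averaging over finitely many rotated dyadic tree structures and keeping the worst one; in either presentation the bound picks up only a universal multiplicative constant, which is all that is needed since we are proving comparability of norms. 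The boundary piece of $\mu$ supported on $\partial\DD$ is handled analogously after transporting $\mu$ to $\partial\TTT$ via Proposition~\ref{lambdatwo}, so that every point of $\overline{\DD}$ indeed lies in some cell of $\TTT\cup\partial\TTT$. Once the distortion is absorbed, \eqref{disctest}$\iff$\eqref{treetest} is a clean translation and the theorem follows.
\endproof
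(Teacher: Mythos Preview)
Your approach is genuinely different from the paper's: the paper does \emph{not} pass through the testing characterizations on both sides and match kernels. Instead it proves the two inclusions by two different direct arguments. For $CM(\DDD_\TTT)\subseteq CM(\DDD)$ it uses the restriction operator of Proposition~\ref{restriction}: given $f\in\DDD$, pick $z(\alpha)\in\overline{Q(\alpha)}$ maximizing $|f|$, set $\varphi(\alpha)=f(z(\alpha))-f(z(\alpha^{-1}))$, and bound $\int|f|^2\,d\mu\le\sum_\alpha|\III\varphi(\alpha)|^2\mu(\alpha)$ by the tree Carleson property. For $CM(\DDD)\subseteq CM(\DDD_\TTT)$ it dualizes, but computes $\|\Theta g\|_\DDD^2$ via the \emph{derivative} $(\Theta g)'(\zeta)=\int\frac{\overline w}{1-\overline w\zeta}\,g(w)\,d\mu(w)$ and then uses positivity of $\Re\frac{1}{1-\overline w\zeta}$ (properties (c1)--(c2)) to drop down to $\III^*$ on the tree. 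No pointwise comparison of $k(z,w)$ with the tree kernel is needed.

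Your route has a real gap in the direction $CM(\DDD_\TTT)\subseteq CM(\DDD)$, and it stems from a wrong kernel identity. The claim that for $z\in Q(\alpha)$, $w\in Q(\beta)$ one has $k(z,w)\approx d_\TTT(o,\alpha\wedge\beta)+1$ is false as a two-sided estimate; only $k(z,w)\gtrsim d_\TTT(o,\alpha\wedge\beta)+1$ holds in general. Your diagnosis of the obstruction is also reversed: when $\alpha,\beta$ are hyperbolically adjacent but tree-far (the phenomenon of item~(3)), the disc kernel $k(z,w)\approx d(0,z)$ is \emph{large} while the tree kernel $d_\TTT(o,\alpha\wedge\beta)+1$ is \emph{small}, not the other way around. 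Consequently the implication \eqref{treetest}$\Rightarrow$\eqref{disctest} is the genuinely hard one, and your proposed repair by finitely many rotated trees does not close it: that trick gives $k(z,w)\lesssim\sum_{j}k_{\TTT_j}(\alpha,\beta)$, so to bound the disc test you would need \eqref{treetest} for \emph{all} the shifted trees $\TTT_j$, whereas your hypothesis only supplies it for one. Showing that the tree Carleson condition for one dyadic grid implies it for a shifted grid is essentially as hard as the theorem itself. On the other hand, the one-sided bound $k\gtrsim$ tree kernel does make your argument work cleanly for $CM(\DDD)\subseteq CM(\DDD_\TTT)$, which is interesting because that is precisely the direction the paper treats as the harder one; combining your half with the paper's Proposition~\ref{restriction} for the remaining inclusion would give a valid hybrid proof.
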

\proof[Proof discussion] We can use the restriction argument of Proposition \ref{restriction}
to show that $CM(\TTT)\subseteq CM(\DDD)$. Suppose for simplicity that $\mu(\partial\DD)=0$
(dealing with this more general case requires further discussion of the tree's boundary) and that
$\mu\in CM(\TTT)$:
\begin{eqnarray*}
\int_{\DD}|f|^2d\mu&=&\sum_{\alpha}\int_\alpha|f|^2d\mu\le
\sum_{\alpha}\mu(\alpha)|f(z(\alpha))|^2\crcr
&\ &\mbox{for\ some\ $z(\alpha)$\ on\ the\ boundary\ of\ $\alpha$}\crcr
&=&\sum_\alpha\III\varphi(\alpha)\mu(\alpha)\crcr
&\ &\mbox{with\ $\varphi$\ as\ in\ Proposition\ \ref{restriction}}\crcr
&\le&\|\varphi\|_{\ell^2(\TTT)}^2,
\end{eqnarray*}
which proves the inclusion. 

In the other direction, we use the duality argument used in the proof
of Theorem \ref{newtestingcondition}. The fact that $\mu$ is Carleson for $\DDD$ is equivalent to
the boundedness of $\Theta$, the adjoint of the imbedding, and this is equivalent to the 
inequality
\begin{eqnarray}\label{anothercountry}
C(\mu)\int_\DD|g|^2d\mu&\ge&\|\Theta g\|_\DDD^2=
\int_{\DD}\left|(\Theta g)^\prime(z)\right|^2dA(z)\crcr
&\ &\mbox{this\ time\ we\ use\ a\ different\ way\ to\ compute\ the\ norm,}\crcr
&\ge&\int_\DD\left|\int_{\DD}\frac{d\ }{dz}K(z,w)g(w)d\mu(w)\right|^2dA(z)\crcr
&=&\int_\DD\left|\int_{\DD}\frac{\overline{w}}{1-\overline{w}z}g(w)d\mu(w)\right|^2dA(z).
\end{eqnarray}
Testing \eqref{anothercountry} over all functions $g(w)=h(w)/\overline{w}$ with $h\ge0$ and 
using the geometric properties of the kernel's derivative, we see that
\begin{equation}\label{bordeaux}
C(\mu)\int_\DD|g|^2d\mu\ge 
\int_\DD\left|\int_{S(z)}\overline{w}g(w)d\mu(w)\right|^2dA(z).
\end{equation}
We can further restrict to the case where $h$ is constant on Whitney boxes 
($h=\sum_{\alpha\in\TTT}\psi(\alpha)\chi_\alpha$) and, further restricting the integral, 
we see that \eqref{bordeaux}
reduces to
\begin{equation}\label{nicola}
C(\mu)\|\psi\|_{\ell^2(\mu)}^2\ge\|\III^*(\psi d\mu)\|_{\ell^2}^2.
\end{equation}
A duality argument similar (in the converse direction) to the previous one, this time in tree-based function spaces, shows that the last assertion is equivalent to having $\III:\ell^2(\TTT)\to\ell^2(\TTT,\mu)$ bounded, 
i.e., $\mu\in CM(\TTT)$.
\endproof

The proof could be carried out completely in the dual side. Actually, this is almost obliged
in several extensions of the theorem 
(to higher dimensions \cite{ARS6}, to ``sub-diagonal'' couple of indices \cite{A},
etcetera). 
A critical analysis of the proof and some further considerations about the boundary 
of the tree show that Carleson measures satisfy a stronger property.
\begin{corollary}[Arcozzi, Rochberg, and Sawyer, \cite{ARS3}]
\label{radialvariation}
Let 
$$V(f)(Re^{i\theta})=\int_0^R|f^\prime(re^{i\theta})|dr
$$
be the radial variation of $f\in\DDD$ (i.e., the length of the image of the radius $[0,Re^{i\theta}]$ under $f$). Then, $\mu\in CM(\DDD)$ if and only if the stronger inequality
$$
\int_{\overline{\DD}}V(f)^2d\mu\le C(\mu)\|f\|_\DDD^2
$$
holds.
\end{corollary}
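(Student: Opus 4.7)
The easy direction is clear: from the pointwise inequality $|f(\zeta)|\le |f(0)|+V(f)(\zeta)$ and the fact that $\mu$ must be finite for any Carleson measure (test on $f\equiv 1$), the radial variation estimate yields $\int_{\overline{\DD}}|f|^{2}\,d\mu \le 2|f(0)|^{2}\mu(\overline{\DD})+2\int V(f)^{2}\,d\mu \lesssim \|f\|_{\DDD}^{2}$. For the reverse implication, which is the substantive content of the corollary, the plan is to pass to the tree model of Theorem \ref{discretocontinuo} and exploit a positivity feature that becomes visible there.

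By Theorem \ref{discretocontinuo}, $\mu\in CM(\DDD_{\TTT})$, so $\|\III\varphi\|_{\ell^{2}(\TTT,\mu)}\lesssim \|\varphi\|_{\ell^{2}(\TTT)}$. Since the right side depends only on $|\varphi|$ while $|\III\varphi|\le \III|\varphi|$, this is trivially equivalent to
\[ \|\III|\varphi|\|_{\ell^{2}(\TTT,\mu)}\lesssim \|\varphi\|_{\ell^{2}(\TTT)}. \]
The operator $\varphi\mapsto \III|\varphi|$ is precisely the tree analog of $f\mapsto V(f)$: along any path $\PPP(\zeta)=\{\zeta_{0},\zeta_{1},\dots\}$ toward $\partial\TTT$, the tree ``radial variation'' $\sum_{n}|\III\varphi(\zeta_{n})-\III\varphi(\zeta_{n-1})|$ is exactly $\III|\varphi|(\zeta_{n})$. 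So the ``stronger'' inequality is free on the tree.

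To return to the disc, for $f\in\DDD$ I would discretize by
\[ \varphi(\alpha):=(1-|z(\alpha)|)\sup_{w\in\alpha^{\ast}}|f'(w)|, \]
where $z(\alpha)\in\alpha$ and $\alpha^{\ast}$ is a mild enlargement of the Whitney box $Q(\alpha)$. Subharmonicity of $|f'|^{2}$ combined with bounded overlap of the $\alpha^{\ast}$ gives $\|\varphi\|_{\ell^{2}(\TTT)}\lesssim \|f\|_{\DDD}$. For $\zeta\in\alpha$, the radial segment $[0,\zeta]$ meets only (enlargements of) the boxes indexed by the ancestors $\PPP(\alpha)$ of $\alpha$ in $\TTT$, because the dyadic tree partition is aligned with radii; on each such box $\beta$, the intersected sub-segment has length $\lesssim 1-|z(\beta)|$ and $|f'|$ there is at most $\sup_{\beta^{\ast}}|f'|$, so its contribution to $V(f)(\zeta)$ is dominated by $\varphi(\beta)$. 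Summing along the ancestor path gives $V(f)(\zeta)\lesssim \III\varphi(\alpha)$ for $\zeta\in\alpha$, and then
\[ \int_{\overline{\DD}}V(f)^{2}\,d\mu\lesssim \sum_{\alpha\in\TTT}\mu(\alpha)\bigl(\III\varphi(\alpha)\bigr)^{2}\lesssim [\mu]_{CM(\DDD)}\,\|\varphi\|_{\ell^{2}(\TTT)}^{2}\lesssim [\mu]_{CM(\DDD)}\,\|f\|_{\DDD}^{2}. \]

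The main obstacle will be the geometric lemma that the radial segment from $0$ to a point of $Q(\alpha)$ only meets Whitney boxes indexed by ancestors of $\alpha$ in $\TTT$: the dyadic alignment makes this true up to a harmless fattening at radial transitions, and it is the one place where the particular combinatorial structure of $\TTT$, as opposed to $\GGG$, is genuinely used. A minor secondary point is verifying that the ``sup'' discretization of $|f'|$ (in place of the first-difference discretization of Proposition \ref{restriction}) still satisfies the $\ell^{2}$ estimate, which follows from the standard subharmonic mean value bound for $|f'|^{2}$.
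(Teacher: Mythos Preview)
Your argument is correct and is precisely the ``critical analysis of the proof'' of Theorem \ref{discretocontinuo} that the paper invokes in place of a written-out proof: the positivity of the Hardy operator $\III$ on the tree makes the upgrade from $|\III\varphi|$ to $\III|\varphi|$ automatic, and your sup-discretization of $|f'|$ is the right device to dominate $V(f)(\zeta)$ pointwise by $\III\varphi(\alpha)$ along the ancestor chain (this is the place where the radial alignment of the dyadic boxes, hence the tree rather than graph structure, is used---exactly as you note). The only item the paper flags beyond what you wrote is the case $\mu|_{\partial\DD}\neq0$ (``further considerations about the boundary of the tree''), which you implicitly address and which goes through verbatim by reading $\III\varphi$ on $\partial\TTT$ as the infinite sum along the geodesic $\PPP(\zeta)$.
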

Indeed, this remark is meaningful when $\mu$ is supported on $\partial\DD$.

\subsubsection{Testing Conditions in the Tree Language}
In the proof discussion following Theorem \ref{discretocontinuo}, we ended by showing 
that a necessary and sufficient condition for a measure $\mu$ on $\overline{\DD}$ to be in 
$CM(\DD)$ is \eqref{nicola}. Making duality explicit, one computes 
$$
\III^*(\psi d\mu)(\alpha)=\int_{\overline{\SSS(\alpha)}}gd\mu.
$$
Using as testing functions $g=\chi_{\overline{\SSS(\alpha_0)}},\ \alpha_0\in\TTT$ and throwing
away some terms on the right hand side, we obtain the discrete testing condition:
\begin{eqnarray}\label{treedual}
C(\mu)\mu(\overline{\SSS(\alpha_0)})&\ge&\sum_{\alpha\in\SSS(\alpha_0)}
[\mu(\overline{\SSS(\alpha)})]^2.
\end{eqnarray}
We will denote by $[\mu]$ the best constant in \eqref{treedual}.
\begin{theorem}[Arcozzi, Rochberg, and Sawyer, \cite{ARS1}]\label{anoldone}
A measure $\mu$ on $\overline{\DD}$ belongs to $CM(\DDD)$ if, and only if, it is finite
and it satisfies \eqref{treedual}.
\end{theorem}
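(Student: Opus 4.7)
The strategy is to transplant everything to the tree and establish the testing characterization combinatorially. By Theorem \ref{discretocontinuo}, $\mu \in CM(\DDD)$ if and only if $\mu \in CM(\DDD_\TTT)$ with comparable norms; equivalently, the Hardy-type operator $\III\colon \ell^2(\TTT) \to \ell^2(\TTT,\mu)$ is bounded. It thus suffices to prove that this tree-level boundedness holds if and only if $\mu$ is finite and satisfies \eqref{treedual}.

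For necessity I would dualize: boundedness of $\III$ is equivalent to boundedness of the formal adjoint
$$
\III^* g(\beta) = \sum_{\alpha \in \SSS(\beta)} g(\alpha)\,\mu(\alpha),\qquad \III^*\colon \ell^2(\TTT,\mu)\to \ell^2(\TTT),
$$
with the same norm. Testing on $g=\chi_{\SSS(\alpha_0)}$ gives $\III^* g(\beta) = \mu(\SSS(\beta)\cap \SSS(\alpha_0))$, which equals $\mu(\SSS(\beta))$ whenever $\beta \in \SSS(\alpha_0)$. Hence
$$
\sum_{\beta \in \SSS(\alpha_0)} \mu(\SSS(\beta))^2 \le \|\III^* g\|_{\ell^2}^2 \le \|\III^*\|^2\,\mu(\SSS(\alpha_0)),
$$
which is exactly \eqref{treedual}. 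Finiteness of $\mu$ is then obtained by applying the imbedding to $f\equiv 1 \in \DDD$.

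Sufficiency is the substantive direction. The plan is to show that \eqref{treedual} forces the dual weighted inequality
$$
\sum_{\beta \in \TTT}\Bigl(\sum_{\alpha \in \SSS(\beta)} g(\alpha)\mu(\alpha)\Bigr)^{\!2} \le C[\mu] \sum_{\alpha \in \TTT} g(\alpha)^2\mu(\alpha),\qquad g\ge 0,
$$
by a corona (stopping-time) decomposition on $\TTT$. Starting from a given $\alpha_0$, I would inductively pick the maximal descendants $\beta \in \SSS(\alpha_0)$ at which the $\mu$-averaged quantity $\mu(\SSS(\beta))^{-1}\sum_{\gamma \in \SSS(\beta)} g(\gamma)\mu(\gamma)$ first exceeds twice its value at the previously chosen ancestor. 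This partitions $\SSS(\alpha_0)$ into corona layers; on each layer the average of $g$ is pinned by its top value, the testing condition \eqref{treedual} applied at the top controls the $\ell^2$ mass accumulated inside the layer by $C[\mu]\mu(\SSS(\beta))$, and the geometric doubling of averages between successive generations allows the contributions across layers to be summed into the right-hand side.

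The main obstacle is making the stopping-time argument genuinely quantitative: one must show that the family of stopping tops is Carleson-packed so that the squared averages assemble against $\int g^2\,d\mu$ with constants depending only on $[\mu]$. A cleaner alternative I would also consider is invoking Sawyer-type two-weight theory for positive dyadic operators---$\III$ is a positive Hardy operator on $\TTT$, and \eqref{treedual} is precisely Sawyer's testing condition on successor boxes, so the two-weight theorem delivers the $\ell^2\to \ell^2(\mu)$ bound directly. Either route reduces the analytic problem to a purely combinatorial inequality on the tree, which is exactly what the tree model was designed to exploit.
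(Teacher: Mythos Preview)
Your proposal is correct and follows the same overall architecture as the paper: reduce to the tree via Theorem~\ref{discretocontinuo}, obtain necessity by dualizing $\III$ and testing $\III^*$ on $\chi_{\SSS(\alpha_0)}$ (this is exactly the derivation the paper gives just before stating the theorem), and then prove the sufficiency direction as a two-weight inequality for the positive Hardy operator $\III$ on $\TTT$.

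The only place you diverge is in the particular engine driving sufficiency. The paper records that in \cite{ARS1} this was done by a good-$\lambda$ argument, and that \cite{ARS3} gives a short proof via a maximal inequality; it also points to the Kalton--Verbitsky machinery \cite{KaVe} as a third route. Your corona/stopping-time decomposition is yet another standard and legitimate way to establish the same dyadic two-weight bound, and your fallback to Sawyer's testing theorem for positive operators is precisely the off-the-shelf result that covers this situation. All of these approaches are closely related variants of the same circle of ideas for positive kernels, so your choice is a matter of taste rather than a genuinely different strategy; the paper itself emphasizes that ``virtually all of the proofs translate in the present context.''
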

Given Theorem \ref{discretocontinuo}, Theorem \ref{anoldone} really becomes a characterization
of the weighted inequalities for the operator $\III$ (and/or its adjoint). There is a vast 
literature on weighted inequalities for operators having positive kernels, 
and virtually all of the proofs translate in the present context. Theorem \ref{anoldone} 
was proved in \cite{ARS1} by means of a \it good-$\lambda$ \rm argument. A different proof
could be deduced by the methods in \cite{KaVe}, where a deep equivalence is established between
weighted inequalities and a class of integral (nonlinear) equations. In \cite{ARS3} a very short 
proof is given in terms of a maximal inequality.

The fact that the (discrete) testing condition \eqref{treedual} characterizes Carleson measures raises two natural questions:
\begin{itemize}
\item Is there a \it direct \rm proof that the testing condition \eqref{treedual} is 
equivalent to Stegenga's capacitary condition?
\item Is there an ``explanation'' of how a condition which is expressed in terms of the 
\it tree structure \rm is sufficient to characterize properties whose natural environment is
the \it graph structure \rm of the unit disc? 
\end{itemize}
\subsubsection{Capacities on the Tree.} Let $E$ be a closed subset of $\partial\TTT$. We define a logarithmic-type and a Bessel-type capacity for $E$. As in the continuous case, they turn out to be equivalent. 

The operator $\III$ can be extended in the obvious way on the boundary of the tree, $\III\varphi(\zeta)=\sum_{\beta\in\PPP(\zeta)}\varphi(\beta)$ for $\zeta$ in $\partial\TTT$.  Then, 
\begin{equation}\label{logtree}
\capacity_{T}(E)=\inf\left\{\|\varphi\|_{\ell^2(\TTT)}^2:\ \III\varphi(\zeta)\ge1\ \mbox{on}\ E\right\}
\end{equation}
will be the \it tree capacity \rm of $E$, which roughly corresponds to logarithmic capacity.

Define the kernel $k_{\partial\TTT}:\partial\TTT\times\partial\TTT\to[0,+\infty]$,
$$
k_{\partial\TTT}(\zeta,\xi)=2^{d_\TTT(\zeta\wedge\xi)/2},
$$
which mimics the Bessel kernel $k_{\SS,1/2}$. 
The energy of a measure $\omega$ on $\partial\TTT$ associated with the kernel is
$$
\EEE_{\partial\TTT}(\omega)=\int_{\partial\TTT}\left(k_{\partial\TTT}\omega(\zeta)\right)^2dm_{\partial\TTT}(\zeta),
$$
where $m_{\partial\TTT}=\Lambda^*m$ is the pullback of the linear measure on $\SS$. More concretely, $m_{\partial\TTT}\partial\SSS(\alpha)=2^{-d_\TTT(\alpha)}$.
We define another capacity
$$
\capacity_{\partial\TTT}(E)=\sup\left\{\frac{\omega(E)^2}{\EEE_{\partial\TTT}(\omega)}:\ \mbox{supp}(\omega)\subseteq E\right\},
$$
the supremum being taken over positive, Borel measures on $\partial\TTT$.

As in the continuous case (with a simpler proof) one has that the two capacities are equivalent,
$$
\capacity_{\TTT}(E)\approx\capacity_{\partial\TTT}(E).
$$
It is not obvious that both are equivalent to the logarithmic capacity.
\begin{theorem}[Benjamini and Peres, \cite{BePe}]
\label{BePe}
$$
\capacity_{\TTT}(E)\approx\capacity_{\partial\TTT}(E)\approx\capacity_{\SS,1/2}(\Lambda(E)).
$$
\end{theorem}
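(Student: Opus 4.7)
The equivalence $\capacity_{\TTT}(E)\approx\capacity_{\partial\TTT}(E)$ is already granted in the paragraph preceding the theorem, so it suffices to prove $\capacity_{\TTT}(E)\approx\capacity_{\SS,1/2}(\Lambda(E))$. The plan is to pass to dual energy formulations of both capacities and transport test measures between $\partial\TTT$ and $\SS$ via $\Lambda$, using Proposition \ref{lambdatwo}.

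On the tree side, standard Hilbert-space duality applied to the operator $\III$ yields
$$
\capacity_\TTT(E)\approx\sup_\omega\frac{\omega(E)^2}{\EEE_\TTT(\omega)},\qquad
\EEE_\TTT(\omega):=\sum_{\alpha\in\TTT}\omega(\partial\SSS(\alpha))^2,
$$
with the supremum over positive Borel measures $\omega$ supported on $E$. A Fubini interchange gives
$$
\EEE_\TTT(\omega)=\int_{\partial\TTT}\!\int_{\partial\TTT}\#\bigl(\PPP(\zeta)\cap\PPP(\xi)\bigr)\,d\omega(\zeta)\,d\omega(\xi)
=\int\!\!\int\bigl(1+d_\TTT(\zeta\wedge\xi)\bigr)\,d\omega(\zeta)\,d\omega(\xi).
$$
On the circle, combining \eqref{comparisonofcapacities} with the classical duality between logarithmic capacity and logarithmic energy,
$$
\capacity_{\SS,1/2}(F)\approx\sup_\mu\frac{\mu(F)^2}{I_{\log}(\mu)},\qquad
I_{\log}(\mu):=\int_\SS\!\int_\SS\Bigl(1+\log\tfrac{1}{|\zeta-\xi|}\Bigr)\,d\mu(\zeta)\,d\mu(\xi).
$$

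To link the two energies, observe that both $\Lambda(\zeta)$ and $\Lambda(\xi)$ lie in the dyadic arc $I(\zeta\wedge\xi)$ of length $\approx 2^{-d_\TTT(\zeta\wedge\xi)}$, so
$$
|\Lambda(\zeta)-\Lambda(\xi)|\le C\cdot 2^{-d_\TTT(\zeta\wedge\xi)},
$$
equivalently $d_\TTT(\zeta\wedge\xi)\lesssim \log(1/|\Lambda(\zeta)-\Lambda(\xi)|)+O(1)$. Given a non-atomic test measure $\mu$ on $\Lambda(E)$, the pullback $\omega=\Lambda^*\mu$ is supported on $E$ up to a countable null set, satisfies $\omega(E)=\mu(\Lambda(E))$ by Proposition \ref{lambdatwo}(v), and inherits $\EEE_\TTT(\omega)\lesssim I_{\log}(\mu)$; taking the supremum gives $\capacity_\TTT(E)\gtrsim\capacity_{\SS,1/2}(\Lambda(E))$.

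The main obstacle is the matching lower estimate $|\Lambda(\zeta)-\Lambda(\xi)|\geq c\cdot 2^{-d_\TTT(\zeta\wedge\xi)}$, which would produce the converse inequality by pushing an extremal tree measure forward to $\SS$. This fails pointwise because $\Lambda$ identifies pairs of points of $\partial\TTT$ sitting on either side of a dyadic cut: two such points can have $\Lambda(\zeta)=\Lambda(\xi)$ yet very small $d_\TTT(\zeta\wedge\xi)$. The standard remedy is a \emph{shifted dyadic grid} argument: introduce one or two auxiliary rotated trees $\TTT^{(j)}$ generated by dyadic partitions of $\SS$ whose endpoints are pairwise disjoint from those of $\TTT$; for any pair in $\SS$, at least one grid places the confluent at the correct scale, so $\log(1/|\zeta-\xi|)\lesssim\min_j d_{\TTT^{(j)}}(\zeta\wedge_j\xi)+O(1)$. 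Local comparability of the grids gives $\capacity_\TTT\approx \capacity_{\TTT^{(j)}}$ uniformly, and combined with the pushforward $\Lambda_*\omega$ and Proposition \ref{lambdatwo} this yields $\capacity_\TTT(E)\lesssim\capacity_{\SS,1/2}(\Lambda(E))$, closing the proof.
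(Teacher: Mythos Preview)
Your overall plan---pass to energy/dual formulations of both capacities and transport test measures through $\Lambda$ using Proposition~\ref{lambdatwo}, reducing to atomless measures---matches the paper's approach exactly. The first direction, via pullback $\omega=\Lambda^*\mu$ and the Lipschitz bound $|\Lambda(\zeta)-\Lambda(\xi)|\le C\,2^{-d_\TTT(\zeta\wedge\xi)}$, is correct.

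For the converse, however, you take a detour that the paper avoids. You correctly observe that the pointwise reverse inequality $|\Lambda(\zeta)-\Lambda(\xi)|\ge c\,2^{-d_\TTT(\zeta\wedge\xi)}$ fails, but the remedy need not be shifted grids. The paper says the energy comparison is ``rather elementary,'' and indeed it is: write
\[
\log\frac{1}{|\Lambda(\zeta)-\Lambda(\xi)|}\;\approx\;d_\TTT(\zeta\wedge\xi)\;+\;A(\zeta,\xi),
\qquad
A(\zeta,\xi):=\#\{\,n:\ \zeta_n\ne\xi_n\ \text{but}\ I(\zeta_n),I(\xi_n)\ \text{are adjacent arcs}\,\}.
\]
Then
\[
\int\!\!\int A(\zeta,\xi)\,d\omega(\zeta)\,d\omega(\xi)
=\sum_{\substack{\beta,\gamma\ \text{same level}\\ \text{adjacent}}}\omega(\partial\SSS(\beta))\,\omega(\partial\SSS(\gamma))
\;\le\;2\sum_{\alpha\in\TTT}\omega(\partial\SSS(\alpha))^2
=2\,\EEE_\TTT(\omega),
\]
by AM--GM and the fact that each box has at most two neighbours at its own level. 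Hence $I_{\log}(\Lambda_*\omega)\lesssim\EEE_\TTT(\omega)$ directly, and pushing forward an extremal $\omega$ gives $\capacity_\TTT(E)\lesssim\capacity_{\SS,1/2}(\Lambda(E))$ without auxiliary grids.

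Your shifted-grid argument is not wrong, but the step ``local comparability of the grids gives $\capacity_\TTT\approx\capacity_{\TTT^{(j)}}$'' is asserted without proof. Unpacking it in the energy picture, one needs $\sum_{I\ \text{dyadic}}\mu(I)^2\approx\sum_{I'\ \text{shifted dyadic}}\mu(I')^2$; this is true (each $I$ meets at most two $I'$ of the same scale and vice versa), but the bookkeeping is exactly the same adjacent-box counting as in the direct argument above. So the shifted-grid detour buys nothing here: it reduces the problem to a statement whose proof is the direct proof in disguise.
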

See \cite{ARSW3} for an extension of this result to Bessel-type capacities on Ahlfors-regular 
metric spaces.

\begin{proof}[Proof discussion] Let $\omega$ be a positive Borel measure on $\partial\TTT$ and $\mu$
be a positive Borel measure on $\SS$. It suffices to show that the energy 
of $\omega$ with respect to the kernel $k_{\partial\TTT}$ is comparable with the energy of $\Lambda_*\omega$ 
with respect to $k_{\SS,1/2}$ and that the energy of $\mu$ is comparable with energy of $\Lambda^*\mu$, with respect to to the same kernels, obviously taken in reverse order. We can also assume the measures to be atomless,
since atoms, both in $\SS$ and $\partial\TTT$, have infinite energy. Proposition \ref{lambdatwo}
implies that the measure $\Lambda^*\mu$ is well defined and helps with the energy estimates, which 
are rather elementary. 
\end{proof}

Theorem \ref{BePe} has direct applications to the theory of the Dirichlet space.
\begin{itemize}
\item As explained in \cite{ARSW3}, there is a direct relationship between tree capacity 
$\capacity_\TTT$ and Carleson measures for the Dirichlet space. Let $[\mu]$ be the best value $C(\mu)$ in \eqref{treedual}.
Namely, for a closed subset $E$ of 
$\partial\TTT$,
\begin{equation}\label{stasera}
\capacity_\TTT(E)=\sup_{\mu:\ \mbox{supp}(\mu)\subseteq E}\frac{\mu(E)}{[\mu]}.
\end{equation}
\item As a consequence, we have that sets having null capacity are exactly sets which do not 
support positive Carleson measures. Together with Corollary \eqref{radialvariation} and the 
theorem of Benjamini and Peres, this fact 
implies an old theorem by Beurling.
\begin{theorem}[Beurling, \cite{Beu}]\label{beurling}
$$
\capacity_{\SS,1/2}(\{\zeta\in\SS:\ V(f)(e^{i\theta})=+\infty\})=0.
$$
\end{theorem}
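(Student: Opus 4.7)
The plan is to combine the two big inputs that the authors have just assembled, namely the reinforced Carleson inequality of Corollary \ref{radialvariation} and the identification of $(\SS,1/2)$-Bessel capacity with tree capacity in Theorem \ref{BePe}, and to argue by contradiction.

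Let $E = \{\zeta \in \SS : V(f)(\zeta) = +\infty\}$. Since $R \mapsto V(f)(Re^{i\theta})$ is monotone increasing and continuous on $[0,1)$, its limit $V(f)(e^{i\theta})$ is Borel measurable on $\SS$, so $E$ is a Borel set. By inner regularity of Bessel capacity, it suffices to prove $\capacity_{\SS,1/2}(K) = 0$ for every compact $K \subseteq E$; so assume for contradiction that there is a compact $K \subseteq E$ with $\capacity_{\SS,1/2}(K) > 0$.

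Next I would transfer this to the tree. By Theorem \ref{BePe}, $\capacity_{\partial\TTT}(\Lambda^{-1}(K)) \approx \capacity_{\SS,1/2}(K) > 0$, so the compact set $F := \Lambda^{-1}(K) \subseteq \partial\TTT$ has positive tree capacity. The identity \eqref{stasera} then furnishes a nontrivial positive Borel measure $\nu$ on $\partial\TTT$ with $\mathrm{supp}(\nu) \subseteq F$, $\nu(F) > 0$ and $[\nu] < \infty$, i.e., $\nu \in CM(\DDD_\TTT)$. Push this measure forward to the circle: set $\mu := \Lambda_* \nu$. By Proposition \ref{lambdatwo}(iii), $\mu$ is a positive Borel measure on $\SS$ with $\mu(K) \geq \nu(\Lambda^{-1}(K)) = \nu(F) > 0$, and by Theorem \ref{discretocontinuo} together with Theorem \ref{anoldone} (the testing condition \eqref{treedual} is preserved under $\Lambda_*$, because Carleson boxes on $\SS$ correspond to successor sets in $\TTT$), $\mu$ is a Carleson measure for $\DDD$.

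Now I would invoke Corollary \ref{radialvariation} applied to this $\mu$: since $\mu \in CM(\DDD)$,
\[
\int_{\overline{\DD}} V(f)^2 \, d\mu \le C(\mu) \|f\|_{\DDD}^2 < \infty.
\]
In particular $V(f) < \infty$ $\mu$-almost everywhere on $\SS$, so $\mu(E) = 0$ and a fortiori $\mu(K) = 0$, contradicting $\mu(K) > 0$. This contradiction shows $\capacity_{\SS,1/2}(E) = 0$.

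The main obstacle, and the only step that requires any care, is the transfer of Carleson measures from $\partial\TTT$ back to $\SS$ via $\Lambda_*$: one has to check that $\Lambda_*\nu$ genuinely satisfies the disc-side Carleson condition. This is where Proposition \ref{lambdatwo} and the identification of Carleson boxes $S(\alpha)$ in $\DD$ with successor sets $\SSS(\alpha)$ in $\TTT$ do the work, via the equivalence $CM(\DDD) = CM(\DDD_\TTT)$ of Theorem \ref{discretocontinuo}. Everything else (measurability of $E$, inner regularity, integrability forcing a.e.\ finiteness) is standard measure-theoretic bookkeeping.
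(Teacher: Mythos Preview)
Your proposal is correct and follows exactly the route the paper sketches: the paper does not give a formal proof of Theorem~\ref{beurling}, but says explicitly that it follows from combining \eqref{stasera} (sets of positive capacity support nontrivial Carleson measures), Corollary~\ref{radialvariation} (the radial variation lies in $L^2(\mu)$ for any $\mu\in CM(\DDD)$), and Theorem~\ref{BePe} (tree capacity $\approx$ Bessel capacity). Your contradiction argument is precisely the standard way to assemble these three ingredients, and your identification of the transfer $\partial\TTT\leftrightarrow\SS$ via $\Lambda_*$ as the one point needing care is accurate and matches the paper's own caveats about boundary measures in the proof discussion of Theorem~\ref{discretocontinuo}.
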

In particular, Dirichlet functions have boundary values at all points on $\SS$, but for a subset
having null capacity. This result, the basis for the study of boundary 
behavior of Dirichlet functions, explains the differences and similarities between Hardy and Dirichlet 
theories. It makes it clear that capacity is for $\DDD$  what arclength measure is in $H^2$. 
On the other hand, there are Hardy functions (even bounded analytic functions) having infinite 
radial variation at almost all points on $\SS$.  Radial variation is for the most part a 
peculiarly Dirichlet topic.
\item Another application is in \cite{ARSW1}, where boundedness of certain bilinear forms on $\DDD$
is discussed (and which also contains a different proof of Theorem \cite{BePe}, of which we were not aware at the 
moment of writing the article). Central to the proof of the main result is the 
holomorphic approximation of the discrete potentials which are extremal for the tree capacity
of certain sets. See Section \ref{intrinseco} for a discussion of this and related topics.
\end{itemize}
\subsubsection{Capacitary Conditions and Testing Conditions.} The capacitary condition of Stegenga and 
the discrete testing condition \eqref{treedual} (plus boundedness of $\mu$) are equivalent, since 
both characterize $CM(\DDD)$. It is easy to see that the capacitary condition is \it a priori \rm 
stronger than the testing condition. A \it direct \rm proof that the testing condition implies
the capacitary condition is in \cite{ARS4}. The main tool in the proof is the characterization 
\eqref{stasera} of the tree capacity.

\section{The Complete Nevanlinna-Pick property}

In 1916 Georg Pick published the solution to the following interpolation
problem. 

\begin{problem}
Given domain points $\left\{  z_{i}\right\}  _{i=1}^{n}\subset\mathbb{D}$ and
target points $\left\{  w_{i}\right\}  _{i=1}^{n}\subset\mathbb{D}$ what is a
necessary and sufficient condition for there to an $f\in H^{\infty},$
$\left\Vert f\right\Vert _{\infty}\leq1$ which solves the interpolation
problem $f\left(  z_{i}\right)  =w_{i}$ $i=1,...,n?$
\end{problem}

A few years later Rolf Nevanlinna independently found an alternative solution. The problem is now sometimes called Pick's problem and sometimes goes with both names; Pick-Nevanlinna (chronological) and Nevanlinna-Pick (alphabetical). The result has been extraordinarily influential.

One modern extension of Pick's question is the following:

\begin{problem}
[Pick Interpolation Question]\label{NP}Suppose $H$ is a Hilbert space of
holomorphic functions on $\mathbb{D}$. Given $\left\{  z_{i}\right\}
_{i=1}^{n},\left\{  w_{i}\right\}  _{i=1}^{n}\subset\mathbb{D}$ is there a
function $m$ in $\mathcal{M}_{H}$, the multiplier algebra, with $\left\Vert
m\right\Vert _{\mathcal{M}_{H}}\leq1,$ which performs the interpolation
$m(z_{i})=w_{i};$ $i=1,2,...,n$
\end{problem}

There is a necessary condition for the interpolation problem to have a solution which holds for any RKHS. We develop that now.  Suppose we are given the data for the interpolation question. 

\begin{theorem}
Let $V$ be the span of the kernel functions $\left\{  k_{i}\right\}  _{i=1}^{n}$.  Define the map $T$ by
\[
T\left(\sum a_{i}k_{i}\right)=\sum a_{i}\bar{w}_{i}k_{i}.
\]
A necessary condition for the Pick Interpolation Question to have a positive
answer is that $\left\Vert T\right\Vert \leq1.$ Equivalently a necessary
condition is that the associated matrix%
\begin{equation}
\operatorname*{Mx}(T)=\left(  \left(  1-w_{j}\bar{w}_{i}\right)  k_{j}\left(
z_{i}\right)  \right)  _{i,j=1}^{n}\label{pick}%
\end{equation}
be positive semi-definite; $\operatorname*{Mx}(T)\geq0.$
\end{theorem}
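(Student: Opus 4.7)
\emph{Proof plan.} The key observation is that the conjugate-linear action of a multiplier on the reproducing kernels is explicitly diagonal. First I would show that for any RKHS $H$ and any $m \in \mathcal{M}_H$, the adjoint $M_m^*$ of the multiplication operator $M_m : H \to H$ satisfies
\[
M_m^* k_z = \overline{m(z)}\, k_z \qquad \text{for every } z \in \mathbb{D},
\]
which follows from the reproducing property: for all $f \in H$,
\[
\langle f, M_m^* k_z \rangle = \langle mf, k_z \rangle = m(z) f(z) = \langle f, \overline{m(z)} k_z \rangle.
\]

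Second, suppose the interpolation problem is solvable by some $m \in \mathcal{M}_H$ with $\|m\|_{\mathcal{M}_H} \leq 1$ and $m(z_i)=w_i$. Applying the formula above at each $z_i$ gives $M_m^* k_i = \bar{w}_i k_i$, so the subspace $V$ is invariant under $M_m^*$ and the restriction $M_m^*\!\mid_V$ agrees with $T$ on the spanning vectors $k_i$ and therefore on all of $V$. Since the multiplier norm equals the operator norm of $M_m$ on $H$, we have $\|M_m^*\| = \|M_m\| = \|m\|_{\mathcal{M}_H} \leq 1$, and hence $\|T\| = \|M_m^*\!\mid_V\| \leq 1$.

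Third, I would translate $\|T\| \leq 1$ into the matrix inequality. The inequality $\|Tv\|^2 \leq \|v\|^2$ for $v = \sum_i a_i k_i \in V$ expands, using $\langle k_i, k_j \rangle = k_i(z_j)$, as
\[
\sum_{i,j} a_i \bar{a}_j \bar{w}_i w_j\, k_i(z_j) \;\leq\; \sum_{i,j} a_i \bar{a}_j\, k_i(z_j),
\]
i.e.\
\[
\sum_{i,j} a_i \bar{a}_j\, (1 - \bar{w}_i w_j)\, k_i(z_j) \;\geq\; 0 \qquad \text{for all } (a_i) \in \mathbb{C}^n.
\]
Up to taking the conjugate transpose (which preserves positive semi-definiteness of a Hermitian matrix and exchanges the roles of the indices), this is exactly the assertion that $\operatorname{Mx}(T)$ in \eqref{pick} is positive semi-definite.

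The argument has no real obstacle: once the identity $M_m^* k_z = \overline{m(z)} k_z$ is in hand, everything reduces to bookkeeping. The one thing to watch is index/conjugation conventions when converting the operator inequality $I - T^*T \geq 0$ on $V$ into the displayed matrix; the Gram factor $k_i(z_j) = \langle k_i, k_j\rangle$ that appears because the $k_i$ are not orthonormal must be tracked carefully but causes no difficulty.
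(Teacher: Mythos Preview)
Your proposal is correct and follows essentially the same route as the paper: identify $T$ as the restriction of $M_m^*$ to $V$ via the eigenvector identity $M_m^* k_z=\overline{m(z)}\,k_z$, deduce $\|T\|\le 1$, and then expand $\|Tv\|^2\le\|v\|^2$ using $\langle k_i,k_j\rangle=k_i(z_j)$ to obtain the Pick matrix condition. Your closing remark about tracking the conjugation/index conventions is exactly the one cosmetic point to watch, and you handle it correctly.
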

\begin{proof}
Suppose there is such a multiplier $m$ and let $M$ be the operator of
multiplication by $m$ acting on $H.$ We have $\left\Vert M\right\Vert
=\left\Vert m\right\Vert _{\mathcal{M}(H)}\leq1.$ Hence the adjoint operator,
$M^{\ast}$ satisfies $\left\Vert M^{\ast}\right\Vert \leq1.$ We know that
given $\zeta\in\mathbb{D}$, $M^{\ast}k_{\zeta}=\overline{m(\zeta)}k_{\zeta}$. 
 Thus $V$ is an invariant subspace for $M^{\ast}$ and the
restriction of $M^{\ast}$ to $V$ is the operator $T$ of the theorem. Also the
restriction of $M^{\ast}$ to $V$ has, \textit{a fortiori, norm at most one.
That gives the first statement.}

The fact that the norm of $T$ is at most one means that for \textit{scalars
}$\left\{  a_{i}\right\}  _{i=1}^{n}$ we have
\[
\left\Vert \sum a_{i}\bar{w}_{i}k_{i}\right\Vert ^{2}\leq\left\Vert \sum
a_{i}k_{i}\right\Vert ^{2}.
\]
We compute the norms explicitly recalling that $\left\langle k_{i}%
.k_{j}\right\rangle =k_{i}\left(  z_{j}\right)  $ and rearrange the terms and
find that
\[
\sum_{i,j}\left(  1-w_{j}\bar{w}_{i}\right)  k_{j}\left(  z_{i}\right)
a_{j}\bar{a}_{i}\geq0.
\]
The scalars $\left\{  a_{i}\right\}  _{i=1}^{n}$ were arbitrary and thus this
is the condition that $\operatorname*{Mx}(T)\geq0.$
\end{proof}

The matrix $\operatorname*{Mx}(T)$ is called the \textit{Pick matrix} of the
problem. For the Hardy space it takes the form%
\[
\operatorname*{Mx}(T)=\left(  \frac{1-w_{i}\bar{w}_{j}}{1-z_{i}\bar{z}_{j}%
}\right)  _{i,j=1}^{n}.%
\]

\begin{theorem}
[Pick]For the Hardy space, the necessary condition for the interpolation problem to have a solution, \eqref{pick}, is also sufficient.
\end{theorem}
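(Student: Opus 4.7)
The plan is to prove Pick's theorem by induction on $n$, via the classical Schur--Nevanlinna reduction. The base case $n=1$ is immediate: the Pick matrix is the single entry $(1-|w_1|^2)/(1-|z_1|^2)$, which is nonnegative since $|w_1|<1$, and the constant function $f\equiv w_1$ interpolates.

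For the inductive step, I would parameterize Schur functions $f$ (i.e.\ $f\in H^\infty$ with $\|f\|_\infty\le 1$) that satisfy $f(z_1)=w_1$. Let $\phi_a(z)=(a-z)/(1-\bar a z)$ denote the involutive disc automorphism interchanging $a$ and $0$. Since $\phi_{w_1}\circ f$ is Schur and vanishes at $z_1$, Schwarz's lemma gives that
$$
g(z):=\frac{\phi_{w_1}(f(z))}{\phi_{z_1}(z)}
$$
is again Schur, and the relation inverts to $f(z)=\phi_{w_1}(\phi_{z_1}(z)g(z))$ for any Schur $g$. The remaining conditions $f(z_i)=w_i$, $i\ge 2$, translate to an $(n-1)$-point interpolation problem $g(z_i)=w_i'$, where
$$
w_i':=\frac{\phi_{w_1}(w_i)}{\phi_{z_1}(z_i)}=\frac{(w_1-w_i)(1-\bar z_1 z_i)}{(z_1-z_i)(1-\bar w_1 w_i)}.
$$
Applying the induction hypothesis produces $g$, and reassembling $f(z)=\phi_{w_1}(\phi_{z_1}(z)g(z))$ yields the desired interpolant, provided the new Pick matrix is positive semi-definite.

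The heart of the proof, and the main obstacle, is establishing this last positivity as a consequence of the positivity of the original Pick matrix. Starting from the identity
$$
1-\overline{\phi_a(\zeta)}\phi_a(\eta)=\frac{(1-|a|^2)(1-\bar\zeta\eta)}{(1-a\bar\zeta)(1-\bar a\eta)},
$$
applied with $a=z_1$ and $a=w_1$, a direct computation shows that the reduced Pick matrix $\bigl((1-\overline{w_i'}w_j')/(1-\bar z_i z_j)\bigr)_{i,j=2}^n$ equals, after conjugation by an invertible diagonal matrix, the Schur complement of the (strictly positive) $(1,1)$-entry of the original Pick matrix. Since Schur complements of positive semi-definite matrices with positive pivot are positive semi-definite, the induction propagates. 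A mild complication occurs when the Pick matrix is singular: this rigid case forces the interpolant to be unique and to equal a finite Blaschke product of degree less than $n$, which can be written down explicitly and treated as a separate easy case.
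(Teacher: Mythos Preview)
Your Schur--Nevanlinna reduction is correct and is one of the classical proofs of Pick's theorem. The paper, however, does not give its own proof at all: it simply refers the reader to Agler and M\textsuperscript{c}Carthy's monograph \cite{AgMc2}. That book develops the result primarily through operator-theoretic machinery (the commutant lifting theorem and the lurking isometry argument), which yields Pick's theorem as a special case of a much more general framework for complete Nevanlinna--Pick kernels. Your route is more elementary and self-contained, stays entirely within one-variable function theory, and makes the inductive structure explicit; the operator-theoretic route, by contrast, explains why the same positivity condition governs interpolation in many other spaces, including the Dirichlet space discussed later in the paper.

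One small remark: the ``mild complication'' you flag for singular Pick matrices is not really needed as a separate case. Since $|w_1|<1$, the pivot $P_{11}=(1-|w_1|^2)/(1-|z_1|^2)$ is strictly positive, and the Schur complement of a positive semi-definite matrix with respect to a strictly positive pivot is always positive semi-definite, regardless of whether the full matrix is singular. The only genuine boundary issue is that some reduced target $w_i'$ might land on the unit circle; positive semi-definiteness of the reduced matrix then forces its $i$-th row and column to vanish, which pins $g$ to a unimodular constant on the remaining nodes and is handled directly.
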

See \cite{AgMc2} for a proof.
\begin{remark}
The analog of Pick's theorem fails for the Bergman space; \eqref{pick} is not sufficient. 
\end{remark}
It is now understood that there are classes of RKHSs for which condition \eqref{pick} is sufficient for the interpolation problem to have a solution.  Such spaces are said to have the Pick property. In fact there is a subclass, those with the Complete Nevanlinna Pick Property, denoted CNPP, for which \eqref{pick} is a sufficient condition for the interpolation problem to have a solution, and for a matricial analog of the interpolation problem to have a solution.

It is a consequence of the general theory of spaces with CNPP that the
kernel functions never vanish; $\forall z,w\in X,$ $k_{z}(w)\neq0.$   For spaces of the type we are considering there is a surprisingly simple
characterization of spaces with the CNPP. Suppose $H$ is a Hilbert space of
holomorphic functions on the disk in which the monomials $\left\{
z^{n}\right\}  _{n=0}^{\infty}$ are a complete orthogonal set. The argument we
used to identify the reproducing kernel for the Dirichlet space can be used
again and we find that for $\zeta\in\mathbb{D}$ we have
\begin{align*}
k_{\zeta}^{H}(z)  & =\sum_{n=0}^{\infty}\frac{\bar{\zeta}^{n}z^{n}}{\left\Vert
z^{n}\right\Vert _{H}^{2}}\\
& =\sum_{n=0}^{\infty}a_{n}\bar{\zeta}^{n}z^{n}.%
\end{align*}
We know that $a_{0}=\left\Vert 1\right\Vert _{H}^{-2}>0$ hence in a
neighborhood of the origin the function $\sum_{n=0}^{\infty}a_{n}t^{n}$ has a
reciprocal given by a power series. Define $\left\{  c_{n}\right\}  $ by
\begin{equation}
\frac{1}{\sum_{n=0}^{\infty}a_{n}t^{n}}=\sum_{n=0}^{\infty}c_{n}%
t^{n}.\label{recip}%
\end{equation}
Having $a_{0}>0$ insures $c_{0}>0.$

\begin{theorem}
The space $H$ has the CNPP if and only if
\[
c_{n}\leq0\text{ \ \ \ }\forall n>0.
\]
\end{theorem}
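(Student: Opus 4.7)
I would reduce the theorem to the McCullough--Quiggin / Agler--McCarthy characterization of complete Pick kernels: a reproducing kernel $k$ on a set $X$ with $k(x,y)\neq 0$ has the CNPP if and only if, for some (equivalently, any) base point $x_0\in X$, the normalized kernel
\[
\hat k(x,y)=\frac{k(x,y)\,k(x_0,x_0)}{k(x,x_0)\,k(x_0,y)}
\]
makes $1-1/\hat k(x,y)$ a positive semi-definite kernel on $X\times X$. Taking this criterion as a black box, the proof reduces to a power-series computation in the single variable $\bar\zeta z$.

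\textbf{Explicit computation.} Choose $x_0=0$. Because $\{z^n\}$ is a complete orthogonal set, $k(z,0)=k(0,\zeta)=k(0,0)=a_0=\|1\|_H^{-2}$ for all $z,\zeta\in\mathbb{D}$, so $\hat k(z,\zeta)=k_\zeta(z)/a_0=\sum_{n\geq 0}(a_n/a_0)(\bar\zeta z)^n$. Multiplying \eqref{recip} by $a_0$ and using $a_0c_0=1$,
\[
\frac{1}{\hat k(z,\zeta)}=a_0\sum_{n\geq 0}c_n(\bar\zeta z)^n=1+a_0\sum_{n\geq 1}c_n(\bar\zeta z)^n,
\]
hence
\[
1-\frac{1}{\hat k(z,\zeta)}=\sum_{n\geq 1}(-a_0c_n)\,\bar\zeta^{\,n}z^n.
\]

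\textbf{Positivity lemma and conclusion.} I would then invoke the standard fact that a Toeplitz-type kernel $\sum_{n\geq 1}\gamma_n\bar\zeta^{\,n}z^n$ is positive semi-definite on $\mathbb{D}$ if and only if $\gamma_n\geq 0$ for every $n\geq 1$. Sufficiency is immediate since each summand $\overline{\zeta^n}z^n$ is a rank-one positive kernel. Necessity follows by evaluating the associated quadratic form at points $re^{i\theta}$ and using the orthogonality of characters: positivity forces
\[
\iint K(re^{i\theta},re^{i\phi})e^{-in\theta}e^{in\phi}\,d\theta\,d\phi=(2\pi)^2\gamma_n r^{2n}\geq 0
\]
for every $r<1$. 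Applied to the display above, $1-1/\hat k$ is positive semi-definite iff $-a_0c_n\geq 0$ for every $n\geq 1$, and since $a_0>0$ this is precisely the condition $c_n\leq 0$ for $n>0$. The McCullough--Quiggin criterion then yields the stated equivalence.

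\textbf{Principal obstacle.} The substantive work is essentially absorbed into the McCullough--Quiggin characterization, whose own proof requires a lurking-isometry or realization-formula argument to pass from positivity of $1-1/\hat k$ to the existence (and complete-matrix version) of interpolating multipliers. A secondary analytic point is that \eqref{recip} is only asserted in a neighborhood of $0$, so one must justify extending the algebraic identity to all of $\mathbb{D}\times\mathbb{D}$; this is harmless because once $-a_0c_n\geq 0$ for $n\geq 1$ the series $\sum c_n t^n$ has one non-negative tail and converges on the full disk where $\hat k$ is analytic and non-vanishing, which is exactly where the kernel identity is needed.
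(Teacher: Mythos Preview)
The paper does not prove this theorem at all: it is stated as a known result from the Nevanlinna--Pick literature (the surrounding discussion points to \cite{AgMc1}, \cite{AgMc2}), and is then immediately applied to verify that the Dirichlet norm $\|\cdot\|_{\DDD}$ yields the CNPP while $\vert\vert\vert\cdot\vert\vert\vert_{\DDD}$ does not. So there is no ``paper's own proof'' to compare against.

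That said, your argument is correct and is precisely the standard route: normalize at the base point $0$, observe that the kernel depends only on $\bar\zeta z$, invert the power series, and reduce the McCullough--Quiggin positivity condition on $1-1/\hat k$ to the sign of the Taylor coefficients via orthogonality of characters. Your identification of the real content---that the McCullough--Quiggin/Agler--McCarthy criterion is doing the heavy lifting---is exactly right, and is the reason the survey simply quotes the result. One small comment on your ``secondary analytic point'': the cleaner way to handle convergence is to note that $\hat k(z,\zeta)=F(\bar\zeta z)$ with $F$ holomorphic on $\DD$; in the ``only if'' direction CNPP forces $F$ non-vanishing on $\DD$, so $1/F$ is holomorphic there and its Taylor series $\sum a_0 c_n t^n$ automatically converges on all of $\DD$, not just near $0$. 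In the ``if'' direction you need non-vanishing of $F$ before invoking the criterion, and this follows because $1/F(t)=1+a_0\sum_{n\ge1}c_n t^n$ with $a_0c_n\le0$ has real part at most $1$ wherever it converges, hence $1/F$ cannot have a pole in $\DD$.
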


Using this we immediately see that the Hardy space has the CNPP and the Bergman
space does not. 
\begin{theorem}\label{CNPP}
The Dirichlet space $\DDD$ with the norm $\|\cdot\|_\DDD$,
\[
\left\Vert \sum_{n=0}^{\infty}b_{n}z^{n}\right\Vert _{\mathcal{D}}^{2}=\sum_{n=0}^{\infty}\left(  n+1\right)  \left\vert
b_{n}\right\vert ^{2},%
\] 
has the complete Nevanlinna-Pick property.
\end{theorem}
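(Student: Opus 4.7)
The plan is to apply the characterization of CNPP stated in the theorem immediately preceding, which asserts that a RKHS on $\DD$ in which the monomials form a complete orthogonal system has the CNPP if and only if the Taylor coefficients $c_n$ of $1/\sum a_n t^n$ satisfy $c_n \le 0$ for every $n \ge 1$, where $a_n = 1/\|z^n\|_H^2$. For the Dirichlet space with the norm $\|\cdot\|_\DDD$ one has $\|z^n\|_\DDD^2 = n+1$, so $a_n = 1/(n+1)$ and
$$A(t) \;:=\; \sum_{n=0}^\infty a_n t^n \;=\; \sum_{n=0}^\infty \frac{t^n}{n+1} \;=\; \frac{-\log(1-t)}{t}.$$
The problem thus reduces to showing that the Taylor expansion
$$\frac{1}{A(t)} \;=\; \frac{t}{-\log(1-t)} \;=\; \sum_{n=0}^\infty c_n t^n$$
has $c_0 = 1$ and $c_n \le 0$ for every $n \ge 1$.

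To establish this sign pattern I would invoke Kaluza's theorem: if $a_0 = 1$ and $\{a_n\}_{n \ge 0}$ is a strictly positive log-convex sequence, meaning $a_n^2 \le a_{n-1} a_{n+1}$ for all $n \ge 1$, then the reciprocal power series of $\sum a_n t^n$ has constant term $1$ and every subsequent Taylor coefficient $\le 0$. The hypothesis is easy to verify in our case: log-convexity of $a_n = 1/(n+1)$ reduces to the inequality $n(n+2) \le (n+1)^2$, which is the trivial $0 \le 1$.

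The main obstacle is Kaluza's theorem itself, for which the cleanest argument is probabilistic: log-convexity with $a_0 = 1$ forces $\{a_n\}$ to be a renewal sequence, i.e., there exist $f_1, f_2, \ldots \ge 0$ with $\sum f_n \le 1$ such that $A(t) = 1/(1 - F(t))$, where $F(t) = \sum_{n \ge 1} f_n t^n$. Since $F$ has non-negative coefficients and no constant term, $1/A(t) = 1 - F(t)$ exhibits exactly the required sign pattern $c_0 = 1$, $c_n = -f_n \le 0$. As a self-contained alternative, one can bypass Kaluza entirely by deriving the integral representation
$$c_n \;=\; -\int_0^\infty \frac{dx}{(1+x)^n \bigl(\pi^2 + \log^2 x\bigr)}, \qquad n \ge 1,$$
via a Hankel contour integral around the branch cut $[1,\infty)$ of $\log(1-t)$; the non-positivity of $c_n$ is then manifest from the integrand.
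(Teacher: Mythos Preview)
Your proposal is correct and follows exactly the route the paper intends: apply the preceding characterization theorem to the Dirichlet coefficients $a_n=1/(n+1)$. The paper itself supplies no argument beyond stating Theorem~\ref{CNPP} immediately after that characterization, so you are filling in the verification the paper omits; invoking Kaluza's lemma for the sign check on the reciprocal series is the standard device here (it is, for instance, how Agler and McCarthy handle the weighted Dirichlet scale in \cite{AgMc2}), and your alternative integral representation for the $c_n$---these are, up to sign, the classical Gregory coefficients---is also correct, though superfluous once Kaluza is in hand.
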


On the other hand one needs only compute a few of the $c_{n}$ to find out that:

\begin{remark}
The space $\mathcal{D}$ with the norm
\[
\left\vert\left\vert \left\vert \sum_{n=0}^{\infty}b_{n}z^{n} \right\vert\right\vert \right\vert  _{\mathcal{D}}%
^{2}=\left\vert b_{0}\right\vert ^{2}+\sum_{n=1}^{\infty}n\left\vert
b_{n}\right\vert ^{2}%
\]
does not have the CNPP.
\end{remark}

If a RKHS has the CNPP then a number of other subtle and interesting consequences follow. In particular, this applies for the Dirichlet space.  We refer the reader to the foundational article \cite{AgMc1} and to the beautiful monograph \cite{AgMc2} for a comprehensive introduction to spaces with the CNPP.

\section{\texorpdfstring{The Multiplier Space and other Spaces intrinsic to $\DDD$ Theory}{The Multiplier Space and other Spaces Intrinsic to Dirichlet Space Theory}}
\label{intrinseco}
\subsection{Multipliers}
Suppose $H$ is a RKHS of holomorphic functions in the disk. We say that a
function $m$ is a \textit{multiplier} (of $H$ or for $H$)\textit{\ }if
multiplication by $m$ maps $H$ boundedly to itself; that is there is a
$C=C(m)$ so that for all $h\in H$%
\[
\left\Vert mh\right\Vert _{H}\leq C\left\Vert h\right\Vert _{H}.
\]
Let $\mathcal{M}_{H}$ be the space of all multipliers of $H$ and for
$m\in\mathcal{M}_{H}$ let $\left\Vert m\right\Vert _{\mathcal{M}_{H}}$ be the
operator norm of the multiplication operator. With this norm $\mathcal{M}_{H}
$ is a commutative Banach algebra.

It is sometimes easy and sometimes difficult to get a complete description of
the multipliers of a given space $H.$ If the constant functions are in $H$
(they are, in the case of the Hardy and of the Dirichlet space), then $\mathcal{M}%
_{H}\subset H.$ In fact for $\left\Vert 1\right\Vert _{H}=1$ and hence the
inclusion is contractive: $\left\Vert m\right\Vert _{H}=\left\Vert
m\cdot1\right\Vert _{\mathcal{M}_{H}}\leq\left\Vert m\right\Vert
_{\mathcal{M}_{H}}\left\Vert 1\right\Vert _{H}=\left\Vert m\right\Vert
_{\mathcal{M}_{H}}. $

Also, for each of $\mathcal{D},$ $H^{2},$ and $A^{2}$ (the Bergman space) the multiplier algebra
is contractively contained in $H^{\infty},$
\[
\left\Vert m\right\Vert _{H^{\infty}}\leq\left\Vert m\right\Vert
_{\mathcal{M}_{H}}.
\]
One way to see this is by looking at the action of the adjoint of the
multiplication operator on reproducing kernels. Let $H$ be one of
$\mathcal{D},$ $H^{2},$ and $A^{2}$; let $m\in\mathcal{M}_{H}$ and let $M$ be
the operator of multiplication by $m$ acting on $H.$ Let $M^{\ast}$ be the
adjoint of the operator $M$. We select $\zeta,z\in\mathbb{D}$ and compute%
\begin{align*}
M^{\ast}k_{H,\zeta}(z)  & =\left\langle M^{\ast}k_{H,\zeta},k_{H,z}%
\right\rangle \\
& =\left\langle k_{H,\zeta},Mk_{H,z}\right\rangle \\
& =\left\langle k_{H,\zeta},mk_{H,z}\right\rangle \\
& =\overline{\left\langle mk_{H,z},k_{H,\zeta}\right\rangle }\\
& =\overline{m(\zeta)k_{H,z}\left(  \zeta\right)  }\\
& =\overline{m(\zeta)}k_{H,\zeta}(z).
\end{align*}
\it Thus $k_{H,\zeta}$ is an eigenvector of
$M^{\ast}$, the adjoint of the multiplication operator, 
with eigenvalue $\overline{m(\zeta)}.$ \rm Hence $\left\vert
m(\zeta)\right\vert \leq\left\Vert M^{\ast}\right\Vert =\left\Vert
M\right\Vert .$ Taking the supremum over $\zeta\in\mathbb{D}$ gives the
desired estimate.  For the Hardy space that is the full story; $\mathcal{M}_{H^{2}}=H^{\infty
}.$ In the Dirichlet case, things are a bit more complicated. 
\begin{proposition}\label{Stegenga}
A function $m$ is a multiplier for the Dirichlet space if and only if $m\in H^{\infty}$
and $d\mu_{m}(z)=\left\vert m^{\prime}(z)\right\vert ^{2}dxdy\in CM(\mathcal{D}%
).$
\end{proposition}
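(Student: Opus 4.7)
The plan is to prove both directions by decomposing $(mf)' = m'f + mf'$ and using the elementary inequality $|a+b|^2 \leq 2|a|^2+2|b|^2$ (equivalently, $|a|^2 \leq 2|a+b|^2+2|b|^2$) together with the norm decomposition $\|\cdot\|_\DDD^2 = \|\cdot\|_{\DDD,*}^2 + \|\cdot\|_{H^2}^2$.

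For the forward implication, suppose $m$ is a multiplier with operator norm $C$. The fact that $m \in H^\infty$ with $\|m\|_\infty \leq C$ was already established in the preceding discussion by evaluating $M^*$ on the reproducing kernels $K_z$, which are eigenvectors with eigenvalue $\overline{m(z)}$. To obtain the Carleson condition, I would write $m'f = (mf)' - mf'$ and estimate, for any $f \in \DDD$,
\[
\int_\DD |f|^2 d\mu_m = \int_\DD |m'f|^2 dA \leq 2\|mf\|_{\DDD,*}^2 + 2\|m\|_\infty^2\|f\|_{\DDD,*}^2 \leq 2(C^2 + \|m\|_\infty^2)\|f\|_\DDD^2,
\]
which is exactly the Carleson measure inequality \eqref{carleson} for $\mu_m$.

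For the converse, assume $m \in H^\infty$ and $\mu_m \in CM(\DDD)$. For $f \in \DDD$, applying $(mf)' = m'f + mf'$ and the same quadratic inequality,
\[
\|mf\|_{\DDD,*}^2 \leq 2\int_\DD |m'|^2|f|^2 dA + 2\int_\DD |m|^2|f'|^2 dA \leq 2[\mu_m]_{CM(\DDD)}\|f\|_\DDD^2 + 2\|m\|_\infty^2 \|f\|_{\DDD,*}^2.
\]
On the boundary side, $\|mf\|_{H^2}^2 \leq \|m\|_\infty^2\|f\|_{H^2}^2$. Summing the two estimates yields $\|mf\|_\DDD \leq C'\|f\|_\DDD$ with $C'$ depending only on $\|m\|_\infty$ and $[\mu_m]_{CM(\DDD)}$, so $m \in \MMM_\DDD$.

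There is no serious obstacle here; the proof is essentially an algebraic rearrangement of $(mf)'$ combined with the definitions of multiplier norm and Carleson measure. The only subtlety worth flagging is that one must use both pieces of the norm $\|\cdot\|_\DDD$: the semi-norm piece delivers the Carleson characterization, while the $H^2$ piece is controlled separately by the sup-norm of $m$. In particular, the Carleson condition on $|m'|^2 dA$ alone is not enough to control $\|mf\|_{H^2}$, which is why the hypothesis $m \in H^\infty$ cannot be dropped even though, by the forward direction, it is a consequence of being a multiplier.
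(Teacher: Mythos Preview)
Your argument is correct. The paper itself does not supply a proof of this proposition; it is stated without proof and attributed to Stegenga \cite{Ste}. What you have written is precisely the standard elementary argument: the Leibniz decomposition $(mf)'=m'f+mf'$ together with $|a+b|^2\le 2|a|^2+2|b|^2$ reduces both directions to the definitions, with the $H^\infty$ bound coming from the eigenvector relation $M^*K_z=\overline{m(z)}K_z$ already established in the text. Your final remark that $m\in H^\infty$ cannot be dropped is also well placed; indeed $\mu_m\in CM(\DDD)$ alone does not force boundedness (there exist unbounded functions in $\DDD$, and the Carleson condition only controls the semi-norm piece).
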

This was one of the motivations for Stegenga's study \cite{Ste} of the Carleson measures for the Dirichlet space. Observe that $\int_\DD d\mu_m=\|m\|_{\DDD,*}^2$.

Let us look again at the Hardy case, in the light of Stegenga's Proposition \ref{Stegenga}. Let $\chi_{H^2}$ be the space of the functions $m$ holomorphic in $\DD$ such that the measure
$d\lambda_m(z)=(1-|z|^2)|m^\prime(z)|^2dA(z)$ is a Carleson measure for the Hardy space. The reason for choosing such measure is that $\int_\DD d\lambda_m(z)\approx\|m\|_{H^2}^2$ (if $m(0)=0$), as in the Dirichlet case. Now, it is known that $\chi_{H^2}=BMOA$ is the space of the analytic functions in $BMO$. Proposition \ref{Stegenga} says that multiplier algebra of $\DDD$ is exactly 
$\chi\cap H^\infty$ (here, $\chi$ contains the functions $m$ such that. 
$d\mu_{m}(z)=\left\vert m^{\prime}(z)\right\vert ^{2}dxdy\in CM(\mathcal{D}$). The analogous result for $H^2$ would be that the multiplier space of $H^2$ consists of the functions in $BMOA$ which are essentially bounded. This is true, but not very interesting, since $H^\infty\subseteq BMOA$.
\subsection{\texorpdfstring{The Weakly Factored Space $\DDD\odot\DDD$ and its Dual}{Weakly Factored Space related to the Dirichlet Space and it Dual}}

\subsubsection{\texorpdfstring{Some Facts from $H^2$ Theory.}{Some Facts from Hardy Space Theory}} 

It is well known that some spaces of holomorphic 
functions naturally arise within $H^2$ theory: $H^1$, $H^\infty$, $BMO$. We shortly recall
some of their mutual connections. We have just seen that $H^\infty$ naturally arises as the multiplier algebra of $H^2$:  $Mult(H^2)=H^\infty$. On the other hand, by the inner/outer factorization of
$H^2$ functions it
easily follows that $H^1=H^2\cdot H^2$ is the space of products of $H^2$ functions. 
C. Fefferman's celebrated theorem says that $(H^1)^*=BMO$ is the space of analytic functions
with bounded mean oscillation. Functions in $BMO$ are defined by the well-known, elegant
integral property to which they owe their name, but could be otherwise defined as
the functions $b$ analytic in $\DD$ such that $d\mu_b=(1-|z|^2)|b^\prime(z)|^2dA(z)$ is a 
Carleson measure
for $H^2$:
$$
\int_{\DD}|f|^2d\mu_b\le C(\mu)\|f\|_{H^2}^2.
$$
The spaces just considered are linked with the Hankel forms and Nehari's Theorem. Given
analytic $b$, define the \it Hankel form \rm with symbol $b$ as
$$
T_b(f,g)=\langle b,fg\rangle_{H^2}.
$$
It was shown by Nehari that
$$
\sup_{f,g\in H^2}\frac{|T_b(f,g)|}{\|f\|_{H^2}\|g\|_{H^2}}\approx\|b\|_{(H^1)^*}\approx
\|b\|_{BMO},
$$
the last equality following from Fefferman's Theorem.
\subsubsection{Function Spaces Naturally Related with the Dirichlet Space.}
One might first think that since the Dirichlet space is naturally defined in terms of
hyperbolic geometry the spaces playing the r\^ole of $H^1$, $H^\infty$ and $BMO$ in Dirichlet
theory would be the Bloch space $\BBB$, defined by the (conformally invariant) norm:
$$
\|f\|_{\BBB}=\|\delta f\|_{L^\infty(\DD)}=\sup_{z\in\DD}(1-|z|^2)|f^\prime(z)|
$$
and similarly defined invariant spaces (analytic Besov spaces). It turns out that, from the viewpoint of 
Hilbert space function theory, the relevant spaces are others. An \it a priori \rm reason to guess that Bloch and Besov spaces 
do not play in the Dirichlet theory the r\^ole played by the Hardy spaces $H^p$ ($1\le p\le\infty$) in $H^2$ theory is that inclusions go the wrong way. For instance, $H^\infty\subset H^2$, while $\DDD\subset\BBB$.

\smallskip

Define the weakly factored space $\mathcal{D}\odot\mathcal{D}$
to be the completion of finite sums $h=\sum f_{j}g_{j}$ using the norm
\[
\left\Vert h\right\Vert _{\mathcal{D}\odot\mathcal{D}}=\inf\left\{
\sum\left\Vert f_{j}\right\Vert _{\mathcal{D}}\left\Vert g_{j}\right\Vert
_{\mathcal{D}}:h=\sum f_{j}g_{j}\right\}  .
\]
In particular if $f\in\mathcal{D}$ then $f^{2}\in\mathcal{D}\odot\mathcal{D}$
and
\begin{equation}
\left\Vert f^{2}\right\Vert _{\mathcal{D}\odot\mathcal{D}}\leq\left\Vert
f\right\Vert _{\mathcal{D}}^{2}. \label{square}%
\end{equation}
It is immediate that, in the Hardy case, 
$H^{2}\odot H^{2}=H^2\cdot H^2=H^1$.

We also introduce a variant of $\mathcal{D}\odot\mathcal{D}$. Define the space
$\partial^{-1}\left(  \partial\mathcal{D}\odot\mathcal{D}\right)  $ to be the
completion of the space of functions $h$ such that $h^{\prime}$ can be written
as a finite sum, $h^{\prime}=\sum f_{j}^{\prime}g_{j}$ (and thus
$h=\partial^{-1}\sum\left(  \partial f_{i}\right)  g_{i})$, with the norm
\[
\left\Vert h\right\Vert _{\partial^{-1}\left(  \partial\mathcal{D}%
\odot\mathcal{D}\right)  }=\inf\left\{  \sum\left\Vert f_{j}\right\Vert
_{\mathcal{D}}\left\Vert g_{j}\right\Vert _{\mathcal{D}}:h^{\prime}=\sum
f_{j}^{\prime}g_{j}\right\}  .
\]
We next introduce the space $\mathcal{X}$ which plays a role in the Dirichlet
space theory analogous to the role of $BMO$ in the Hardy space theory. We say
$f\in\mathcal{X}$ if
\[
\left\Vert f\right\Vert _{\mathcal{X}}^{2}=\left\vert f(0)\right\vert
^{2}+\left\Vert \left\vert f^{\prime}\right\vert ^{2}dA\right\Vert
_{CM(\mathcal{D)}}<\infty.
\]
We denote the closure in $\mathcal{X}$ of the space of polynomials by
$\mathcal{X}_{0}.$

Here is a summary of relations between the spaces. The duality pairings are
with respect to the Dirichlet pairing $\left\langle \cdot,\cdot\right\rangle
_{\mathcal{D}}.$

\begin{theorem}
\label{duality} We have

\begin{enumerate}
\item $\mathcal{X}_{0}^{\ast}=\mathcal{D}\odot\mathcal{D}$;

\item $\left(  \mathcal{D}\odot\mathcal{D}\right)  ^{\ast}=\mathcal{X}$;

\item $\mathcal{M}(\mathcal{D})=H^{\infty}\cap\mathcal{X}$;

\item $\mathcal{D}\odot\mathcal{D=}$ $\partial^{-1}\left(  \partial
\mathcal{D}\odot\mathcal{D}\right)  $.
\end{enumerate}

\begin{proof}
[Proof discussion]As we mentioned (3) is proved in \cite{Ste}.

A result essentially equivalent to $\left(  \partial^{-1}\left(
\partial\mathcal{D}\odot\mathcal{D}\right)  \right)  ^{\ast}=\mathcal{X}$ was
proved by Coifman-Muri \cite{CoMu} using real variable techniques and in more
function theoretic contexts by Tolokonnikov \cite{Tolo} and by Rochberg and Wu
 in \cite{RW}. An interesting alternative approach to the result is given by Treil
and Volberg in \cite{TV}.

In \cite{W1} it is shown that $\mathcal{X}_{0}^{\ast}=\partial^{-1}\left(
\partial\mathcal{D}\odot\mathcal{D}\right)  .$ Item (2) is proved in
\cite{ARSW1} and when that is combined with the other results we obtain (1) and (4).
\end{proof}
\end{theorem}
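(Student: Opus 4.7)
My plan is to prove (3) first using Stegenga's Proposition \ref{Stegenga} together with the reproducing-kernel argument already presented in the excerpt, then establish (2) via the Dirichlet pairing $\langle\cdot,\cdot\rangle_{\mathcal{D}}$, and finally derive (4) and (1) by combining (2) with the cited theorem of Wu identifying $\mathcal{X}_{0}^{*}$. For (3), the general reproducing-kernel eigenvalue identity $M^{*}K_\zeta=\overline{m(\zeta)}K_\zeta$ gives the contractive inclusion $\mathcal{M}(\mathcal{D})\hookrightarrow H^{\infty}$. Conversely, expanding $(mf)'=m'f+mf'$ and using $(a+b)^{2}\le 2a^{2}+2b^{2}$ gives
\[
\|mf\|_{\mathcal{D},*}^{2}\le 2\|m\|_\infty^{2}\|f\|_{\mathcal{D},*}^{2}+2\int_\DD |f|^{2}\,|m'|^{2}\,dA,
\]
so $m\in H^{\infty}\cap\mathcal{X}$ implies $m\in\mathcal{M}(\mathcal{D})$; the forward inclusion in Stegenga's proposition handles the reverse direction.

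For (2), the easy direction $\mathcal{X}\hookrightarrow(\mathcal{D}\odot\mathcal{D})^{*}$ proceeds by expanding the Dirichlet pairing: for $h=\sum_{j}f_{j}g_{j}$ and $b\in\mathcal{X}$,
\[
\langle h,b\rangle_{\mathcal{D}}=h(0)\overline{b(0)}+\sum_{j}\int_\DD(f_{j}'g_{j}+f_{j}g_{j}')\overline{b'}\,dA,
\]
and each cross term is controlled by Cauchy-Schwarz combined with the defining Carleson estimate for $|b'|^{2}\,dA$:
\[
\Bigl|\int f_{j}'g_{j}\overline{b'}\,dA\Bigr|\le\|f_{j}\|_{\mathcal{D},*}\Bigl(\int|g_{j}|^{2}|b'|^{2}\,dA\Bigr)^{1/2}\lesssim\|b\|_{\mathcal{X}}\|f_{j}\|_{\mathcal{D}}\|g_{j}\|_{\mathcal{D}},
\]
and analogously for the symmetric term, whence $|\langle h,b\rangle_{\mathcal{D}}|\lesssim\|b\|_{\mathcal{X}}\|h\|_{\mathcal{D}\odot\mathcal{D}}$ after taking the infimum over factorizations of $h$.

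The hard direction $(\mathcal{D}\odot\mathcal{D})^{*}\hookrightarrow\mathcal{X}$ is a Dirichlet analog of Nehari's theorem and is the step I expect to be the principal obstacle. The strategy is to reduce to the single-box testing characterization of Theorem \ref{newtestingcondition}: to establish $d\mu_{b}:=|b'|^{2}\,dA\in CM(\mathcal{D})$ it suffices to verify the inequality \eqref{eqnewtesting} for $d\mu_{b}$ over each Carleson box $\overline{S(\zeta)}$. For each such box I would construct a factored test function $fg$ with $f,g\in\mathcal{D}$ and $\|f\|_{\mathcal{D}}\|g\|_{\mathcal{D}}$ comparable to $\mu_{b}(\overline{S(\zeta)})^{1/2}$, arranged so that $\langle b,fg\rangle_{\mathcal{D}}$ reproduces up to admissible error the testing integral in \eqref{eqnewtesting}. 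This is precisely where the Complete Nevanlinna-Pick Property of $\mathcal{D}$ (Theorem \ref{CNPP}) enters crucially: it permits the holomorphic factorization of the relevant truncated reproducing-kernel combinations into products of two $\mathcal{D}$-functions with controlled norms, a step that has no counterpart in the Bergman-type setting. The hypothesized bound $|\langle b,fg\rangle_{\mathcal{D}}|\le\|b\|_{(\mathcal{D}\odot\mathcal{D})^{*}}\|f\|_{\mathcal{D}}\|g\|_{\mathcal{D}}$ then translates into the required testing inequality.

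Granted (2), items (4) and (1) follow formally. The inclusion $\mathcal{D}\odot\mathcal{D}\subseteq\partial^{-1}(\partial\mathcal{D}\odot\mathcal{D})$ is immediate, since $(fg)'=f'g+g'f$ represents the derivative in the required form with norm at most $2\|f\|_{\mathcal{D}}\|g\|_{\mathcal{D}}$. For the reverse, (2) together with the cited Coifman-Murai-Tolokonnikov-Rochberg-Wu identification $(\partial^{-1}(\partial\mathcal{D}\odot\mathcal{D}))^{*}=\mathcal{X}$ shows that both Banach spaces are preduals of $\mathcal{X}$ via the same Dirichlet pairing; since polynomials are dense in each, comparing dual norms yields equivalence of the two primal norms and hence equality of the spaces. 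Finally (1) follows from (4) composed with Wu's theorem: $\mathcal{X}_{0}^{*}=\partial^{-1}(\partial\mathcal{D}\odot\mathcal{D})=\mathcal{D}\odot\mathcal{D}$.
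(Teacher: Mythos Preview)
Your overall logical architecture matches the paper's proof discussion exactly: (3) via Stegenga, (2) established directly, and then (1) and (4) obtained by combining (2) with the cited identifications $(\partial^{-1}(\partial\mathcal{D}\odot\mathcal{D}))^{*}=\mathcal{X}$ and $\mathcal{X}_{0}^{*}=\partial^{-1}(\partial\mathcal{D}\odot\mathcal{D})$. Your predual argument for (4)---a contractive dense inclusion between two Banach spaces whose duals coincide under the same pairing must be an isomorphism---is a clean way to make explicit what the paper compresses into ``when that is combined with the other results.'' The easy direction of (2) is also fine.

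The one substantive divergence is your sketch of the hard direction of (2). You propose to verify the testing condition \eqref{eqnewtesting} for $|b'|^{2}dA$ by using the CNPP to ``factorize truncated reproducing-kernel combinations'' into a product $fg$ whose pairing with $b$ reproduces the testing integral. This is not the mechanism the paper is pointing to: item (2) is attributed to \cite{ARSW1}, and as the paper itself notes elsewhere, the argument there goes through the tree model and relies on holomorphic approximation of the discrete potentials that are extremal for tree capacity, not on CNPP factorization. More concretely, your sketch has a structural gap: the quantity $\langle b,fg\rangle_{\mathcal{D}}$ is linear in $b'$, whereas the testing integral in \eqref{eqnewtesting} is quadratic in $|b'|$, so at least one of $f,g$ must already encode $b'\chi_{S(\zeta)}$; once that is the case, the claimed norm control $\|f\|_{\mathcal{D}}\|g\|_{\mathcal{D}}\approx\mu_{b}(\overline{S(\zeta)})^{1/2}$ is precisely the difficulty, and CNPP does not obviously supply it. As written, the step ``reproduces up to admissible error the testing integral'' is where the entire theorem lives.
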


Statement (2) of the theorem is the analog of Nehari's characterization of
bounded Hankel forms on the Hardy space, recast using the identification
$H^{2}\odot H^{2}=H^{1}$ and Fefferman's duality theorem. Item (1) is the
analog of Hartman's characterization of compact Hankel forms. Statement (4) is
similar in spirit to the weak factorization result for Hardy spaces given by
Aleksandrov and Peller in \cite{AP} where they study Foguel-Hankel operators
on the Hardy space.

Given the previous theorem it is easy to check the inclusions%
\begin{equation}
\mathcal{M}(\mathcal{D})\subset\mathcal{X\subset D\subset D}\odot\mathcal{D}
\label{dirinc}%
\end{equation}

In our paper \cite{ARSW2} we discuss more facts about these spaces.
\subsection{The Corona Theorem}

In 1962 Lennart Carleson demonstrated in \cite{Car4} the absence of a corona
in the maximal ideal space of $H^{\infty }$ by
showing that if $\left\{ g_{j}\right\} _{j=1}^{N}$ is a finite set of
functions in $H^{\infty } $ satisfying 
\begin{equation}
\sum_{j=1}^{N}\left\vert g_{j}\left( z\right) \right\vert \geq
\delta>0,\;\;\;\;\;z\in \mathbb{D},  \label{coronadata}
\end{equation}
then there are functions $\left\{ f_{j}\right\} _{j=1}^{N}$ in $H^{\infty
} $ with 
\begin{equation}
\sum_{j=1}^{N}f_{j}\left( z\right) g_{j}\left( z\right) =1,\;\;\;\;\;z\in 
\mathbb{D}.  \label{coronasolutions}
\end{equation}

While not immediately obvious, the result of Carleson is in fact equivalent to the following 
statement about the Hilbert space $H^2$.  If one is given a finite set of functions 
$\{g_j\}_{j=1}^N$ in $H^\infty$ satisfying \eqref{coronadata} and a function 
$h\in H^2$, then there are functions $\{f_j\}_{j=1}^N$ in $H^2$ with
\begin{equation}
\sum_{j=1}^{N}f_{j}\left( z\right) g_{j}\left( z\right) =h(z),\;\;\;\;\;z\in 
\mathbb{D}.  \label{coronasolutions_Hilbert}
\end{equation} 
The key difference between \eqref{coronasolutions} and \eqref{coronasolutions_Hilbert} is that 
one is solving the problem in the Hilbert space setting as opposed to the multiplier algebra, 
which makes the problem somewhat easier.

In this section we discuss the Corona Theorem for the multiplier algebra of the Dirichlet space.  The method of proof will be intimately connected with the resulting statements for $H^\infty$ and $H^2$.  We also will connect this result to a related statement for the Hilbert space $\mathcal{D}$.  The proof of this fact is given by $\overline{\partial}$-methods and the connections between Carleson measures for the space $\mathcal{D}$.  Another proof can be given by simply proving the Hilbert space version directly and then applying the Toeplitz Corona Theorem.  Implicit in both versions are certain solutions to $\overline{\partial}$-problems that arise.

\subsubsection{The $\overline{\partial}$-equation in the Dirichlet Space}
\label{sec:dbar}
As is well-known there is an intimate connection between the Corona Theorem and 
$\overline{\partial}$-problems.  In our context, a $\overline{\partial}$-problem will be to 
solve the following differential equation
\begin{equation}
\label{dbar}
\overline{\partial}b=\mu
\end{equation}
where $\mu$ is a Carleson measure for the space $\mathcal{D}$ and $b$ is some unknown function.  Now solving this problem is an easy application of Cauchy's formula, however we will need to obtain estimates of the solutions.  Tho obtain these estimates, one needs a different solution operator to the $\overline{\partial}$-problem more appropriately suited to our contexts.

In \cite{Xia} Xiao's constructed a non-linear solution operator for \eqref{dbar} 
that is well adapted to solve \eqref{dbar} and obtain estimates.  We note that in the case 
of $H^\infty$ that this result was first obtained by P. Jones, \cite{J}.  
First, note that 
$$
F\left( z\right) =\frac{1}{2\pi i}\iint_{\mathbb{D}}
\frac{d\mu\left( \zeta \right) }{\zeta -z}%
d\zeta \wedge d\overline{\zeta }
$$
satisfies $\overline{\partial }F=\mu$ in the sense of distribution.

The difficulty with this solution kernel is that it does not allow for one to obtain good 
estimates on the solution.  To rectify this, following Jones \cite{J}, we define a new 
non-linear kernel that will overcome this difficulty.  

\begin{theorem}[Jones, \cite{J}]
\label{continuousJones'}
Let $\mu $ be a complex $H^2$ Carleson measure on $\mathbb{D}$. 
Then with $S\left( \mu \right)(z) $ given by
\begin{equation}
\label{Jonessolutionop}
S\left( \mu \right) \left( z\right) =\iint_{\mathbb{D}}K\left(\sigma, z, \zeta\right) 
d\nu \left( \zeta \right)
\end{equation}%
where $\sigma =\frac{\left\vert \mu \right\vert }{\left\Vert \mu \right\Vert
_{CM(H^2)}}$ and%
\begin{equation*}
K\left( \sigma ,z,\zeta \right) \equiv \frac{2i}{\pi }\frac{1-\left\vert
\zeta \right\vert ^{2}}{\left( z-\zeta \right) \left( 1-\overline{\zeta }%
z\right) }\exp \left\{\iint_{\left\vert \omega \right\vert \geq
\left\vert \zeta \right\vert }\left( -\frac{1+\overline{\omega }z}{1-%
\overline{\omega }z}+\frac{1+\overline{\omega }\zeta }{1-\overline{\omega }%
\zeta }\right) d\sigma \left( \omega \right) \right\} ,
\end{equation*}%
we have that:
\begin{enumerate}
\item $S\left( \mu \right) \in L_{loc}^{1}\left( \mathbb{D}\right) $.

\item $\overline{\partial}S\left(\mu\right) =\mu $ in
the sense of distributions.

\item $\iint_{\mathbb{D}}\left\vert K\left( \frac{\left\vert \mu
\right\vert }{\left\Vert \mu \right\Vert _{Car}},x,\zeta \right) \right\vert
d\left\vert \mu \right\vert \left( \zeta \right) \lesssim \left\Vert \mu
\right\Vert _{CM(H^2)}$ for all $x\in \mathbb{T}=\partial \mathbb{D}$,\\
\item[] so $\left\Vert S\left( \mu \right) \right\Vert _{L^{\infty }
\left( \mathbb{T}\right) }\lesssim\left\Vert \mu \right\Vert _{CM(H^2)}$.
\end{enumerate}
\end{theorem}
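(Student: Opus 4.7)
The plan is to verify parts (1)--(2) as essentially formal computations, and to concentrate the main effort on (3), from which (4) follows at once by integrating over $\zeta$ after taking $L^\infty$ in $x$. For (2), the exponential factor in $K$ is holomorphic in $z$ for each fixed $\zeta$, since $\frac{1+\bar\omega z}{1-\bar\omega z}= -1+\frac{2}{1-\bar\omega z}$ is holomorphic in $z$; and at $z=\zeta$ the exponent vanishes, so $e^{(\cdots)}|_{z=\zeta}=1$. On the other hand, the Cauchy-like prefactor decomposes as
\[
\frac{1-|\zeta|^2}{(z-\zeta)(1-\bar\zeta z)}=\frac{1}{z-\zeta}+\frac{\bar\zeta}{1-\bar\zeta z},
\]
with a holomorphic second summand, so $\overline{\partial}_z$ of the prefactor equals $\pi\,\delta_\zeta$ in distributions. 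Putting the two observations together gives $\overline{\partial}_z K(\sigma,z,\zeta)=2i\,\delta_\zeta(z)$, and integrating against $d\nu(\zeta)$ (normalized so the constant matches $\mu$) yields $\overline{\partial}S(\mu)=\mu$. The $L^1_{\rm loc}$ statement (1) follows from the same decomposition, together with the fact that on a compact subset of $\DD$ the exponent is uniformly bounded (because $\sigma$ is a finite Carleson measure and the integrand is dominated by $\|\sigma\|_{CM(H^2)}$ times the Poisson-kernel-type quantity $\frac{1-|\omega|^2}{|1-\bar\omega z|^2}$, which is bounded on compacta of $\DD$).

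The real work is (3). Let $x\in\mathbb T$ and write $\zeta\in\DD$. Using $|x-\zeta||1-\bar\zeta x|=|1-\bar x\zeta|^2$, the modulus of $K$ at $z=x$ is
\[
|K(\sigma,x,\zeta)|=\frac{2}{\pi}\frac{1-|\zeta|^2}{|1-\bar x\zeta|^2}\,e^{\operatorname{Re} h(x,\zeta)},
\]
where, setting $P_\omega(y)=\frac{1-|\omega|^2}{|1-\bar\omega y|^2}$ and $Q_\omega(\zeta)=\operatorname{Re}\frac{1+\bar\omega\zeta}{1-\bar\omega\zeta}=\frac{1-|\omega|^2|\zeta|^2}{|1-\bar\omega\zeta|^2}$, one computes
\[
\operatorname{Re} h(x,\zeta)=\int_{|\omega|\ge|\zeta|}\bigl(Q_\omega(\zeta)-P_\omega(x)\bigr)\,d\sigma(\omega).
\]
The plan is to exploit that $\sigma=|\mu|/\|\mu\|_{CM(H^2)}$ has Carleson norm $1$, so that the balayage
\[
U(\zeta):=\int_{|\omega|\ge|\zeta|}P_\omega(x)\,d\sigma(\omega)
\]
is dominated by a universal constant for every $\zeta$, and is monotone decreasing in $|\zeta|$ (increasing as $|\zeta|\to 0$).

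For the main inequality, normalize $d|\mu|=\|\mu\|_{CM(H^2)}\,d\sigma$ and reduce the claim to
\[
J(x):=\iint_{\DD}\frac{1-|\zeta|^2}{|1-\bar x\zeta|^2}\,e^{\operatorname{Re} h(x,\zeta)}\,d\sigma(\zeta)\;\lesssim\;1.
\]
The idea I would use is a layer-cake (stopping-time) argument keyed to the level sets of $U$. Split the first term of $\operatorname{Re} h$ using the pointwise bound $Q_\omega(\zeta)\le C\,P_\omega(\zeta/|\zeta|)$ valid for $|\omega|\ge|\zeta|$, so that $\int_{|\omega|\ge|\zeta|}Q_\omega(\zeta)\,d\sigma(\omega)$ is itself controlled by a Carleson balayage of $\sigma$ evaluated on the circle, hence by a uniform constant. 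This reduces matters to showing
\[
\iint_{\DD}\frac{1-|\zeta|^2}{|1-\bar x\zeta|^2}\,e^{-U(\zeta)}\,d\sigma(\zeta)\;\lesssim\;1.
\]
Observe that the measure $d\tau(\zeta):=\frac{1-|\zeta|^2}{|1-\bar x\zeta|^2}\,d\sigma(\zeta)$ satisfies $\tau(\{|\zeta|\ge r\})=-\int_1^r dU=U(r)-U(1)=U(r)$, so by the layer-cake formula
\[
\iint e^{-U(\zeta)}\,d\tau(\zeta)=\int_0^\infty \tau\bigl(\{U<s\}\bigr)\,e^{-s}\,ds \le \int_0^\infty\min\{U(0),s\}\,e^{-s}\,ds\lesssim 1,
\]
using that $\tau(\{U(\zeta)<s\})$ is controlled by $s$ because $U$ is essentially the $\tau$-distribution function (after a change of variable $|\zeta|\to U(|\zeta|)$, the measure $\tau$ has density one with respect to $dU$).

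The hard part here is the pointwise comparison $Q_\omega(\zeta)\lesssim P_\omega(\zeta/|\zeta|)$ and the change-of-variables that identifies $\tau$ with essentially Lebesgue measure in the $U$-coordinate. These are the two key computational lemmas where the precise form of Jones' kernel is used; once they are in place, the layer-cake step is a short calculation and (3) (and hence (4)) follows. Item (2) then becomes a clean distributional computation.
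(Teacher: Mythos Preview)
The paper does not prove this theorem; it is quoted from Jones \cite{J} and used as a black box. So there is no proof in the paper to compare against. Your outline is in fact essentially Jones' own argument: the exponential factor is holomorphic in $z$ and equals $1$ at $z=\zeta$, so $\overline\partial_z K$ reduces to $\overline\partial_z$ of the Cauchy-type prefactor, giving (2); and for (3) the core trick is the change of variables $r\mapsto U(r)$ that turns $\int P_\zeta(x)e^{-U(|\zeta|)}d\sigma(\zeta)$ into $\int_0^{U(0)}e^{-u}\,du\le 1$. That part of your plan is correct.

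There is, however, a genuine gap in your treatment of the $Q$-term. The claimed pointwise bound $Q_\omega(\zeta)\lesssim P_\omega(\zeta/|\zeta|)$ for $|\omega|\ge|\zeta|$ is false: take $\omega=1-\epsilon$ and $\zeta=-(1-\delta)$ with $\epsilon\le\delta$, so that $Q_\omega(\zeta)\approx\delta$ while $P_\omega(-1)\approx\epsilon$. Worse, even if the bound held, the quantity $\int_\DD P_\omega(e^{i\theta})\,d\sigma(\omega)$ is the balayage of a Carleson measure at a boundary point, which is only in $BMO$, not $L^\infty$; so your proposed route to a uniform constant would still fail. The correct (and simpler) estimate is
\[
Q_\omega(\zeta)=\frac{1-|\omega|^2|\zeta|^2}{|1-\bar\omega\zeta|^2}\le\frac{2(1-|\zeta|^2)}{|1-\bar\zeta\omega|^2}
\qquad(|\omega|\ge|\zeta|),
\]
and then, testing the Carleson embedding on the $H^2$ reproducing kernel $K^{H^2}_\zeta(\omega)=(1-\bar\zeta\omega)^{-1}$,
\[
\int_{|\omega|\ge|\zeta|}Q_\omega(\zeta)\,d\sigma(\omega)
\le 2(1-|\zeta|^2)\int_\DD|K^{H^2}_\zeta(\omega)|^2\,d\sigma(\omega)
\le 2(1-|\zeta|^2)\,\|\sigma\|_{CM(H^2)}\,\|K^{H^2}_\zeta\|_{H^2}^2
=2,
\]
since $\|\sigma\|_{CM(H^2)}=1$. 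With this fix in place, your layer-cake argument for $\int P_\zeta(x)e^{-U(|\zeta|)}d\sigma(\zeta)\le 1$ goes through (indeed, the direct substitution $u=U(r)$ gives $\int_0^{U(0)}e^{-u}\,du$ without needing the distributional detour), and (3) follows. The same $Q$-bound also justifies the uniform upper bound on $\operatorname{Re}(\text{exponent})$ that you need for (1).
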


With this set-up, we now state the following theorem due to Xiao, extending Theorem 
\ref{continuousJones'}, about estimates for $\overline{\partial}$-problems in the 
Dirichlet space.

\begin{theorem}[Xiao, \cite{Xia}]
\label{dbarwithestimates}
If $\left|g(z)\right|^2dA(z)$ is a $\mathcal{D}$-Carleson measure then the function 
$S\left(g(z)dA \right)(z)$ satisfies $\overline{\partial}S( g(z)dA)=g$ and 
$$
\left\| S( g(z)dA)\right\|_{M({\mathcal H}^{1/2}({\mathbb S}))}
\lesssim\| \left|g(z)\right|^2dA(z)\|_{CM(\mathcal{D})}.
$$
Here, $M({\mathcal H}^{1/2}({\mathbb S}))$ is the multiplier algebra of the fractional Sobolev space ${\mathcal H}^{1/2}({\mathbb S})$.
\end{theorem}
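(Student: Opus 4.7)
The identity $\overline{\partial} S(g\,dA) = g$ is essentially a restatement of part (2) of Jones' Theorem \ref{continuousJones'}: the exponential weight in $K(\sigma,z,\zeta)$ is holomorphic in $z$, so $\overline{\partial}_z K$ inherits only the Cauchy pole $\tfrac{1}{z-\zeta}$, and the standard $\overline{\partial}$-calculus yields the reproduction formula. Differentiating under the integral sign is legitimized by item (3) of Theorem \ref{continuousJones'} together with the finiteness of $|g|^2\,dA$ as a $\DDD$-Carleson measure.

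For the multiplier norm bound, the plan is to use an analogue of Stegenga's characterization (Proposition \ref{Stegenga}) for $M(\HHH^{1/2}(\SS))$: a function $b$ on $\DD$, viewed through its boundary values, satisfies
$$\|b\|_{M(\HHH^{1/2}(\SS))} \lesssim \|b\|_{L^{\infty}(\SS)} + \bigl\||\nabla b|^2\,dA\bigr\|_{CM(\DDD)}^{1/2}.$$
Thus the theorem reduces to the two estimates
$$\|S(g\,dA)\|_{L^{\infty}(\SS)}^{2} \lesssim \bigl\||g|^2\,dA\bigr\|_{CM(\DDD)}, \qquad \bigl\||\nabla S(g\,dA)|^2\,dA\bigr\|_{CM(\DDD)} \lesssim \bigl\||g|^2\,dA\bigr\|_{CM(\DDD)}.$$

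The $L^{\infty}$ bound is the $\DDD$-analogue of part (4) of Jones' theorem: the pointwise estimate on $|K(\sigma,z,\zeta)|$ is combined with a Cauchy--Schwarz in $\zeta$ and with the Dirichlet-Carleson testing condition \eqref{eqnewtesting} applied to $|g|^2\,dA$, so that the endpoint bound involves $\||g|^2\,dA\|_{CM(\DDD)}^{1/2}$ in place of Jones' $\|\cdot\|_{CM(H^2)}$. For the Carleson measure estimate on $|\nabla S(g\,dA)|^2\,dA$, the $\overline{\partial}$-component is exactly $g$ and $|g|^2\,dA \in CM(\DDD)$ by hypothesis, so the real work is on the $\partial$-component $\partial_z S(g\,dA)$, whose integrand involves the derivatives of both the Cauchy factor $\tfrac{1}{(z-\zeta)(1-\overline{\zeta}z)}$ and the exponential shield in $K$.

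The main obstacle is this Carleson bound on $|\partial_z S(g\,dA)|^2\,dA$. The exponential factor in Jones' kernel must be exploited to truncate the $\zeta$-integral outside a fixed Carleson box $\overline{S(\zeta_0)}$, rendering the tail harmless; the contribution from $\zeta \in \overline{S(\zeta_0)}$ is then controlled by the testing condition \eqref{eqnewtesting}, or equivalently the discrete condition \eqref{treedual} on the Bergman tree, applied to the measure $|g|^2\,dA$ restricted to $\overline{S(\zeta_0)}$. The combinatorial bookkeeping on Whitney boxes from Section 4 is the heart of Xiao's extension of Jones' argument; I do not expect any conceptually new ingredient beyond that machinery.
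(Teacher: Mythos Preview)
The paper does not prove this theorem; it is stated as a result of Xiao with a citation to \cite{Xia} and used as a black box in the discussion of the Corona theorem. There is therefore no proof in the paper against which to compare your proposal.

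That said, your outline is in the spirit of how such results are obtained: reduce the $M(\HHH^{1/2}(\SS))$ norm to an $L^\infty(\SS)$ bound plus a $\DDD$-Carleson condition on $|\nabla b|^2\,dA$, and then control each piece via the structure of Jones' kernel together with the testing condition \eqref{eqnewtesting}. One point in your sketch deserves explicit attention. The operator $S$ in Theorem~\ref{continuousJones'} is built with the normalization $\sigma=|\mu|/\|\mu\|_{CM(H^2)}$, which presupposes that $|g|^2\,dA$ is an $H^2$-Carleson measure. A $\DDD$-Carleson measure need not be $H^2$-Carleson (compare the logarithmic simple condition \eqref{simplecondition} with Carleson's linear box condition for $H^2$), so the exponential shield in $K$ must be renormalized by a $\DDD$-Carleson quantity before any of the subsequent estimates make sense. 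This adaptation is precisely what Xiao carries out in \cite{Xia}; your proposal uses ``Jones' kernel'' as if it applies unchanged, and that step should be flagged rather than absorbed into the phrase ``the $\DDD$-analogue of part (4)''.
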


\subsubsection{Corona Theorems and Complete Nevanlinna-Pick Kernels}
\label{sec:CoronaNP}

Let $X$ be a Hilbert space of holomorphic functions in an open set $\Omega $
in $\mathbb{C}^{n}$ that is a reproducing kernel Hilbert space with a \emph{%
complete irreducible Nevanlinna-Pick kernel} (see \cite{AgMc2} for the
definition). The following \emph{Toeplitz corona theorem} is due to Ball,
Trent and Vinnikov \cite{BTV} (see also Ambrozie and Timotin \cite{AT}
and Theorem 8.57 in \cite{AgMc2}).

For $f=\left( f_{\alpha }\right) _{\alpha =1}^{N}\in \oplus ^{N}X$ and $h\in
X$, define $\mathbb{M}_{f}h=\left( f_{\alpha }h\right) _{\alpha =1}^{N}$ and%
\begin{equation*}
\left\Vert f\right\Vert _{Mult\left( X,\oplus ^{N}X\right) }=\left\Vert 
\mathbb{M}_{f}\right\Vert _{X\rightarrow \oplus ^{N}X}=\sup_{\left\Vert
h\right\Vert _{X}\leq 1}\left\Vert \mathbb{M}_{f}h\right\Vert _{\oplus
^{N}X}.
\end{equation*}%
Note that $\max_{1\leq \alpha \leq N}\left\Vert \mathcal{M}_{f_{\alpha
}}\right\Vert _{M_{X}}\leq \left\Vert f\right\Vert _{Mult\left( X,\oplus
^{N}X\right) }\leq \sqrt{\sum_{\alpha =1}^{N}\left\Vert \mathcal{M}%
_{f_{\alpha }}\right\Vert _{M_{X}}^{2}}$.

\begin{theorem}[Toeplitz Corona Theorem] 
\label{ToeCorThm}
Let $X$ be a Hilbert function space in an open set $\Omega $ in $\mathbb{C}^{n}$ with an
irreducible complete Nevanlinna-Pick kernel. Let $\delta >0$ and $N\in 
\mathbb{N}$. Then $g_{1},\ldots,g_{N}\in M_{X}$ satisfy the following ``baby
corona property''; for every $h\in X$, there are $f_{1}, \ldots,f_{N}\in X$ such that 
\begin{eqnarray}
\left\Vert f_{1}\right\Vert _{X}^{2}+\cdots+\left\Vert f_{N}\right\Vert
_{X}^{2} &\leq &\frac{1}{\delta }\left\Vert h\right\Vert _{X}^{2},
\label{corcon} \\
g_{1}\left( z\right) f_{1}\left( z\right) +\cdots+g_{N}\left( z\right)
f_{N}\left( z\right) &=&h\left( z\right) ,\ \ \ \ \ z\in \Omega ,  \notag
\end{eqnarray}%
\emph{if and only if} $g_{1},\ldots,g_{N}\in M_{X}$ satisfy the following
``multiplier corona property''; there are $\varphi _{1},\ldots,\varphi _{N}\in
M_{X}$ such that%
\begin{eqnarray}
\left\Vert \varphi \right\Vert _{Mult\left( X,\oplus ^{N}X\right) } &\leq &1,
\label{multcorcon} \\
g_{1}\left( z\right) \varphi _{1}\left( z\right) +\cdots+g_{N}\left( z\right)
\varphi _{N}\left( z\right) &=&\sqrt{\delta },\ \ \ \ \ z\in \Omega .  \notag
\end{eqnarray}
\end{theorem}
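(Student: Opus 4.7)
The implication (\ref{multcorcon}) $\Rightarrow$ (\ref{corcon}) is immediate. Given multipliers $\varphi_j$ with $\|\mathbb{M}_\varphi\|_{X\to \oplus^N X}\le 1$ and $\sum g_j\varphi_j=\sqrt{\delta}$, set $f_j=\varphi_j h/\sqrt{\delta}$ for $h\in X$. Then $\sum g_j f_j=h$ and
$$\sum_{j=1}^N\|f_j\|_X^2=\delta^{-1}\|\mathbb{M}_\varphi h\|_{\oplus^N X}^2\le \delta^{-1}\|\mathbb{M}_\varphi\|^2\|h\|_X^2\le \delta^{-1}\|h\|_X^2,$$
which is exactly (\ref{corcon}).

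For the converse, my plan is to reformulate both conditions as positivity conditions on reproducing kernels and bridge them via the CNPP. Let $M_g\colon \oplus^N X\to X$, $(f_j)\mapsto \sum g_j f_j$. The baby corona condition is equivalent to $M_g$ admitting a right inverse of norm at most $\delta^{-1/2}$, i.e. $M_g M_g^*\ge \delta\,I_X$. Testing this operator inequality against finite linear combinations of the reproducing kernels $k_{w_i}$, and using the identity $M_{g_j}^* k_w=\overline{g_j(w)}k_w$, this is equivalent to positive semi-definiteness of the scalar kernel
$$K_0(z,w)=\Bigl(\sum_{j=1}^N g_j(z)\overline{g_j(w)}-\delta\Bigr)k(z,w).$$
Dually, the multiplier corona condition $\|\mathbb{M}_\varphi\|\le 1$ means $I_X-\sum_j M_{\varphi_j}^*M_{\varphi_j}\ge 0$, and here the \emph{complete} aspect of the CNPP is decisive: for a complete irreducible Nevanlinna--Pick kernel $k$, this operator inequality is in turn equivalent to positivity of the $N\times N$ matrix-valued kernel $(I_N-\varphi(w)^*\varphi(z))k(z,w)$.

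The problem thus reduces to constructing a holomorphic $\varphi\colon\Omega\to\mathbb{C}^N$ that satisfies the pointwise linear constraint $\sum g_j(z)\varphi_j(z)=\sqrt{\delta}$ and makes $(I_N-\varphi(w)^*\varphi(z))k(z,w)$ positive, starting from the scalar positivity of $K_0$. I would follow the realization-theoretic strategy of Ball--Trent--Vinnikov: positivity of $K_0$ yields, via Kolmogorov factorization, an auxiliary Hilbert space $\mathcal{K}$ and a holomorphic $F\colon\Omega\to\mathcal{K}$ with $K_0(z,w)/k(z,w)=\langle F(w),F(z)\rangle_\mathcal{K}+\delta(\ldots)$-type identity encoding the coefficients $g_j$ and the defect $\sqrt{\delta}$. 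One then sets up a naturally defined partial isometry between two subspaces of $\mathbb{C}^N\oplus\mathcal{K}$ and $\mathbb{C}\oplus\mathcal{K}$, whose existence on the prescribed vectors is exactly equivalent to the positivity of $K_0$; a ``lurking isometry'' extension to a unitary $V$ on the full spaces, followed by a transfer-function read-off from the block decomposition of $V$, produces $\varphi$, with the constraint $\sum g_j\varphi_j=\sqrt{\delta}$ built into the compatibility of the initial data.

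The main obstacle is precisely the lurking-isometry step: producing the extension and verifying that the resulting transfer function lies in $M_X^N$ with the correct norm. This is where the CNPP is used in its full matricial strength (not merely the scalar Pick condition), since we need the kernel $(I_N-\varphi(w)^*\varphi(z))k(z,w)$ to be \emph{matrix}-positive, and this is equivalent to the multiplier norm bound only under the complete Nevanlinna--Pick hypothesis. Once $V$ is constructed, reading off $\varphi$ from its blocks and checking both (\ref{multcorcon}) and the norm bound is essentially formal.
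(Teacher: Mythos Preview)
The paper does not give its own proof of this theorem: it is quoted as a result of Ball--Trent--Vinnikov \cite{BTV} (with alternate references to Ambrozie--Timotin \cite{AT} and Agler--McCarthy \cite{AgMc2}), and then used as a black box in the discussion of Proposition~\ref{CoronaNASC}. Your outline is precisely the Ball--Trent--Vinnikov realization/lurking-isometry argument, so in that sense you are reproducing the cited proof rather than anything the survey itself supplies.

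One small correction worth flagging. For $\mathbb{M}_\varphi:X\to\oplus^N X$ with $\varphi(z)=(\varphi_1(z),\ldots,\varphi_N(z))^T$ a column, $\varphi(w)^*\varphi(z)$ is a \emph{scalar}, and the contractivity condition $I_X-\sum_j M_{\varphi_j}^*M_{\varphi_j}\ge 0$ is equivalent to positivity of the \emph{scalar} kernel $\bigl(1-\sum_j\varphi_j(z)\overline{\varphi_j(w)}\bigr)k(z,w)$, not an $N\times N$ matrix kernel; moreover that equivalence holds in any RKHS, CNPP or not. The CNPP hypothesis is not used to translate the multiplier norm into a kernel condition; it is used in the \emph{construction} step, to guarantee that the lurking-isometry / commutant-lifting machinery actually produces a contractive column multiplier $\varphi$ satisfying the side constraint $\sum_j g_j\varphi_j=\sqrt{\delta}$ from the scalar positivity of $K_0$. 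Your sketch has the right architecture, but the sentence locating the role of completeness should be moved from the kernel reformulation to the extension step.
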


The \emph{baby corona theorem} is said to hold for $X$ if whenever $%
g_{1},\cdots,g_{N}\in M_{X}$ satisfy 
\begin{equation}
\left\vert g_{1}\left( z\right) \right\vert ^{2}+\cdots+\left\vert g_{N}\left(
z\right) \right\vert ^{2}\geq c>0,\ \ \ \ \ z\in \Omega ,  \label{lowerphi}
\end{equation}%
then $g_{1},\ldots,g_{N}$ satisfy the baby corona property \eqref{corcon}.

We now state a simple proposition that will be useful in understanding the relationships 
between the Corona problems for $\mathcal{D}$ and $M_{\mathcal{D}}$.

\begin{proposition}
\label{CoronaNASC}
Suppose that $g_1,\ldots, g_N\in M(\mathcal{D})$.  Define the map
$$
M_{(g_1,\ldots, g_n)}(f_1,\ldots, f_n):=\sum_{k=1}^N g_k(z)f_k(z).
$$
Then the following are equivalent
\begin{itemize}
\item[(i)] $M_{(g_1,\ldots, g_n)}:
M(\mathcal{D})\times\cdots\times M(\mathcal{D})\mapsto M(\mathcal{D})$ 
is onto;\label{CoronaMulti}
\item[(ii)] $M_{(g_1,\ldots, g_n)}:\mathcal{D}\times\cdots\times \mathcal{D}
\mapsto \mathcal{D}$ is onto;\label{CoronaHilbert}
\item[(iii)] There exists a $\delta>0$ such that for all $z\in\mathbb{D}$ we have
$$
\sum_{k=1}^N\left|g_k(z)\right|^2\geq\delta>0.
$$
\end{itemize}
\end{proposition}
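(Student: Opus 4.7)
The three conditions sit at three different levels — pointwise, Hilbert-space-surjectivity, and multiplier-algebra-surjectivity — so the natural scheme is to prove the easy implications to (iii) by open-mapping-plus-evaluation arguments, and then recover (ii) and (i) from (iii) using the $\overline{\partial}$-technology of Section \ref{sec:dbar} together with the complete Nevanlinna--Pick structure of $\mathcal{D}$ (Theorem \ref{CNPP}). I would set up a triangle: (i)$\Rightarrow$(iii), (ii)$\Rightarrow$(iii), (iii)$\Rightarrow$(ii), and finally (ii)$\Rightarrow$(i) via the Toeplitz Corona Theorem.

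For (i)$\Rightarrow$(iii), take the constant function $1\in M(\mathcal{D})$ as target: by hypothesis there exist $f_1,\dots,f_N\in M(\mathcal{D})$ with $\sum g_k f_k\equiv1$. Since $M(\mathcal{D})\subset H^\infty$ contractively, Cauchy--Schwarz at a point gives $1\le(\sum|g_k(z)|^2)^{1/2}(\sum\|f_k\|_\infty^2)^{1/2}$, which is (iii). For (ii)$\Rightarrow$(iii), the open mapping theorem provides $C>0$ such that every $h\in\mathcal{D}$ admits $f_1,\dots,f_N\in\mathcal{D}$ with $\sum g_kf_k=h$ and $\sum\|f_k\|_\mathcal{D}^2\le C\|h\|_\mathcal{D}^2$. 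Apply this to the reproducing kernel $h=K_z$ and evaluate the resulting identity at $z$: using $|f_k(z)|\le\|f_k\|_\mathcal{D}\|K_z\|_\mathcal{D}$ and $K_z(z)=\|K_z\|_\mathcal{D}^2$, Cauchy--Schwarz gives
\[
\|K_z\|_\mathcal{D}^2\le\Bigl(\sum|g_k(z)|^2\Bigr)^{1/2}\sqrt{C}\,\|K_z\|_\mathcal{D}^2,
\]
so $\sum|g_k(z)|^2\ge C^{-1}$.

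The real work is (iii)$\Rightarrow$(ii), the baby corona theorem for $\mathcal{D}$. I would start from the smooth $\overline{\partial}$-closed partition $\psi_k=\overline{g_k}/\sum|g_j|^2$, for which $\sum g_k\psi_k\equiv1$, and seek the solution in the Koszul-complex form $f_k=\psi_k h-\sum_j g_j b_{jk}$ with anti-symmetric unknowns $b_{jk}=-b_{kj}$. Requiring each $f_k$ to be holomorphic reduces to the $\overline{\partial}$-system $\overline{\partial}b_{jk}=h(\psi_k\overline{\partial}\psi_j-\psi_j\overline{\partial}\psi_k)$. Using $g_k\in M(\mathcal{D})\subset H^\infty\cap\mathcal{X}$, the lower bound (iii), and Theorem \ref{duality}(3), one checks that the data on the right-hand side is an $\mathcal{D}$-Carleson measure with norm controlled by $\|h\|_\mathcal{D}$. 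At this point Xiao's Theorem \ref{dbarwithestimates} produces solutions $b_{jk}$ belonging to the multiplier algebra of $\mathcal{H}^{1/2}(\mathbb{S})$ with the correct estimate, and then the identity $f_k=\psi_k h-\sum_j g_j b_{jk}$ gives $\sum g_k f_k=h$ with $\sum\|f_k\|_\mathcal{D}^2\lesssim\|h\|_\mathcal{D}^2$. This is the technical heart of the proof, and the main obstacle: the $\overline{\partial}$-corrections must be solved in a function space whose \emph{multiplier} norm is bounded by a \emph{Dirichlet--Carleson} norm of the data, which is exactly the content of the Jones--Xiao operator.

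Finally, (ii)$\Rightarrow$(i) is a short application of the Toeplitz Corona Theorem (Theorem \ref{ToeCorThm}). By Theorem \ref{CNPP} the space $\mathcal{D}$ is a complete Nevanlinna--Pick space, so the baby corona property supplied by (ii) yields $\varphi_1,\dots,\varphi_N\in M(\mathcal{D})$ with $\|\varphi\|_{\mathrm{Mult}(\mathcal{D},\oplus^N\mathcal{D})}\le1$ and $\sum g_k\varphi_k=\sqrt{\delta}$. Given any target $h\in M(\mathcal{D})$, the functions $f_k:=\delta^{-1/2}\varphi_k h$ lie in $M(\mathcal{D})$ (the multiplier algebra is a Banach algebra) and satisfy $\sum g_kf_k=h$, which is (i). This closes the cycle (i)$\Rightarrow$(iii), (iii)$\Rightarrow$(ii), (ii)$\Rightarrow$(i), together with the direct (ii)$\Rightarrow$(iii).
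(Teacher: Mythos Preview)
Your proposal follows the same route the paper sketches: the easy implications to (iii), then (iii)$\Rightarrow$(ii) via the Wolff $\overline{\partial}$/Koszul scheme with Xiao's solution operator (Theorem~\ref{dbarwithestimates}), and finally (ii)$\Rightarrow$(i) via the Toeplitz Corona Theorem and the CNPP of $\mathcal{D}$ (Theorem~\ref{CNPP}). Your arguments for (i)$\Rightarrow$(iii), (ii)$\Rightarrow$(iii), and (ii)$\Rightarrow$(i) are correct and in fact more detailed than the paper's one-line dismissals.

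There is one imprecision worth flagging in your sketch of (iii)$\Rightarrow$(ii). You assert that the $\overline{\partial}$-data $G_{jk}=h(\psi_k\overline{\partial}\psi_j-\psi_j\overline{\partial}\psi_k)$ yields a $\mathcal{D}$-Carleson measure with norm controlled by $\|h\|_{\mathcal{D}}$, so that Theorem~\ref{dbarwithestimates} applies directly. But for a general (possibly unbounded) $h\in\mathcal{D}$ the measure $|h|^{2}|A_{jk}|^{2}\,dA$, where $A_{jk}=\psi_k\overline{\partial}\psi_j-\psi_j\overline{\partial}\psi_k$, need not be Carleson at all; what one actually knows is that $|A_{jk}|^{2}\,dA\in CM(\mathcal{D})$ (from $g_k\in\mathcal{X}$ and the lower bound), and hence only that $\int_{\mathbb{D}}|G_{jk}|^{2}\,dA\lesssim\|h\|_{\mathcal{D}}^{2}$, which is not the hypothesis of Xiao's theorem. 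The clean fix is to apply Xiao to the $h$-free data $A_{jk}$, obtaining $c_{jk}\in M(\mathcal{H}^{1/2}(\mathbb{S}))$ with $\overline{\partial}c_{jk}=A_{jk}$; since $h$ is holomorphic, $b_{jk}:=h\,c_{jk}$ then solves $\overline{\partial}b_{jk}=hA_{jk}$, and one gets $f_k=h\varphi_k$ with $\varphi_k:=\psi_k-\sum_j g_j c_{jk}$ holomorphic. Verifying that $\varphi_k\in M(\mathcal{D})$ simultaneously yields (i) and the bound $\|f_k\|_{\mathcal{D}}\lesssim\|h\|_{\mathcal{D}}$ needed for (ii). The paper's own one-sentence sketch is no more explicit on this point, so this is a refinement of the execution rather than a change of strategy.
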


It is easy to see that both (i) and (ii) each individually imply (iii).  
We will show that condition (iii) implies both (i) and (ii).  Note that by the Toeplitz 
Corona Theorem \ref{ToeCorThm} it would suffice to prove that (iii) implies (ii) since the 
result then lifts to give the statement in (i).  The proof of Proposition \ref{CoronaNASC} 
follows by the lines of Wolff's proof of the Corona Theorem for $H^\infty$, but 
uses the solution operator given by Xiao in Theorem \ref{dbarwithestimates}.

It is important to point out that there are several other proofs of Proposition \ref{CoronaNASC} at 
this point.  The first proof of this fact was given by Tolokonnikov, \cite{Tolo} and was 
essentially obtained via connections with Carleson's Corona Theorem.  Another proof of this 
result, but with the added benefit of being true for an infinite number of generators was 
given by Trent \cite{Trent}.  Trent demonstrated that (iii) implies (ii), and then applied 
the Toeplitz Corona Theorem to deduce that (iii) implies (i).  This proof exploits the fact 
that the kernel for the Dirichlet space is a complete Nevanlinna-Pick kernel.  Finally, there 
is a more recent proof of this fact by Costea, Sawyer and Wick, \cite{CSW}.  The method of 
proof again is to demonstrate the Corona Theorem for $\mathcal{D}$ under the hypothesis (iii).  
The proof in \cite{CSW} is true more generally for the Dirichlet space in any dimension.

\section{Interpolating sequences} Let $\HHH$ be a reproducing kernel Hilbert space (RKHS) 
of functions defined on some space $X$, with kernel functions $\{k_z\}_{z\in X}$. 
Let $\MMM(\HHH)$ be the multiplier space of $\HHH$. A sequence $S\subseteq X$
is an \it interpolating sequence for the multiplier algebra \rm $\MMM(\HHH)$ if the
restriction map
$$
R_S:g\mapsto\{g(s):\ s\in S\}
$$
maps $\MMM(\HHH)$ \it onto \rm $\ell^\infty$. Since $\MMM(\HHH)\subseteq L^\infty(X)$,
the map is automatically bounded. Consider the weight $w:S\to\RR^+$, 
$w(s)=\|k_s\|_\HHH^{-2}$.
We say that the sequence $S$ is \it an interpolating
sequence for the space \rm $\HHH$ if $R_S$ is a bounded map of $\HHH$ \it onto \rm
$\ell^2(S,w)$. In the context of complete Nevanlinna-Pick RKHS these two notions coincide
\cite{MaSu}. Our terminology differs from some sources. In \cite{Bi} Bishop, for instance, calls
\it universally interpolating sequences \rm for $\DDD$ what we call interpolating sequences, and
simply calls \it interpolating sequences what we will call \it onto interpolating sequences\rm.
\begin{theorem}[Marshall and Sundberg, \cite{MaSu}]
\label{toutmeme} 
Let $\HHH$ be a RKHS of functions on some space $X$, with the complete Nevanlinna-Pick property. For a sequence $S$ the following are equivalent:
 \begin{enumerate}
\item $S$ is interpolating for $\MMM(\HHH)$;
\item $S$ is interpolating for $\HHH$;
\item The family of functions $\left\{\frac{k_s}{\|k_s\|_\HHH}\right\}_{s\in S}$
is a Riesz basis for the space $\HHH$:
$$
\left\|\sum_{s\in S}a_s \frac{k_s}{\|k_s\|_\HHH}\right\|_\HHH^2
\approx\sum_{s\in S}|a_s|^2.
$$
 \end{enumerate}
\end{theorem}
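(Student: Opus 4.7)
The plan is to split the proof into three parts. The equivalence (2)$\Leftrightarrow$(3) I would obtain by Hilbert-space duality: define $R_S : \HHH \to \ell^2(S,w)$ by $R_S f = (f(s))_{s \in S}$, and note that a direct reproducing-kernel computation shows the adjoint $R_S^*$ sends the natural orthonormal basis vector $w(s)^{-1/2}\delta_s$ of $\ell^2(S,w)$ to the normalized kernel $\hat k_s := k_s/\|k_s\|_\HHH$. The closed range / open mapping theorem then says $R_S$ is bounded and surjective iff $R_S^*$ is bounded above and bounded below, which is exactly the two-sided Riesz-basis inequality of (3). This step uses no CNPP.

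For (1)$\Rightarrow$(3) I would use peak multipliers. The open mapping theorem applied to the surjection $\MMM(\HHH)\to\ell^\infty(S)$ produces, for any choice of signs $\varepsilon \in \{\pm 1\}^S$, a multiplier $m_\varepsilon$ with $m_\varepsilon(s) = \varepsilon_s$ and $\|m_\varepsilon\|_{\MMM(\HHH)} \le C$ for a uniform constant $C$. By the eigenvector identity for adjoints of multiplication operators (already recalled in the excerpt), $M_{m_\varepsilon}^*\hat k_s = \varepsilon_s \hat k_s$, so we get a uniformly bounded family of sign-change operators on $\overline{\mathrm{span}}\{\hat k_s\}$. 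This is the definition of an unconditional basic sequence, and in Hilbert space unconditional basic sequences are precisely Riesz basic sequences, giving (3).

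For (3)$\Rightarrow$(1), which is the deep direction using CNPP, I would verify the Pick matrix condition. Given $(a_s) \in \ell^\infty(S)$ with $\|a\|_\infty \le \rho$ and a finite subset $S_N \subseteq S$, form $P_N = [(1-a_i\bar a_j)k(s_i,s_j)]_{i,j \in S_N}$. Expanding and collecting terms,
\[
\langle P_N c, c\rangle = \Bigl\|\sum_i c_i k_{s_i}\Bigr\|_\HHH^2 - \Bigl\|\sum_i c_i\bar a_i k_{s_i}\Bigr\|_\HHH^2,
\]
and the Riesz-basis bounds of (3) (with upper constant $B$ and lower constant $A$) give $\langle P_N c, c\rangle \ge (A - B\rho^2)\sum_i|c_i|^2\|k_{s_i}\|_\HHH^2 \ge 0$ whenever $\rho \le \sqrt{A/B}$. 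Under this positivity, the complete Nevanlinna-Pick property produces for each $S_N$ a multiplier $m_N$ with $m_N(s_i)=a_i$ for $i\in S_N$ and $\|m_N\|_{\MMM(\HHH)}\le 1$. A weak-$*$ compactness argument (equivalently, a normal families argument applied to the uniformly bounded analytic functions $m_N$) extracts a limit multiplier $m$ interpolating the full sequence $S$, and a final rescaling by $1/\rho$ removes the smallness assumption to give (1) for arbitrary $\|a\|_\infty \le 1$.

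The main obstacle is (3)$\Rightarrow$(1). The Riesz bounds only give Pick positivity after the rescaling $\rho \le \sqrt{A/B}$, which forces the extra rescaling at the end; and passing from finite interpolants $m_N$ to a single multiplier on all of $S$ requires the weak-$*$ / normal families limit together with a verification that the limit still interpolates. Both of these steps rely critically on the CNPP: it is the complete Nevanlinna-Pick structure of the kernel, not merely that $\HHH$ is a reproducing kernel Hilbert space, that converts the Pick-matrix positivity into a scalar multiplier of controlled norm and makes the whole circle close up.
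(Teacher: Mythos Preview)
The paper does not prove this theorem; it is stated and attributed to Marshall and Sundberg \cite{MaSu} (with pointers to \cite{AgMc1}, \cite{AgMc2} for the CNPP background), so there is no in-paper argument to compare against. Your outline is correct and is essentially the standard proof.

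Two small remarks. In (1)$\Rightarrow$(3), the passage ``unconditional basic sequences in Hilbert space are Riesz basic sequences'' is justified by averaging: from $\|M_{m_\varepsilon}^*\|\le C$ you get $C^{-2}\bigl\|\sum c_s\hat k_s\bigr\|^2 \le \bigl\|\sum \varepsilon_s c_s\hat k_s\bigr\|^2\le C^2\bigl\|\sum c_s\hat k_s\bigr\|^2$ for every sign pattern, and taking $\mathbb{E}_\varepsilon$ on the middle term gives exactly $\sum|c_s|^2$, which produces both Riesz inequalities at once (and forces linear independence of the $\hat k_s$, so there is no hidden assumption there).

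In (3)$\Rightarrow$(1), your Pick-matrix computation and rescaling are correct; note that you are only using the \emph{scalar} Nevanlinna--Pick property, which CNPP implies. For the limiting step, ``normal families'' is only literally available in the holomorphic setting of this survey; in the abstract CNPP setting one instead uses that the closed unit ball of $\MMM(\HHH)\subset B(\HHH)$ is compact in the weak operator topology, and that point evaluations $m\mapsto m(s)$ are WOT-continuous via $\overline{m(s)}\,\|k_s\|^2=\langle k_s, M_m k_s\rangle$. With that adjustment the compactness step goes through for a general RKHS with the CNPP, and your circle of implications closes.
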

Sarason observed that interpolating sequences for the multiplier space $\MMM(\HHH)$ have a 
distinguished r\^ole in the theory of the RKHS space $\HHH$. Let $\varphi$ be a multiplier
of the space $\HHH$ and $S=\{s_j:\ j=1,\dots,n\}$ be a sequence in $X$. Let 
$M_\varphi$ the multiplication operator by $\varphi$ and $M_\varphi^*$ be its adjoint. Then, as we have already seen, 
$\{\overline{\varphi(s_j)}:\ j=1,\dots,n\}$ is a set of eigenvalues for $M_\varphi^*$, 
having the corresponding kernel functions as eigenvectors: 
$M_\varphi^*k_{s_j}=\overline{\varphi(s_j)}k_{s_j}$. 

Finding the multiplier $\varphi$ which interpolates data $\varphi(s_j)=\lambda_j$ corresponds,
then, to extending the diagonal operator $k_{s_j}\mapsto\overline{\lambda_j}k_{s_j}$ 
(which is defined on $\mbox{span}\{k_{s_j}: j=1,\dots,n\}$) to the adjoint of a 
multiplication operator.
We redirect the interested reader to the book \cite{AgMc2}, to the article 
\cite{AgMc1} and to the important manuscript \cite{MaSu} itself for far reaching 
developments of this line of reasoning.

For a given sequence $S$ in $X$, there are two obvious \it necessary conditions \rm for it to be
interpolating for $\HHH$:
\begin{itemize}
\item[\hypertarget{Sep}{(Sep)}] The sequence $S$ is separated: There is a positive $\sigma<1$ such that
for all $s,t\in S$ one has
$$
\left|\left\langle\frac{k_s}{\|k_s\|_\HHH},\frac{k_t}{\|k_t\|_\HHH}\right\rangle\right|\le\sigma.
$$
This condition expresses the fact that there exists a function $f\in\HHH$ such that $f(s)=0$ and $f(t)=1$.
\item[\hypertarget{CM}{(CM)}] The measure $\mu_S=\sum_{s\in S}\|k_s\|_\HHH^{-2}\delta_s$ is a 
Carleson measure for the space $\HHH$:
$$
\int_X|f|^2d\mu_S\le C(\mu)\|f\|_\HHH^2,
$$
which expresses the boundedness of the restriction map $R_S$.
\end{itemize}
Kristian Seip \cite{Seip} conjectures that, for a RKHS with the complete Nevanlinna-Pick property, these two conditions are sufficient for $S$ to be interpolating. Carleson's celebrated Interpolation Theorem \cite{Car2} says that such is the case when $\HHH=H^2$ is the Hardy space. B\"oe proved Seip's conjecture under an additional assumption on the kernel functions (an assumption which, interestingly, is \it not \rm satisfied by the Hardy space itself, but which is satisfied by the Dirichlet space).
  
\subsection{\texorpdfstring{Interpolating Sequences for $\DDD$ and its Multiplier Space}{Interpolating Sequences for the Dirichlet Space and it Multiplier Space}}

The characterization of the interpolating functions was independently solved by Marshall and 
Sundberg \cite{MaSu} and by Bishop \cite{Bi} in 1994.
\begin{theorem}[Bishop \cite{Bi}, Marshall and Sundberg \cite{MaSu}]\label{MaSuBi}
A sequence $S$ in $\DD$ is interpolating for the Dirichlet space $\DDD$ if and only if it 
satisfies \textnormal{\hyperlink{Sep}{(Sep)}} and \textnormal{\hyperlink{CM}{(CM)}}.
\end{theorem}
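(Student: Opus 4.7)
The plan splits into necessity (short and standard) and sufficiency (the main content), the latter leveraging the complete Nevanlinna-Pick property of $\DDD$ (Theorem \ref{CNPP}) together with the three-way equivalence of Theorem \ref{toutmeme}.

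For necessity, both conditions follow directly from the surjectivity of $R_S$. The open mapping theorem makes $R_S$ bounded, and boundedness of $R_S:\DDD\to \ell^2(S,w)$ is, by definition, (CM). For (Sep), given any $s_0\in S$ surjectivity yields $f_{s_0}\in \DDD$ with $f_{s_0}(s)=\delta_{s,s_0}$ for $s\in S$ and $\|f_{s_0}\|_\DDD \lesssim \|k_{s_0}\|_\DDD^{-1}$. Writing $f_{s_0}(t)=\langle f_{s_0}, k_t\rangle_\DDD$, pairing with the normalized kernels $\widehat{k}_s := k_s/\|k_s\|_\DDD$ and applying Cauchy--Schwarz forces $|\langle \widehat{k}_s,\widehat{k}_t\rangle|$ to stay bounded away from $1$ uniformly in $s\ne t$.

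For sufficiency, Theorem \ref{toutmeme} reduces the problem, via the CNPP, to showing that $\{\widehat{k}_s\}_{s\in S}$ is a Riesz sequence, i.e.
\[
\sum_{s\in S}|a_s|^2 \;\approx\; \Big\|\sum_{s\in S} a_s \widehat{k}_s\Big\|_\DDD^{2}
\qquad \forall\,\{a_s\}\in \ell^2(S).
\]
The upper estimate is essentially (CM): expanding the squared norm produces the quadratic form of the Gram matrix $G=[\langle \widehat{k}_s,\widehat{k}_t\rangle]$, and $G$ is bounded on $\ell^2(S)$ exactly when $\mu_S\in CM(\DDD)$, by a duality argument identical in spirit to the one used in the proof discussion of Theorem \ref{newtestingcondition} (the adjoint of the Carleson embedding $\DDD\hookrightarrow L^2(\mu_S)$ sends $\ell^2(S,w)$ into $\DDD$ via $\{b_s\}\mapsto \sum_s b_s k_s$). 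The lower estimate is the crux and is where both hypotheses interact with the CNPP. The strategy is to construct an explicit dual family $\{F_s\}_{s\in S}\subset \DDD$ with $F_s(s)=1$, $F_s(t)=0$ for $t\in S\setminus\{s\}$, $\|F_s\|_\DDD \lesssim \|k_s\|_\DDD^{-1}$, and with $\{\lambda_s\}\mapsto \sum_s \lambda_s \|k_s\|_\DDD\, F_s$ bounded from $\ell^2(S)$ to $\DDD$; such a dual system inverts the Gram matrix and solves the interpolation problem.

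The main obstacle is the construction of $\{F_s\}$ with all three properties simultaneously. The CNPP supplies the essential algebraic tool: given (Sep) one can build, for each $s\in S$, a multiplier that peaks at $s$ and stays small at every other node of $S$ (a B\"oe-style construction, cf.\ \cite{Bo2}), and these peak multipliers can be assembled into the required $F_s$; the (CM) hypothesis is what gives the $\ell^2$-stability of the assembled summation map. A naive construction based on kernels or their truncations produces the right value at $s$ but loses vanishing at the other nodes, while correcting term-by-term destroys global control. The CNPP circumvents this via the Pick-matrix characterization, which under (Sep)+(CM) is uniformly positive definite across all finite truncations of $S$, letting one pass to the limit and solve the full problem on $S$. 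This is exactly why (Sep)+(CM) suffice for $\DDD$ but fail for non-NP spaces such as the Bergman space.
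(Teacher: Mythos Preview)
Your necessity argument is essentially fine, though note that in the paper's definition of ``interpolating,'' boundedness of $R_S$ is \emph{assumed}, so (CM) is immediate and the open mapping theorem plays no role there.

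The sufficiency argument has a genuine gap. The reduction via Theorem~\ref{toutmeme} to the Riesz basis property is correct, and the upper Riesz bound does follow from (CM) as you say. The problem is the lower bound. You assert that ``the CNPP supplies the essential algebraic tool'' for building peak multipliers and that ``under (Sep)+(CM) [the Pick matrix] is uniformly positive definite across all finite truncations.'' But this last claim \emph{is} the lower Riesz bound you are trying to establish, so the argument is circular; and the CNPP by itself does not manufacture peak functions from (Sep). Indeed, the paper explicitly records, just before the theorem, that the implication ``(Sep)+(CM) $\Rightarrow$ interpolating'' in an arbitrary CNPP space is only a \emph{conjecture} of Seip, proved by B\"oe only under an extra hypothesis on the kernel. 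What actually does the work in all four proofs the paper surveys is a Dirichlet-specific construction of building blocks: Marshall--Sundberg's smoothed indicators $\varphi_z$ of enlarged Carleson boxes $\tilde S(z)$, subsequently corrected to harmonic and then holomorphic functions; Bishop's conformal-map blocks; or B\"oe's holomorphic modifications of the $\varphi_z$ assembled by a hard recursion scheme (\cite{Bo1}, not \cite{Bo2}). The enormous hyperbolic separation forced by the Dirichlet form of (Sep) is what makes the supports of these blocks nearly disjoint and the assembly controllable, with (CM) entering the estimates. None of this comes out of the Pick-matrix characterization. Your outline would become a proof once one of these constructions is actually carried out; the sentence ``the CNPP circumvents this'' is precisely the point at which the argument stops being one.
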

Actually, Bishop proved that interpolating sequences for $\DDD$ are also interpolating for
$\MMM(\DDD)$, but not the converse. At the present moment, there are four essentially 
different proofs that \hyperlink{Sep}{(Sep)} and \hyperlink{CM}{(CM)} are necessary and sufficient for $\DDD$ interpolation: 
\cite{Bi}, \cite{Bo2}, \cite{Bo1} and \cite{MaSu}.
Interpolating sequences for the Dirichlet space differ in one important aspect from
interpolating sequences for the Hardy space. In the case of $H^2$, in fact, if the restriction
operator is surjective (if, in our terminology, the sequence $S$ is \rm onto interpolating\rm),
then it is automatically bounded. As we will see in the next subsection, there are sequences
$S$ in the unit disc for which the restriction operator is surjective, but not bounded.

It is interesting and useful to restate the separation condition in terms of hyperbolic
distance: \hyperlink{Sep}{(Sep)} \it in the Dirichlet space $\DDD$ holds for the sequence $S$ in $\DD$
if and only if there are positive constants $A,B$ such that, for all $z\ne w\in S$
$$
\max\{d(z),d(w)\}\le Ad(z,w)+B.
$$
\rm This huge separation, which is related to the hyperbolic invariance of the Dirichlet norm,
compensates - in the solution of the interpolating sequences  problem and in other questions -
for the lack of Blaschke products. In fact, it allows much space for crafting
holomorphic functions from smooth ones with little overlap. 

\proof[Proof(s) Discussion] \cite{MaSu}. In their article, Marshall and Sundberg first developed a
general theory concerning interpolating sequences in spaces with the complete
Nevanlinna-Pick property. In particular, they reduced the problem of characterizing
the interpolating sequences for $\DDD$ to that of the interpolating sequences for its multiplier
space. This left them with the (hard) task of showing that, given \hyperlink{Sep}{(Sep)} and \hyperlink{CM}{(CM)}, one could 
interpolate bounded sequences by multiplier functions. In order to do that, they first 
solved the easier (but still difficult) problem of interpolating the data by means of a smooth
function $\varphi:\DD\to\RR$, having properties similar to those of a multiplier in
$\MMM(\DDD)$.  In particular, $\varphi$ is bounded, it has finite Dirichlet norm and, more, 
$|\nabla\varphi|^2dA\in CM(\DDD)$. 

The basic building block for constructing such $\varphi$
are functions $\varphi_z$ attached to points $z$ in $\DD$, which are, substantially,
the best smoothed version (in terms of Dirichlet integral) of the function 
$\chi_{\tilde{S}(z)}$, where 
$$
\tilde{S}(z)=\left\{w\in\DD:\ \left|w-\frac{z}{|z|}\right|\le(1-|z|)^\alpha\right\}
$$ 
($\alpha<1$ suitably chosen) is the ``enlarged Carleson box'' having center in $z$. 
The separation condition \hyperlink{Sep}{(Sep)} ensures that, if $z_1,z_2$ are points of the sequence $S$
and $\mbox{supp}(\varphi_{z_1})\cap \mbox{supp}(\varphi_{z_2})\ne0$, then one of the points
has to be much closer to the boundary than the other. This is one of the two main tools (the other being the Carleson measure condition, which further separates the points of the sequence) in the various estimates for linear combinations of basic building functions. These basic building blocks and their holomorphic modifications are the main tool in the proofs of the interpolating theorems in \cite{ARS5} and \cite{Bo1}.  The rest of the proof consists in showing that one can correct the function $\varphi$, making it harmonic, and from this, one easily proceeds to the holomorphic case.

\smallskip

Bishop, instead, uses as building blocks conformal maps, (see \cite{Bi}, p.27). In his article, he observes that the construction of the interpolating functions for $\DDD$ does not require the full use of the assumption \hyperlink{CM}{(CM)}.  This is contrary to the Hardy case, where there are sequences $S$ for which the restriction operator is \it  onto and unbounded. \rm We will return on this in the next section. 

B\"oe's short proof in \cite{Bo2} is less constructive, and it relies on Hilbert space 
arguments. However, in his paper \cite{Bo1}, dealing with the more general case of the
analytic Besov spaces, B\"oe has an explicit construction of the interpolating sequences
for $\DDD$ and $\MMM(\DDD)$. 
He makes use of holomorphic modifications of the functions $\varphi_z$ in \cite{MaSu}, 
which are the starting point for a hard and clever recursion scheme.

It is clear from the construction in \cite{Bo2} that, under the assumptions \hyperlink{Sep}{(Sep)} and \hyperlink{CM}{(CM)}, one
has \it linear interpolation of data\rm, both in $\DDD$ and $\MMM(\DDD)$: there exist
linear operators $L_S:\ell^\infty(S)\to\MMM(\DDD)$ and 
$\Lambda_S:\ell^2(S,w)\to\DDD$ such that $L_S\{a_s:\ s\in S\}$ solves the interpolating problem
in $\MMM(\DDD)$ with data $\{a_s:\ s\in S\}\in\ell^\infty(S)$ and 
 $\Lambda_S\{b_s:\ s\in S\}$ solves the interpolating problem
in $\DDD$ with data $\{b_s:\ s\in S\}\in \ell^2(S,w)$. 
\endproof
\subsection{Weak Interpolation and ``Onto'' Interpolation} A sequence $S$ in $\DD$ is 
\it onto interpolating \rm if the restriction operator $R_S$ maps $\DDD$ onto $\ell^2(S,w)$.
We do not ask the operator $R_S$ to be bounded (hence, to be defined on all of $\DDD$). 
It follows from the Closed Graph Theorem that, if $S$ is onto interpolating, then it is 
interpolating \it with norm control.  Namely, \rm there is a constant $C>0$ such that for 
$\{a_s:\ s\in S\}\in\ell^2(S,w)$ there is $f\in\DDD$ such that $f(s)=a_s$ and 
$\|f\|_\DDD\le C\|\{a_s:\ s\in S\}\|_{\ell^2(S,w)}$. Furthermore, interpolation can be realized
linearly.

A sequence $S$ in $\DD$ is \it weakly interpolating \rm if there is $C>0$ such that, for all 
$s_0\in S$ there is $f_{s_0}\in\DDD$ with  $f_{s_0}(s)=\delta_{s_0}(s)$ for $s\in S$
($\delta_{s_0}$ is the Kroenecker function) and norm control
$\|f_{s_0}\|_\DDD^2\le C\|\delta_{s_0}\|_{\ell^2(S,w)}^2\approx d(s_0)^{-1}$. 
Clearly, weakly interpolating is weaker than onto interpolating.
\begin{remark}
\label{addsomepoints}

\begin{itemize}
\item[]
 \item[(a)] Weak interpolation ({\rm a fortiori}, onto interpolation) implies the separation
condition \textnormal{\hyperlink{Sep}{(Sep)}};
\item[(b)] By adding a finite number of points to an onto interpolating sequence, we obtain
another onto interpolating sequence.
\end{itemize}
\end{remark}

A geometric characterization of the onto interpolating sequences is still lacking. However,
the following facts are known.
\begin{theorem}[Bishop \cite{Bi}]\label{weaklybishop}
The sequence $S$ is onto interpolating if and only if it is weakly interpolating, and this is
in turn equivalent to having weak interpolation with functions which satisfy the further
condition that $\|f_s\|_{L^\infty(\DDD)}\le C$ for some constant $C$.
\end{theorem}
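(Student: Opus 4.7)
Label the three conditions as $(1)$ onto interpolating, $(2)$ weakly interpolating, $(3)$ weakly interpolating with a uniform $L^\infty$ bound. The plan is to establish the cycle $(1)\Rightarrow(2)\Rightarrow(3)\Rightarrow(1)$; the implication $(3)\Rightarrow(2)$ is immediate. For $(1)\Rightarrow(2)$, the domain $X=\{f\in\DDD:(f(s))_{s\in S}\in\ell^2(S,w)\}$ is a Banach space under the graph norm $\|f\|_X^2=\|f\|_\DDD^2+\sum_{s\in S}w(s)|f(s)|^2$, and $R_S:X\to\ell^2(S,w)$ is bounded and surjective by hypothesis. The Open Mapping Theorem supplies a bounded linear right inverse $\Lambda$; setting $f_{s_0}:=\Lambda(\delta_{s_0})$ yields weak interpolants with $\|f_{s_0}\|_\DDD\le\|\Lambda\|\sqrt{w(s_0)}\approx\|\Lambda\|\,d(s_0)^{-1/2}$.

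For $(2)\Rightarrow(3)$, let $f_s$ be a weak interpolant. The area identity \hyperlink{Area1}{(Area)} gives $\mathrm{Area}(f_s(\DD))=\|f_s\|_{\DDD,*}^2\lesssim d(s)^{-1}$, so the image of $f_s$, viewed as a (possibly multi-sheeted) Riemann surface over $\CC$, has small area and contains the distinguished points $0$ and $1$. Exploiting this small-area bound together with the hyperbolicity of the image, I construct a bounded holomorphic function $\phi_s:f_s(\DD)\to\CC$ with $\phi_s(0)=0$, $\phi_s(1)=1$, $\|\phi_s\|_\infty\le C_0$, and a uniform bound $|\phi_s'|\le K$ on $f_s(\DD)$; the construction proceeds via the Green's function of $f_s(\DD)$ and a suitable normalization of the ensuing map into the disc. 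Setting $\tilde f_s:=\phi_s\circ f_s$ yields the desired bounded weak interpolant, with $\tilde f_s(t)=\delta_{s,t}$ for $t\in S$ and $\|\tilde f_s\|_{\DDD,*}^2\le K^2\|f_s\|_{\DDD,*}^2\lesssim d(s)^{-1}$.

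For $(3)\Rightarrow(1)$, define $\Lambda:\ell^2(S,w)\to\DDD$ by $\Lambda\{a_s\}:=\sum_s a_s\tilde f_s$. Pointwise convergence of the series at each $z\in\DD$ follows from the kernel bound $|\tilde f_s(z)|\le\|\tilde f_s\|_\DDD\|K_z\|_\DDD\lesssim\sqrt{w(s)}\sqrt{1+d(z,0)}$, Cauchy--Schwarz, and the $\ell^2(S,w)$ summability of $\{a_s\}$, while the interpolation property $\Lambda\{a_s\}(t)=a_t$ is immediate from $\tilde f_s(t)=\delta_{s,t}$. For the norm estimate, expand
\[
\|\Lambda\{a_s\}\|_\DDD^2=\sum_{s,t}a_s\overline{a_t}\langle\tilde f_s,\tilde f_t\rangle_\DDD
\]
and apply a Schur test: the diagonal $\langle\tilde f_s,\tilde f_s\rangle_\DDD\lesssim w(s)$ contributes the main term, while the $L^\infty$ bound on $\tilde f_s$ combined with the separation condition \hyperlink{Sep}{(Sep)} (a consequence of weak interpolation, Remark \ref{addsomepoints}(a)) forces the off-diagonal entries to decay fast enough to yield $\|\Lambda\{a_s\}\|_\DDD^2\lesssim\sum_s|a_s|^2w(s)$; since $R_S\circ\Lambda=\mathrm{Id}$, this demonstrates surjectivity of $R_S$.

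The main obstacle is the step $(2)\Rightarrow(3)$. The area bound constrains only the planar measure of $f_s(\DD)$, not its diameter or topological complexity, so manufacturing the bounded composition $\phi_s$ with the prescribed values at $0$ and $1$ and a uniform derivative bound requires a quantitative conformal-mapping argument going beyond a simple truncation: one must exploit how the small-area condition forces the image to be hyperbolically tame in a quantitative sense, which is the technical heart of Bishop's approach.
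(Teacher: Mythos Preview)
The paper does not prove this theorem; it attributes it to Bishop and says only that the proof ``involves the clever use of a variety of sophisticated tools.'' So the relevant question is whether your outline stands on its own. Your $(1)\Rightarrow(2)$ is essentially the norm-control observation already recorded just before Remark~\ref{addsomepoints} (minor point: the Open Mapping Theorem yields norm-controlled preimages, not a bounded \emph{linear} right inverse, but that is all you need here).

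Both remaining steps have genuine gaps. In $(2)\Rightarrow(3)$, for $\tilde f_s=\phi_s\circ f_s$ to inherit the values $\delta_{s,t}$ on $S$, the map $\phi_s$ must depend only on the complex value $f_s(z)$, hence must be holomorphic on the \emph{planar} image, which need not be simply connected; and small area in no way yields a uniform bound on $|\phi_s'|$, since $f_s(\DD)$ can be unbounded or arbitrarily thin (a Dirichlet function with small semi-norm can have unbounded image). More seriously, $(3)\Rightarrow(1)$ contains an unacknowledged gap: the $L^\infty$ bound together with \hyperlink{Sep}{(Sep)} does \emph{not} force the normalized Gram entries $\langle\tilde f_s,\tilde f_t\rangle_\DDD/\sqrt{w(s)w(t)}$ to be summable along rows---Cauchy--Schwarz gives only that each entry is $O(1)$, and nothing in your hypotheses produces the decay a Schur test needs. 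Even the pointwise convergence of $\sum_s a_s\tilde f_s(z)$ is not established by the estimates you cite. Direct summation of weak interpolants is not how onto interpolation is obtained; a further mechanism (an iterative correction scheme, or a reduction to a known sufficient geometric condition such as the simple condition~\eqref{simplecondition}) is required, and that is precisely the missing idea.
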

The proof of Bishop's Theorem involves the clever use of a variety of sophisticated tools.
It would be interesting having a different proof (one which worked for the analytic Besov
spaces, for instance). Unfortunately, establishing whether a given sequence $S$ is weakly
interpolating is not much easier than establishing if it is onto interpolating.

Both Bishop \cite{Bi} and B\"oe realized that a sequence $S$ is onto interpolating if the 
associated measure $\mu_S$ satisfies the \it simple condition \rm \eqref{simplecondition}
instead of the stronger Carleson measure condition \hyperlink{CM}{(CM)}. The simple condition implies, in
particular, that the measure $\mu_S$ is finite and Bishop asked whether there are 
onto interpolating sequences with infinite $\mu_S$. The answer is affirmative:
\begin{theorem}[Arcozzi, Rochberg, and Sawyer, \cite{ARS5}]\label{ontoarsone}
There exist sequences $S$ in $\DD$ with $\mu_S(\DD)=+\infty$, 
which are onto interpolating for $\DDD$.
\end{theorem}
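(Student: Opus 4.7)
The plan is to use Bishop's equivalence (Theorem~\ref{weaklybishop}) to reduce onto-interpolation to weak interpolation, and then to build an explicit sequence with $\mu_{S}(\DD)=+\infty$ for which the requisite interpolating functions can be constructed by hand. By Theorem~\ref{weaklybishop}, for each $s_{0}\in S$ I must produce $f_{s_{0}}\in\DDD$ with $f_{s_{0}}(s_{0})=1$, $f_{s_{0}}\vert_{S\setminus\{s_{0}\}}\equiv 0$, and $\|f_{s_{0}}\|_{\DDD}^{2}\lesssim \|k_{s_{0}}\|_{\DDD}^{-2}\approx 1/d(s_{0})$.

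First I would construct $S$ as a branching sub-tree of the Bergman tree $\TTT$ with very sparse branch levels $L_{n}=2^{n}$. Start from the root and descend along a single child until level $L_{1}$, at which point bifurcate into both children; each of the two branches then descends along a single child until $L_{2}$, where each bifurcates again, and so on. Take $S$ to be the vertices at some branching level $L_{n}$, $n\ge 1$, replaced in $\DD$ by representatives of the corresponding Whitney boxes. Then at tree level $L_{n}$ there are $2^{n}$ points of $S$, each of weight $\approx 1/L_{n}=2^{-n}$, so generation $n$ contributes total mass $\approx 1$ and
\[
\mu_{S}(\DD)\approx\sum_{n\ge 1}\frac{2^{n}}{L_{n}}=\sum_{n\ge 1}1=+\infty.
\]
Separation \hyperlink{Sep}{(Sep)} is immediate from the tree geometry: two distinct points of $S$ must branch apart at some level $L_{k}$ strictly less than the larger of their levels, so their hyperbolic distance is at least $L_{k+1}-L_{k}\ge\tfrac{1}{2}\max(d(z),d(w))$.

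The core step is weak interpolation. For $s_{0}\in S$ at tree depth $L_{n}$, I would start from the B\"oe--Marshall--Sundberg holomorphic building block $\varphi_{s_{0}}$ of \cite{MaSu,Bo1}, supported essentially on the enlarged Carleson box $\widetilde S(s_{0})$ of angular width $(1-|s_{0}|)^{\alpha}$ with $1/2<\alpha<1$, equal to $1$ at $s_{0}$, with $\|\varphi_{s_{0}}\|_{\DDD}^{2}\lesssim 1/L_{n}$. Because $\alpha>1/2=L_{n-1}/L_{n}$, this choice of $\alpha$ forces $\widetilde S(s_{0})$ to contain no $S$-points outside the forward cone $\SSS(s_{0})$, so the only values to kill are those at the $2^{k}$ descendants of $s_{0}$ lying at generation $n+k$. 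I would cancel these values by iteratively subtracting appropriately scaled B\"oe blocks attached to the descendants, exploiting the \emph{nested} support structure $\widetilde S(s)\subsetneq\widetilde S(s_{0})$ for each descendant $s$ so that the corrections live on progressively smaller sets. Summing yields a function $f_{s_{0}}\in\DDD$ satisfying the three required conditions, and Theorem~\ref{weaklybishop} then upgrades the resulting weak interpolation to onto-interpolation.

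The main obstacle is precisely this cascade of cancellations: a naive estimate yields $2^{k}$ descendant-corrections at generation $n+k$, each of squared norm $\approx 2^{-(n+k)}$, adding up via orthogonality to a divergent total $\sum_{k\ge 0}2^{-n}=+\infty$. The rescue must come from the nested-support geometry, which allows one to re-express the sum as a single near-telescoping series whose Dirichlet norm lives on $\widetilde S(s_{0})$ alone and is controlled by $1/L_{n}$; the calibration $L_{n}=2^{n}$ is what makes the relevant geometric ratios close. Failing to achieve the sharp bound $\|f_{s_{0}}\|_{\DDD}^{2}\lesssim 1/L_{n}\approx\|\delta_{s_{0}}\|_{\ell^{2}(S,w)}^{2}$ would leave the argument short of the hypothesis of Theorem~\ref{weaklybishop} and hence short of onto-interpolation; getting this bound exactly right, in the presence of the divergent total mass, is the real content of the theorem.
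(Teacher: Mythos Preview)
Your route---reduce to weak interpolation via Bishop's Theorem~\ref{weaklybishop}, then build the Kronecker-delta interpolants from B\"oe blocks---is different from the paper's, which invokes a modification of B\"oe's recursive scheme to interpolate \emph{arbitrary} $\ell^2(S,w)$ data directly. The detour through Bishop is logically valid but buys you nothing: the hard step in either approach is exactly the same norm control on a B\"oe-type construction, and once that is in hand one gets onto interpolation without appealing to Bishop. Your choice of sequence (branching at levels $L_n=2^n$) and the verification of $\mu_S(\DD)=\infty$ and of \hyperlink{Sep}{(Sep)} are fine and are in the spirit of the construction in \cite{ARS5}.

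The genuine gap is in the construction of $f_{s_0}$. You correctly observe that subtracting a B\"oe block at \emph{every} descendant, with coefficients of order~$1$, gives $2^k$ terms of squared norm $\approx 2^{-(n+k)}$ at generation $n+k$ and hence a divergent total. But your proposed rescue---``re-express the sum as a single near-telescoping series''---is not an argument, and in fact the correct mechanism is not a telescoping of that sum at all. The point is that one should subtract B\"oe blocks only at the \emph{immediate} children $c_1,c_2$ of $s_0$: since $\varphi_{s_0}\approx 1$ and $\varphi_{c_i}\approx 1$ deep inside $\widetilde S(c_i)$, the combination $\varphi_{s_0}-\varphi_{c_1}-\varphi_{c_2}$ is already approximately zero at \emph{all} descendants, with a first-pass squared norm $\approx 1/L_n + 2/L_{n+1}\approx 1/L_n$. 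What remains is a small residual error at each descendant (because holomorphic B\"oe blocks are not identically $1$ on their boxes), and it is the \emph{iterative correction of these residuals}---precisely B\"oe's recursion---whose convergence must be established. That convergence, in the presence of $\mu_S(\DD)=+\infty$, is exactly the content of the theorem, and your proposal stops short of it; you yourself concede as much in the final sentence.
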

The proof of Theorem \ref{ontoarsone} relies on a modification of B\"oe's recursive scheme,
using B\"oe's functions.   In \cite{ARS5} there is another partial result, which extends the 
theorems of Bishop and B\"oe. In order to state it, we have to go back to the tree language. 
Let $\TTT$ be the dyadic tree associated with the disc $\DD$. By Remark \ref{addsomepoints},
we can assume that each box $\alpha$ in $\TTT$ contains at most one point from the
candidate interpolating sequence $S$ in $\DD$. We can therefore identify points in $S$
with distinguished boxes in $\TTT$.
We say that $S$ in $\DD$ satisfies the \it weak simple condition \rm if
for all $\alpha$ in $\TTT$,
\begin{equation}
\label{weaksimple}
\sum_{\substack{\beta\in S,\text{ }\beta\geq\alpha\\\mu_S\left(  \gamma\right)
=0\text{ for }\alpha<\gamma<\beta}}\mu_S\left(  \beta\right)  \leq Cd\left(
\alpha\right)  ^{-1}. 
\end{equation}
\begin{theorem}[Arcozzi, Rochberg and Sawyer \cite{ARS5}]\label{intweaksimple}
Let $S$ be a sequence in $\DD$ and suppose that $\mu_S(\DD)<\infty$. If $\mu_S$ satisfies
\textnormal{\hyperlink{Sep}{(Sep)}} and the weak simple condition \eqref{weaksimple}, then $S$ is onto interpolating for $\DDD$.
\end{theorem}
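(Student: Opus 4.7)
The plan is to reduce to weak interpolation via Theorem \ref{weaklybishop}: it suffices to construct, for each $s_0 \in S$, a function $f_{s_0} \in \DDD$ with $f_{s_0}(s) = \delta_{s,s_0}$ for every $s \in S$ and $\|f_{s_0}\|_\DDD^2 \lesssim d(s_0)^{-1}$. Bishop's theorem then upgrades such weakly interpolating functions to the onto interpolation with norm control asserted in the statement.

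The construction will be a B\"oe-type recursive scheme, adapted so that only the one-step bound \eqref{weaksimple} is required. Identify each $s \in S$ with its dyadic Whitney box $\alpha(s) \in \TTT$ and let $\{b_\alpha\}$ denote B\"oe's holomorphic building blocks from \cite{MaSu,Bo1}: $b_\alpha(z(\alpha)) = 1$, $b_\alpha$ is essentially concentrated on an enlarged Carleson box around $\alpha$, and $\|b_\alpha\|_\DDD^2 \lesssim d(\alpha)^{-1}$. Organize $S \setminus \{s_0\}$ into generations: $\Gamma_n$ consists of those $t \in S$ whose $\TTT$-geodesic to $s_0$ crosses exactly $n-1$ other points of $S$. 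Initialize $f^{(0)} := b_{\alpha(s_0)}$ and recursively set
\[
f^{(n)} = f^{(n-1)} - \sum_{t \in \Gamma_n} f^{(n-1)}(t)\, \tilde{b}_{\alpha(t)},
\]
where $\tilde{b}_{\alpha(t)}$ is a further localized variant of $b_{\alpha(t)}$ that equals $1$ at $t$ and is essentially supported in the $\TTT$-cone below $\alpha(t)$, so that previously matched values are not disturbed. Set $f_{s_0} = \lim_n f^{(n)}$ along a subsequence.

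The key per-generation estimate uses \eqref{weaksimple} applied to the two $\TTT$-children of $\alpha(s)$: if $s \in \Gamma_n$, its $\Gamma_{n+1}$-descendants $\{t_j\}$ satisfy
\[
\sum_j d(t_j)^{-1} = \sum_j \mu_S(\alpha(t_j)) \lesssim d(s)^{-1}.
\]
Combined with $\|\tilde{b}_{\alpha(t_j)}\|_\DDD^2 \lesssim d(t_j)^{-1}$, the correction associated to the descendants of $s$ contributes at most $C\,|f^{(n-1)}(s)|^2\, d(s)^{-1}$ to the squared Dirichlet norm. The remaining task is to bound the accumulated sum $\sum_n \sum_{s \in \Gamma_n} |f^{(n-1)}(s)|^2\, d(s)^{-1}$ by $d(s_0)^{-1}$.

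This accumulation is the principal obstacle: absent the full Carleson measure hypothesis, the sum does not close by itself. Two features of \textnormal{(Sep)} in its hyperbolic form $\max\{d(s),d(t)\} \leq A\, d(s,t) + B$ rescue the argument. First, the large hyperbolic separation between consecutive $S$-generations forces each factor $|f^{(n-1)}(t)| = |\tilde{b}_{\alpha(\mathrm{parent})}(t)|$ to be geometrically small (the $\tilde{b}$'s being suitably damped outside the Carleson box at their own scale), so the products $|f^{(n)}(s)|^2$ shrink rapidly down each branch. Second, the Dirichlet inner products $\langle \tilde{b}_{\alpha(s)}, \tilde{b}_{\alpha(t)}\rangle_\DDD$ between well-separated correction terms are small because of approximate support disjointness, giving quasi-orthogonality across generations. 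Balancing the localization parameters of the $\tilde{b}$'s against these two effects is the technical heart of the argument; once achieved, the double sum collapses to $d(s_0)^{-1}$ times a convergent geometric series, while the finiteness hypothesis $\mu_S(\DD) < \infty$ handles the remaining tail and guarantees that $f^{(n)}$ converges in $\DDD$.
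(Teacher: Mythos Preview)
The paper is a survey and does not actually contain a proof of this theorem; it merely states the result and attributes it to \cite{ARS5}. The only hint in the surrounding text is that the companion Theorem~\ref{ontoarsone} ``relies on a modification of B\"oe's recursive scheme, using B\"oe's functions.'' There is therefore nothing in the paper to compare your argument against line by line.

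That said, your outline is entirely consistent with this hint and with the standard architecture: the reduction to weak interpolation via Theorem~\ref{weaklybishop}, the generation-by-generation correction using B\"oe's holomorphic building blocks from \cite{MaSu,Bo1}, and the use of \eqref{weaksimple} as a one-step control on the immediate $S$-descendants of each $s\in S$ are all the expected ingredients.

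Where the proposal remains a plan rather than a proof is precisely at the point you flag as ``the principal obstacle.'' Iterating the one-step bound \eqref{weaksimple} gives only $\sum_{t\in\Gamma_n} d(t)^{-1}\le C^n d(s_0)^{-1}$, which diverges; the geometric decay of the coefficients $|f^{(n-1)}(t)|$ must therefore beat the constant $C$, and you do not explain how the localization parameter in the $\tilde b_\alpha$'s is chosen to guarantee this while keeping $\tilde b_\alpha(t)$ close enough to $1$ on the next generation for the recursion not to drift. Likewise, the role of $\mu_S(\DD)<\infty$ is only gestured at (``handles the remaining tail''); in the actual argument it is what lets one sum across the full tree once the per-branch decay has been secured. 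These are not side issues but the substance of the result, so the proposal would need real quantitative work at these two places before it counts as a proof.
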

We observe that the weak simple condition can not be necessary for onto interpolation. 
In fact, it is easy to
produce examples of sequences $S$ satisfying \eqref{weaksimple}, having subsequences
$S^\prime$ (which are then onto interpolating for $\DDD$) for which 
\eqref{weaksimple} is not satisfied. Such examples, however, have $\mu_S(\DD)=+\infty$. We 
do not know whether, under the assumptions $\mu_S(\DD)=+\infty$ and \hyperlink{Sep}{(Sep)}, the weak simple 
condition is necessary for onto interpolation.

In terms of partial results about onto interpolation, let us mention a necessary condition of 
capacitary type. If $S$ an onto interpolating sequence for $\DDD$ in $\DD$, which we might identify with a 
subsequence of the tree $\TTT$, then the \it discrete capacitary condition holds\rm:  to each $s_0$ in $S$, there corresponds a positive function $\varphi_{s_0}$ on $\TTT$ such that $\|\varphi_{s_0}\|_{\ell^2}^2\le Cd(s_0)^{-1}$ and $\varphi_{s_0}(s)=\delta_{s_0}(s)$ whenever $s\in S$.

\smallskip

A proof of this fact easily follows from Proposition \ref{restriction}.
The discrete capacitary condition can be stated in terms of discrete condenser capacities:
$$
\capacity_\TTT(S\setminus\{s_0\},\{s_0\})\le Cd(s_0)^{-1}.
$$
A reasonable conjecture is that the discrete capacitary condition, plus the separation 
condition \hyperlink{Sep}{(Sep)}, are necessary and sufficient in order for $S$ to be onto interpolating.
Other material on the problem of the onto interpolating sequences is in \cite{ARS5}.

\subsection{Zero sets}\label{zeroincondotta} We briefly mention, because related to the interpolating sequences, the zero sets for Dirichlet functions. A sequence of points $Z=\{z_n:\ n\ge0\}$ in $\DD$ is a \it zero set \rm for $\DDD$ if there is a nonvanishing function $f$ in $\DDD$ such that $f(z_n)=0$. By conformal invariance, we might also require $f(0)=1$. In \cite{SS2} Shapiro and Shields, improving on a Theorem of Carleson \cite{Car1}, proved that if
\begin{equation}\label{zerosuzero}
\sum_n\frac{1}{\log\frac{1}{1-|z_n|}}<\infty,
\end{equation}
then $Z$ is a zero set for the Dirichlet spaces. This condition, shown in \cite{NRS}, is sharp among conditions which only depend on the distance from the origin; but it does not characterize the zero sets. We direct the interested reader to \cite{Ro} for more information on zero sets.
\section{Some open problems.}\label{aperto} We conclude this survey with some open problems strictly related to the topics we have discussed.

Since the Dirichlet space is conformally invariant, it would be interesting to have conformally invariant counterparts of definitions and theorems concerning the Dirichlet space, in which the origin plays a privileged r\^ole.  A natural conformally invariant \it definition \rm of the Carleson measure norm for a measure $\mu$ on $\overline{\DD}$ is
$$
[\mu]_{CM_{inv}(\DDD)}:=\sup\frac{\int_{\overline{\DD}}|f-\mu(f)|^2d\mu }{\|f\|_{\DDD,*}^2},
$$
where $\mu(f)=\int_{\overline{\DD}}fd\mu$. Given a M\"obius map of the disc $\varphi$, let $\varphi_*\mu$, defined as $\varphi_*\mu(E)=\mu(\varphi^{-1}(E))$, be the push forward measure. It is easily verified that 
$[\varphi_*\mu]_{CM_{inv}(\DDD)}=[\mu]_{CM_{inv}(\DDD)}$. 
\begin{problem} Give a quantitative, geometric characterization of $[\mu]_{CM_{inv}(\DDD)}$.
\end{problem}
Let $[\mu]_{CM(\DDD)}$ be the best constant in the Carleson imbedding inequality \eqref{carleson}.
It is proved in \cite{ARS1} that (if $\mu(\partial \DD)=0$) then $[\mu]_{CM_{inv}(\DDD)}$ is finite if and only if $[\mu]_{CM(\DDD)}$ is finite. The proof there, however, is by contradiction and does not seem to give quantitative information. Also note that the case of the point mass $\mu=\delta_0$ shows that $[\mu]_{CM(\DDD)}$ and $[\mu]_{CM_{inv}(\DDD)}$ are not equivalent in general.
It would also be interesting to have a conformally invariant definition and geometric characterization of the interpolating sequences for the Dirichlet space. 

The circle of ideas revolving around Nehari's Theorem and duality  is now established for the Hardy space $H^2$ and for the Dirichlet space $\DDD$. Similar, deep results have been obtained for spaces which are not holomorphic spaces on the unit disc. 
\begin{problem}
Does the same theory hold for the weighted Dirichlet spaces sitting between Hardy and Dirichlet?
\end{problem}
The weighted Dirichlet spaces we are referring to are those semi-normed by
$$
\|f\|^2_{\DDD_a,*}=\int_\DD|f^\prime|^2d(1-|z|^2)^aA(z),
$$
where $0< a< 1$: $\DDD_0=\DDD$ and $\DDD_1=H^2$. The techniques in \cite{ARSW1} can not be directly applied to the weighted case.

About interpolating sequences, the following problem is still open:
\begin{problem}
Give a geometric characterization of the onto-interpolating sequences for the Dirichlet space.
\end{problem}
Some results in \cite{ARS5} seem to imply that, in order to solve this problem, one has to depart from B\"oe's constructive techniques \cite{Bo1}.
Onto interpolation is related to the following old problem.
\begin{problem}
Characterize the zero sets for $\DDD$.
\end{problem}

The interpretation of the Dirichlet norm in terms of area of the image provides a natural, conformal invariant definition of the Dirichlet norm on any planar domain.
\begin{problem}
Develop a theory of Dirichlet spaces on planar domains. 
\end{problem}
Of special interest, in view of potential applications to condenser capacities, would be a theory of  Dirichlet spaces on annuli.

\begin{bibdiv}
\begin{biblist}
\normalsize
\bib{Adams}{article}{
   author={Adams, David R.},
   title={On the existence of capacitary strong type estimates in $R^{n}$},
   journal={Ark. Mat.},
   volume={14},
   date={1976},
   number={1},
   pages={125--140}
}

\bib{AgMc2}{book}{
   author={Agler, Jim},
   author={McCarthy, John E.},
   title={Pick interpolation and Hilbert function spaces},
   series={Graduate Studies in Mathematics},
   volume={44},
   publisher={American Mathematical Society},
   place={Providence, RI},
   date={2002},
   pages={xx+308}
}
		
\bib{AgMc1}{article}{
   author={Agler, Jim},
   author={McCarthy, John E.},
   title={Complete Nevanlinna-Pick kernels},
   journal={J. Funct. Anal.},
   volume={175},
   date={2000},
   number={1},
   pages={111--124}
}

\bib{AP}{article}{
   author={Aleksandrov, A. B.},
   author={Peller, V. V.},
   title={Hankel operators and similarity to a contraction},
   journal={Internat. Math. Res. Notices},
   date={1996},
   number={6},
   pages={263--275}
}

\bib{AT}{article}{
   author={Ambrozie, C{\u{a}}lin-Grigore},
   author={Timotin, Dan},
   title={On an intertwining lifting theorem for certain reproducing kernel
   Hilbert spaces},
   journal={Integral Equations Operator Theory},
   volume={42},
   date={2002},
   number={4},
   pages={373--384}
}
		
\bib{AF}{article}{
   author={Arazy, J.},
   author={Fisher, S. D.},
   title={The uniqueness of the Dirichlet space among M\"obius-invariant
   Hilbert spaces},
   journal={Illinois J. Math.},
   volume={29},
   date={1985},
   number={3},
   pages={449--462}
}

\bib{AFP}{article}{
   author={Arazy, J.},
   author={Fisher, S. D.},
   author={Peetre, J.},
   title={M\"obius invariant function spaces},
   journal={J. Reine Angew. Math.},
   volume={363},
   date={1985},
   pages={110--145}
}

\bib{A}{article}{
   author={Arcozzi, Nicola},
   title={Carleson measures for analytic Besov spaces: the upper triangle
   case},
   journal={JIPAM. J. Inequal. Pure Appl. Math.},
   volume={6},
   date={2005},
   number={1},
   pages={Article 13, 15 pp. (electronic)}
}

\bib{AR}{article}{
   author={Arcozzi, Nicola},
   author={Rochberg, Richard},
   title={Topics in dyadic Dirichlet spaces},
   journal={New York J. Math.},
   volume={10},
   date={2004},
   pages={45--67 (electronic)}
}

\bib{ARS2}{article}{
   author={Arcozzi, N.},
   author={Rochberg, R.},
   author={Sawyer, E.},
   title={Carleson measures for the Drury-Arveson Hardy space and other
   Besov-Sobolev spaces on complex balls},
   journal={Adv. Math.},
   volume={218},
   date={2008},
   number={4},
   pages={1107--1180}
}

\bib{ARS4}{article}{
   author={Arcozzi, N.},
   author={Rochberg, R.},
   author={Sawyer, E.},
   title={Capacity, Carleson measures, boundary convergence, and exceptional
   sets},
   conference={
      title={Perspectives in partial differential equations, harmonic
      analysis and applications},
   },
   book={
      series={Proc. Sympos. Pure Math.},
      volume={79},
      publisher={Amer. Math. Soc.},
      place={Providence, RI},
   },
   date={2008},
   pages={1--20}
}

\bib{ARS5}{article}{
   author={Arcozzi, N.},
   author={Rochberg, R.},
   author={Sawyer, E.},
   title={Onto interpolating sequences for the Dirichlet space},
   pages={preprint},
   eprint={http://amsacta.cib.unibo.it/2480/}
}

\bib{ARS3}{article}{
   author={Arcozzi, Nicola},
   author={Rochberg, Richard},
   author={Sawyer, Eric},
   title={The characterization of the Carleson measures for analytic Besov
   spaces: a simple proof},
   conference={
      title={Complex and harmonic analysis},
   },
   book={
      publisher={DEStech Publ., Inc., Lancaster, PA},
   },
   date={2007},
   pages={167--177}
}

\bib{ARS6}{article}{
   author={Arcozzi, N.},
   author={Rochberg, R.},
   author={Sawyer, E.},
   title={Carleson measures and interpolating sequences for Besov spaces on
   complex balls},
   journal={Mem. Amer. Math. Soc.},
   volume={182},
   date={2006},
   number={859},
   pages={vi+163}
}

\bib{ARS1}{article}{
   author={Arcozzi, Nicola},
   author={Rochberg, Richard},
   author={Sawyer, Eric},
   title={Carleson measures for analytic Besov spaces},
   journal={Rev. Mat. Iberoamericana},
   volume={18},
   date={2002},
   number={2},
   pages={443--510}
}

\bib{ARSW1}{article}{
   author={Arcozzi, Nicola},
   author={Rochberg, Richard},
   author={Sawyer, Eric},
   author={Wick, Brett D.},
   title={Bilinear forms on the Dirichlet space},
   journal={Anal. \& PDE},
   volume={3},
   date={2010},
   number={1},
   pages={21--47}
}

\bib{ARSW2}{article}{
   author={Arcozzi, Nicola},
   author={Rochberg, Richard},
   author={Sawyer, Eric},
   author={Wick, Brett D.},
   title={Function spaces related to the Dirichlet space},
   journal={J. London Math. Soc.},
   eprint={http://arxiv.org/abs/0812.3422}
}

\bib{ARSW3}{article}{
   author={Arcozzi, Nicola},
   author={Rochberg, Richard},
   author={Sawyer, Eric},
   author={Wick, Brett D.},
   title={Potential Theory on Trees, Graphs and Ahlfors regular metric spaces},
   pages={preprint}
}

\bib{BTV}{article}{
   author={Ball, Joseph A.},
   author={Trent, Tavan T.},
   author={Vinnikov, Victor},
   title={Interpolation and commutant lifting for multipliers on reproducing
   kernel Hilbert spaces},
   conference={
      title={Operator theory and analysis},
      address={Amsterdam},
      date={1997},
   },
   book={
      series={Oper. Theory Adv. Appl.},
      volume={122},
      publisher={Birkh\"auser},
      place={Basel},
   },
   date={2001},
   pages={89--138}
}

\bib{Beu}{article}{
   author={Beurling, Arne},
   title={Ensembles exceptionnels},
   language={French},
   journal={Acta Math.},
   volume={72},
   date={1940},
   pages={1--13}
}

\bib{Bi}{article}{
   author={Bishop, C. J.},
   title={Interpolating sequences for the Dirichlet space and its multipliers},
   pages={preprint},
   eprint={http://www.math.sunysb.edu/~bishop/papers/mult.pdf}
}

\bib{BePe}{article}{
   author={Benjamini, Itai},
   author={Peres, Yuval},
   title={A correlation inequality for tree-indexed Markov chains},
   conference={
      title={Seminar on Stochastic Processes, 1991},
      address={Los Angeles, CA},
      date={1991},
   },
   book={
      series={Progr. Probab.},
      volume={29},
      publisher={Birkh\"auser Boston},
      place={Boston, MA},
   },
   date={1992},
   pages={7--14}
}

\bib{BlPa}{article}{
   author={Blasi, Daniel},
   author={Pau, Jordi},
   title={A characterization of Besov-type spaces and applications to
   Hankel-type operators},
   journal={Michigan Math. J.},
   volume={56},
   date={2008},
   number={2},
   pages={401--417}
}

\bib{Bo2}{article}{
   author={B{\"o}e, Bjarte},
   title={A norm on the holomorphic Besov space},
   journal={Proc. Amer. Math. Soc.},
   volume={131},
   date={2003},
   number={1},
   pages={235--241 (electronic)}
}
		
\bib{Bo1}{article}{
   author={B{\"o}e, Bjarte},
   title={Interpolating sequences for Besov spaces},
   journal={J. Funct. Anal.},
   volume={192},
   date={2002},
   number={2},
   pages={319--341}
}

\bib{Car4}{article}{
   author={Carleson, Lennart},
   title={Interpolations by bounded analytic functions and the corona
   problem},
   journal={Ann. of Math. (2)},
   volume={76},
   date={1962},
   pages={547--559}
}
		
\bib{Car3}{article}{
   author={Carleson, Lennart},
   title={A representation formula for the Dirichlet integral},
   journal={Math. Z.},
   volume={73},
   date={1960},
   pages={190--196}
}
		
\bib{Car2}{article}{
   author={Carleson, Lennart},
   title={An interpolation problem for bounded analytic functions},
   journal={Amer. J. Math.},
   volume={80},
   date={1958},
   pages={921--930}
}
		
\bib{Car1}{article}{
   author={Carleson, Lennart},
   title={On the zeros of functions with bounded Dirichlet integrals},
   journal={Math. Z.},
   volume={56},
   date={1952},
   pages={289--295}
}

\bib{CoMu}{article}{
   author={Coifman, Ronald R.},
   author={Murai, Takafumi},
   title={Commutators on the potential-theoretic energy spaces},
   journal={Tohoku Math. J. (2)},
   volume={40},
   date={1988},
   number={3},
   pages={397--407}
}
		
\bib{CSW}{article}{
   author={Costea, \c{S}erban},
   author={Sawyer, Eric T.},
   author={Wick, Brett D.},
   title={The Corona Theorem for the Drury-Arveson Hardy space and other holomorphic Besov-Sobolev spaces on the unit ball in $\mathbb{C}^{n}$},
   journal={Anal. \& PDE},
   pages={to appear},
   eprint={http://arxiv.org/abs/0811.0627}
}

\bib{HW}{article}{
   author={Hedberg, L. I.},
   author={Wolff, Th. H.},
   title={Thin sets in nonlinear potential theory},
   journal={Ann. Inst. Fourier (Grenoble)},
   volume={33},
   date={1983},
   number={4},
   pages={161--187}
}

\bib{J}{article}{
   author={Jones, Peter W.},
   title={$L^{\infty }$ estimates for the $\bar \partial $ problem in a
   half-plane},
   journal={Acta Math.},
   volume={150},
   date={1983},
   number={1-2},
   pages={137--152}
}

\bib{KaVe}{article}{
   author={Kalton, N. J.},
   author={Verbitsky, I. E.},
   title={Nonlinear equations and weighted norm inequalities},
   journal={Trans. Amer. Math. Soc.},
   volume={351},
   date={1999},
   number={9},
   pages={3441--3497}
}
		
\bib{KS1}{article}{
   author={Kerman, Ron},
   author={Sawyer, Eric},
   title={Carleson measures and multipliers of Dirichlet-type spaces},
   journal={Trans. Amer. Math. Soc.},
   volume={309},
   date={1988},
   number={1},
   pages={87--98}
}

\bib{KS2}{article}{
   author={Kerman, R.},
   author={Sawyer, E.},
   title={The trace inequality and eigenvalue estimates for Schr\"odinger
   operators},
   language={English, with French summary},
   journal={Ann. Inst. Fourier (Grenoble)},
   volume={36},
   date={1986},
   number={4},
   pages={207--228}
}

\bib{MaSu}{article}{
   author={Marshall, D.},
   author={Sundberg, C.},
   title={Interpolating sequences for the multipliers of the Dirichlet space},
   pages={preprint},
   eprint={http://www.math.washington.edu/~marshall/preprints/interp.pdf}
}

\bib{Maz}{article}{
   author={Maz{\cprime}ja, V. G.},
   title={Certain integral inequalities for functions of several variables},
   language={Russian},
   conference={
      title={Problems of mathematical analysis, No. 3: Integral and
      differential operators, Differential equations (Russian)},
   },
   book={
      publisher={Izdat. Leningrad. Univ., Leningrad},
   },
   date={1972},
   pages={33--68}
}

\bib{MW}{article}{
   author={Muckenhoupt, Benjamin},
   author={Wheeden, Richard},
   title={Weighted norm inequalities for fractional integrals},
   journal={Trans. Amer. Math. Soc.},
   volume={192},
   date={1974},
   pages={261--274}
}

\bib{NaSul}{article}{
   author={Nag, Subhashis},
   author={Sullivan, Dennis},
   title={Teichm\"uller theory and the universal period mapping via quantum
   calculus and the $H^{1/2}$ space on the circle},
   journal={Osaka J. Math.},
   volume={32},
   date={1995},
   number={1},
   pages={1--34}
}

\bib{NRS}{article}{
   author={Nagel, Alexander},
   author={Rudin, Walter},
   author={Shapiro, Joel H.},
   title={Tangential boundary behavior of functions in Dirichlet-type
   spaces},
   journal={Ann. of Math. (2)},
   volume={116},
   date={1982},
   number={2},
   pages={331--360}
}

\bib{RiSu}{article}{
   author={Richter, Stefan},
   author={Sundberg, Carl},
   title={A formula for the local Dirichlet integral},
   journal={Michigan Math. J.},
   volume={38},
   date={1991},
   number={3},
   pages={355--379}
}

\bib{RW}{article}{
   author={Rochberg, Richard},
   author={Wu, Zhi Jian},
   title={A new characterization of Dirichlet type spaces and applications},
   journal={Illinois J. Math.},
   volume={37},
   date={1993},
   number={1},
   pages={101--122}
}

\bib{Ro}{article}{
   author={Ross, William T.},
   title={The classical Dirichlet space},
   conference={
      title={Recent advances in operator-related function theory},
   },
   book={
      series={Contemp. Math.},
      volume={393},
      publisher={Amer. Math. Soc.},
      place={Providence, RI},
   },
   date={2006},
   pages={171--197}
}

\bib{Sar}{article}{
   author={Sarason, Donald},
   title={Holomorphic spaces: a brief and selective survey},
   conference={
      title={Holomorphic spaces},
      address={Berkeley, CA},
      date={1995},
   },
   book={
      series={Math. Sci. Res. Inst. Publ.},
      volume={33},
      publisher={Cambridge Univ. Press},
      place={Cambridge},
   },
   date={1998},
   pages={1--34}
}

\bib{Saw}{article}{
   author={Sawyer, Stanley A.},
   title={Martin boundaries and random walks},
   conference={
      title={Harmonic functions on trees and buildings},
      address={New York},
      date={1995},
   },
   book={
      series={Contemp. Math.},
      volume={206},
      publisher={Amer. Math. Soc.},
      place={Providence, RI},
   },
   date={1997},
   pages={17--44}
}

\bib{Seip}{book}{
   author={Seip, Kristian},
   title={Interpolation and sampling in spaces of analytic functions},
   series={University Lecture Series},
   volume={33},
   publisher={American Mathematical Society},
   place={Providence, RI},
   date={2004},
   pages={xii+139}
}

\bib{SS2}{article}{
   author={Shapiro, H. S.},
   author={Shields, A. L.},
   title={On the zeros of functions with finite Dirichlet integral and some
   related function spaces},
   journal={Math. Z.},
   volume={80},
   date={1962},
   pages={217--229}
}

\bib{SS1}{article}{
   author={Shapiro, H. S.},
   author={Shields, A. L.},
   title={On some interpolation problems for analytic functions},
   journal={Amer. J. Math.},
   volume={83},
   date={1961},
   pages={513--532}
}
		
\bib{Ste}{article}{
   author={Stegenga, David A.},
   title={Multipliers of the Dirichlet space},
   journal={Illinois J. Math.},
   volume={24},
   date={1980},
   number={1},
   pages={113--139}
}

\bib{Stein}{book}{
   author={Stein, Elias M.},
   title={Singular integrals and differentiability properties of functions},
   series={Princeton Mathematical Series, No. 30},
   publisher={Princeton University Press},
   place={Princeton, N.J.},
   date={1970},
   pages={xiv+290}
}

\bib{Tch}{article}{
   author={Tchoundja, Edgar},
   title={Carleson measures for the generalized Bergman spaces via a
   $T(1)$-type theorem},
   journal={Ark. Mat.},
   volume={46},
   date={2008},
   number={2},
   pages={377--406}
}

\bib{Tolo}{article}{
   author={Tolokonnikov, V. A.},
   title={Carleson's Blaschke products and Douglas algebras},
   language={Russian},
   journal={Algebra i Analiz},
   volume={3},
   date={1991},
   number={4},
   pages={186--197},
   issn={0234-0852},
   translation={
      journal={St. Petersburg Math. J.},
      volume={3},
      date={1992},
      number={4},
      pages={881--892},
      issn={1061-0022},
   }
}

\bib{TV}{article}{
   author={Treil, Sergei},
   author={Volberg, Alexander},
   title={A fixed point approach to Nehari's problem and its applications},
   conference={
      title={Toeplitz operators and related topics},
      address={Santa Cruz, CA},
      date={1992},
   },
   book={
      series={Oper. Theory Adv. Appl.},
      volume={71},
      publisher={Birkh\"auser},
      place={Basel},
   },
   date={1994},
   pages={165--186}
}

\bib{Trent}{article}{
   author={Trent, Tavan T.},
   title={A corona theorem for multipliers on Dirichlet space},
   journal={Integral Equations Operator Theory},
   volume={49},
   date={2004},
   number={1},
   pages={123--139}
}

\bib{Tw}{article}{
   author={Twomey, J. B.},
   title={Tangential boundary behaviour of harmonic and holomorphic
   functions},
   journal={J. London Math. Soc. (2)},
   volume={65},
   date={2002},
   number={1},
   pages={68--84}
}

\bib{V}{article}{
   author={Verbitski{\u\i}, I. {\`E}.},
   title={Multipliers in spaces with ``fractional'' norms, and inner
   functions},
   language={Russian},
   journal={Sibirsk. Mat. Zh.},
   volume={26},
   date={1985},
   number={2},
   pages={51--72, 221}
}

\bib{VoWi}{article}{
   author={Volberg, Alexander},
   author={Wick, Brett D.},
   title={Bergman-type Singular Operators and the Characterization of Carleson Measures for Besov--Sobolev Spaces on the Complex Ball},
   journal={Amer. J. Math.},
   pages={to appear},
   eprint={http://arxiv.org/abs/0910.1142}
}

\bib{Wu}{article}{
   author={Wu, Zhijian},
   title={Function theory and operator theory on the Dirichlet space},
   conference={
      title={Holomorphic spaces},
      address={Berkeley, CA},
      date={1995},
   },
   book={
      series={Math. Sci. Res. Inst. Publ.},
      volume={33},
      publisher={Cambridge Univ. Press},
      place={Cambridge},
   },
   date={1998},
   pages={179--199}
}
 
\bib{W1}{article}{
   author={Wu, Zhijian},
   title={The predual and second predual of $W_\alpha$},
   journal={J. Funct. Anal.},
   volume={116},
   date={1993},
   number={2},
   pages={314--334}
}

\bib{Xia}{article}{
   author={Xiao, Jie},
   title={The $\overline\partial$-problem for multipliers of the Sobolev
   space},
   journal={Manuscripta Math.},
   volume={97},
   date={1998},
   number={2},
   pages={217--232}
}

\bib{Zhu}{book}{
   author={Zhu, Kehe},
   title={Operator theory in function spaces},
   series={Mathematical Surveys and Monographs},
   volume={138},
   edition={2},
   publisher={American Mathematical Society},
   place={Providence, RI},
   date={2007},
   pages={xvi+348}
}

\end{biblist}
\end{bibdiv}

\end{document}